\newtheorem{theorem}{Theorem}[section]
\newtheorem{proposition}[theorem]{Proposition}
\newtheorem{corollary}[theorem]{Corollary}
\newtheorem{lemma}[theorem]{Lemma}
\theoremstyle{remark}
\newtheorem{notation}[theorem]{Notation}
\newtheorem{construction}[theorem]{Construction}
\newtheorem{definition}[theorem]{Definition}
\newtheorem{remark}[theorem]{Remark}
\newtheorem{example}[theorem]{Example}
\newcommand{\cB}{ \mathcal{B} }
\newcommand{\bB}{ \mathbb{B} }
\newcommand{\bC}{ \mathbb{C} }
\newcommand{\bE}{ \mathbb{E} }
\newcommand{\cF}{ \mathcal{F} }
\newcommand{\cH}{ \mathcal{H} }
\newcommand{\cM}{ \mathcal{M} }
\newcommand{\bR}{ \mathbb{R} }
\newcommand{\cX}{ \mathcal{X} }
\newcommand{\bZ}{ \mathbb{Z} }
\newcommand{\fV}{ \mathfrak{V} }
\newcommand{\cCP}{\mathcal{CP}}
\newcommand{\term}{ \mbox{term} }
\newcommand{\Int}{ \mbox{Int} }
\newcommand{\oo}{ \mbox{o} }
\newcommand{\Dist}{ \Sigma_{alg} }
\newcommand{\Moeb}{ \mbox{Moeb} }
\newcommand{\Reta}{ \widehat{\mbox{RB}} }
\newcommand{\Serie}{ \mbox{Ser} }
\newcommand{\RtoM}{ \widehat{\mbox{RM}} }
\newcommand{\EtatoM}{ \widehat{\mbox{BM}} }
\newcommand{\ip}[2]{\left \langle{#1},{#2} \right \rangle}
\newcommand{\set}[1]{\left\{#1\right\}}
\newcommand{\abs}[1]{\left|{#1}\right|}
\newcommand{\norm}[1]{\left\|{#1}\right\|}
\newcommand{\iB}{\mathrm{1}}
\begin{document}

\title{Convolution powers in the  operator-valued framework}
\author{Michael Anshelevich}
\thanks{M.A. was supported in part by NSF grant DMS-0900935.
S.T.B. was supported in part by a Discovery Grant from NSERC,
Canada, and by a University of Saskatchewan start-up grant.
M.F. was supported in part by grant ANR-08-BLAN-0311-03 from
Agence Nationale de la Recherche, France.
A.N. was supported in part by a Discovery Grant from NSERC, 
Canada.}
\address{M. Anshelevich: 
Department of Mathematics, Texas A\&M University, 
College Station, TX 77843-3368, U.S.A.}
\email{manshel@math.tamu.edu}

\author{Serban T. Belinschi}
\address{S.T. Belinschi: Department of Mathematics and 
Statistics, University of Saskatchewan, 106 Wiggins Road, Saskatoon,
Saskatchewan, S7N 5E6, Canada, and
\newline
Institute of Mathematics ``Simion Stoilow'' of the Romanian 
Academy.}
\email{belinsch@math.usask.ca}

\author{Maxime Fevrier}
\address{M. Fevrier: Institut de Math\'ematiques de Toulouse, 
Equipe de Statistique et Probabilit\'es, F-31062 Toulouse
Cedex 09, France.}
\email{fevrier@math.univ-toulouse.fr}

\author{Alexandru Nica}
\address{A. Nica: 
Department of Pure Mathematics, University of Waterloo, 
Waterloo, Ontario, N2L 3G1, Canada.}
\email{anica@math.uwaterloo.ca}

\subjclass[2000]{Primary 46L54.}  

\date{July 2011}

\begin{abstract}
We consider the framework of an operator-valued noncommutative
probability space over a unital $C^*$-algebra $\cB$. We show how for a
$\cB$-valued distribution $\mu$ one can define convolution powers
$\mu^{\boxplus \eta}$ (with respect to free additive convolution)
and $\mu^{\uplus \eta}$ (with respect to Boolean convolution),
where the exponent $\eta$ is a suitably chosen linear map from $\cB$
to $\cB$, instead of being a non-negative real number. More precisely,
$\mu^{\uplus \eta}$ is always defined when $\eta$ is completely
positive, while $\mu^{\boxplus \eta}$ is always defined when
$\eta - 1$ is completely positive (with ``$1$'' denoting the
identity map on $\cB$).

In connection to these convolution powers we define an evolution
semigroup $\{ \bB_{\eta} \mid \eta : \cB \to \cB$, completely
positive$\}$, related to the Boolean Bercovici-Pata bijection. We
prove several properties of this semigroup, including its connection
to the $\cB$-valued free Brownian motion.

We also obtain two results on the operator-valued analytic function
theory related to convolution powers $\mu^{\boxplus \eta}$. One of
the results concerns the analytic subordination
of the Cauchy-Stieltjes transform of $\mu^{\boxplus \eta}$ with
respect to the Cauchy-Stieltjes transform of $\mu$. The other one
gives a $\cB$-valued version of the inviscid Burgers equation, which
is satisfied by the Cauchy-Stieltjes transform of a $\cB$-valued
free Brownian motion.
\end{abstract}

\maketitle

$\ $

\section{Introduction}

\subsection{Convolution powers with respect to
\boldmath{$\boxplus$} }

The study of free probability was initiated by Voiculescu in the
early 1980s, and combinatorial methods introduced to it by Speicher
in the early 1990s. Soon thereafter both the analytic and the
combinatorial aspects of the theory were extended (in \cite{V1995} and
respectively \cite{S1998}) to an operator-valued framework.
Very roughly, the operator-valued framework is analogous to
conditional probability: instead of working with an expectation
functional (for noncommutative random variables) which takes values
in $\bC$, one works with a conditional expectation taking values in
an algebra $\cB$. The $\cB$-valued framework adds further depth to the
theory; a notable example of this appears for instance in the
relation between free probability and random matrices, where the
paper of Shlyakhtenko \cite{Sh1996} found relations between
operator-valued free probability and random band matrices.

An important role in free probability is played
by the free additive convolution $\boxplus$. This is an operation
on distributions which reflects the operation of addition for
free random variables. When considered in connection to bounded
selfadjoint variables in $\bC$-valued framework, $\boxplus$ is an
operation on compactly supported probability measures on $\bR$, and
was studied from the very beginning of the theory \cite{V1985,V1986}.
Also from the very beginning, Voiculescu \cite{V1986} introduced the
concept of $R$-transform $R_{\mu}$ for such a probability measure
$\mu$, and proved the linearization property that
$R_{\mu \boxplus \nu} = R_{\mu} + R_{\nu}$.
By using the $R$-transform one can moreover introduce convolution
powers with respect to $\boxplus$: for a compactly supported
probability measure $\mu$ and a real number $t \geq 0$, the
convolution power $\mu^{\boxplus t}$ (when it exists) is the
probability measure determined uniquely by the fact that
\begin{equation}   \label{eqn:1.1}
R_{\mu^{\boxplus t}} = t \cdot R_{\mu}.
\end{equation}

It is known \cite{NS1996} that $\mu^{\boxplus t}$ is always defined
when $t \geq 1$. On the other hand there exist special distributions
(the so-called infinitely divisible ones) where the ``$\boxplus t$''
powers are defined for every $t \geq 0$. An important such example
is provided by the standard semicircular distribution $\gamma$,
which is the analog in free probability for the normal law. If we
denote by $\gamma_t$ the centered semicircular distribution with
variance $t$ (so that $\gamma$ becomes $\gamma_1$) then, in full
analogy to the heat semigroup, these form a semigroup:
$\gamma_t \boxplus \gamma_s = \gamma_{t+s}$ for every $s,t \geq 0$.
In other words, one has that
\begin{equation}  \label{eqn:1.2}
\gamma_t = \gamma^{\boxplus t}, \ \ \forall \,  t \geq 0.
\end{equation}

Now let us move to operator-valued framework. Throughout the paper,
$\cB$ will be a fixed unital $C^{*}$-algebra. The $\cB$-valued
semicircular distributions form a class of examples that are
relatively well understood (see e.g. \cite{Sh1999}).
Here the centered semicircular
distributions are again indexed by their variances, but now these
variances are allowed to be arbitrary completely positive maps
$\eta : \cB \rightarrow \cB$ (!) This motivates a question,
explicitly asked by Hari Bercovici, of defining convolution
powers $\mu^{\boxplus \eta}$ for general $\cB$-valued distributions
$\mu$, so that in particular one obtains the $\cB$-valued analogue
of Equation (\ref{eqn:1.2}):
\begin{equation}  \label{eqn:1.3}
\gamma_\eta = \gamma^{\boxplus \eta},
\end{equation}
where ``$\gamma$'' stands now for the centered semicircular
distribution having variance equal to the identity map on $\cB$.

The answer to this question is one of the main results of this paper,
Theorem~\ref{Thm:alpha-geq-1}. Since the rescaling by $t$ from
Equation (\ref{eqn:1.1}) can be viewed as a
particular case of composing the $R$-transform with a
positive map from $\cB$ to $\cB$, we consider the problem of
defining convolution powers $\mu^{\boxplus \eta}$ via the formula
\begin{equation}   \label{eqn:1.4}
R_{\mu^{\boxplus \eta}} = \eta \circ R_{\mu},
\end{equation}
where $\eta : \cB \to \cB$ is a completely positive map.
Theorem~\ref{Thm:alpha-geq-1} says that (\ref{eqn:1.4}) meaningfully
defines a distribution $\mu^{\boxplus \eta}$ whenever
$\eta : \cB \to \cB$ is such that $\eta - 1$ is a completely positive
map (and where ``$1$'' stands for the identity map on $\cB$).

Here is the moment to make a clarification of our notations. We
will use the notation $\Sigma(\cB)$ for the space of all
$\cB$-valued distributions; the elements of $\Sigma(\cB)$ are
thus positive $\cB$-bimodule maps
$\mu : \cB \langle \cX \rangle \to \cB$
(see Section 2 below for notational details). A smaller class
$\Sigma^0(\cB)$ corresponds to the compactly supported distributions
from the $\bC$-valued case. On the other hand we will use the notation
$\Sigma_{alg} ( \cB )$ for the larger space of all unital $\cB$-bimodule
maps $\mu : \cB \langle \cX \rangle \to \cB$. It is easily seen that
Equation (\ref{eqn:1.4}) can be invoked to define the convolution
power $\mu^{ \boxplus \eta} \in \Sigma_{alg} ( \cB )$ for every
$\mu \in \Sigma_{alg} ( \cB )$ and every linear map
$\eta : \cB \to \cB$. The point of Theorem~\ref{Thm:alpha-geq-1} is
that upon starting with $\mu$ in the smaller space $\Sigma(\cB)$ and
with $\eta$ as described above, the resulting convolution power
$\mu^{ \boxplus \eta}$ still belongs to $\Sigma(\cB)$. In order to
arrive to this point, a key role in our considerations will be played
by the interaction between free probability and a simpler form of
noncommutative probability, called Boolean probability. We
elaborate on this interaction in the next subsection.

\subsection{Relations with Boolean probability}

Boolean probability has an operation of Boolean convolution $\uplus$,
which reflects the operation of addition for Boolean independent
random variables. This in turn has a linearizing transform, which
will be denoted in this paper by $B_{\mu}$. (Usually the linearizing
transform for $\uplus$ is rather denoted as ``$\eta_{\mu}$'', but in
this paper $\eta$ is reserved for denoting variance linear maps on
$\cB$.) If $\mu$ is a compactly supported probability measure on
$\bR$ then one defines convolution powers with respect to $\uplus$
by using the suitable rescaling of the $B$-transform,
\begin{equation}   \label{eqn:1.5}
B_{\mu^{\uplus t}} = t \cdot B_{\mu}.
\end{equation}
So far this parallels very closely the development of
$\boxplus$-powers, with $B_{\mu}$ being the Boolean counterpart of
$R_{\mu}$. However, due to the simpler nature of Boolean probability,
one now has \cite{SW1997} that $\mu^{\uplus t}$ is defined for every
compactly supported probability measure on $\bR$ and every
$t \geq 0$. (In the Boolean world, every $\mu$ is $\uplus$-infinitely
divisible.) This fortunate fact can then be put to use in free
probability due to the existence of a special bijection, called the
Boolean Bercovici-Pata bijection \cite{BP1999}, which links the
$R$-transform to the $B$-transform. Moreover, the Boolean
Bercovici-Pata bijection can be incorporated \cite{BN2008b} into
a semigroup $\{ \bB_t \mid t \geq 0 \}$ of transformations on
probability measures, which are explicitly defined in terms of
convolution powers:
\begin{equation}  \label{eqn:1.6}
\bB_t ( \mu ) := \Bigl( \, \mu^{\boxplus (1+t)} \,
\Bigr)^{ \uplus (1+t)^{-1} },
\end{equation}
holding for every probability measure $\mu$ on $\bR$ and every
$t \geq 0$. The original Boolean Bercovici-Pata bijection is
$\bB = \bB_1$. The transformations $\bB_t$ are sometimes said to
give an ``evolution towards $\boxplus$-infinite divisibility'', due
to the fact that $\bB_t ( \mu )$ is $\boxplus$-infinite divisible
for every $\mu$ and whenever $t \geq 1$.

The considerations from the preceding paragraph were given in the
$\bC$-valued framework. But the interactions between free and Boolean
probability turn out to continue to hold when one goes to
operator-valued framework. Let us first concentrate on the sheer
algebraic and combinatorial aspects of how this happens.
Paralleling the $\boxplus$ case, it is easily seen that one can
define the convolution power
$\mu^{ \uplus \eta } \in \Sigma_{alg} ( \cB )$ for every
$\mu \in \Sigma_{alg} ( \cB )$ and every linear map
$\eta : \cB \to \cB$, by simply making the requirement that
\begin{equation}   \label{eqn:1.7}
B_{ \mu^{ \uplus \eta } } = \eta \circ B_{\mu}.
\end{equation}
It is however non-trivial how to combine the generalized convolution
powers from (\ref{eqn:1.4}) and (\ref{eqn:1.7}) in order to
create, for a general linear map $\alpha : \cB \to \cB$, a
transformation $\bB_{\alpha}$ on $\Dist ( \cB )$ which is the analogue
of $\bB_t$ from Equation (\ref{eqn:1.6}). This requires some departure
from the techniques used previously in the $\bC$-valued case, and is
achieved in Section 6 of the paper. In Theorem \ref{thm:6.4} we
prove that the transformations
$\bB_{\alpha} : \Dist ( \cB ) \to \Dist ( \cB )$ which are
obtained form a commutative (!) semigroup:
\begin{equation}   \label{eqn:1.8}
\bB_{\alpha} \circ \bB_{\beta} = \bB_{\alpha + \beta}, \ \
\forall \, \alpha , \beta : \cB \to \cB
\end{equation}
(we emphasize that in (\ref{eqn:1.8}) the linear maps
$\alpha$ and $\beta$ are not required to commute). We show moreover
that one has the formula
\begin{equation}   \label{eqn:1.9}
\Bigl( \, \bB_{\alpha} ( \mu ) \, \Bigr)^{\uplus (1+ \alpha)} =
\mu^{\boxplus (1+ \alpha)},
\end{equation}
holding for every $\mu \in \Dist ( \cB )$ and every linear map
$\alpha : \cB \to \cB$; so in the special case when
$1 + \alpha$ is invertible, one can raise both sides of
(\ref{eqn:1.9}) to the power $\uplus (1+ \alpha)^{-1}$ in
order to obtain a faithful analog of the formula (\ref{eqn:1.6})
from the $\bC$-valued framework.

While the space $\Dist ( \cB )$ provides a nice larger environment
which is good for algebraic manipulations, our interest really lies
in the smaller spaces $\Sigma ( \cB )$ and $\Sigma^{0} ( \cB )$,
consisting of distributions which can appear in $C^*$-framework.
Thus it is of certain interest to look at the case when the
linear maps $\alpha , \beta , \eta$ that were
considered above are completely positive (on the unital $C^*$-algebra
$\cB$ that is fixed throughout the paper), and to establish
conditions under which the corresponding convolution powers and/or
transformations $\bB_{\alpha}$ leave
$\Sigma ( \cB )$ and $\Sigma^{0} ( \cB )$ invariant. We obtain this
by using some suitable {\em operator models} for distributions in
$\Sigma ( \cB )$. Two such (relatively simple) operator models are
described in Section 7 of the paper, and are used to prove that:

(i) $\mu^{ \uplus \eta } \in \Sigma(\cB)$ whenever
$\mu \in \Sigma(\cB)$ and $\eta : \cB \to \cB$ is a completely
positive linear map (cf. Theorem \ref{thm:7.5});

(ii) $\bB_{\alpha} ( \mu ) \in \Sigma ( \cB )$ whenever
$\mu \in \Sigma(\cB)$ and $\alpha : \cB \to \cB$ is a completely
positive linear map (cf. Theorem \ref{thm:7.8}).

Finally, we can now return to the point left at the end of
subsection 1.1, and explain why is it that
$\mu^{\boxplus \eta} \in \Sigma ( \cB )$ whenever
$\mu \in \Sigma ( \cB )$ and $\eta : \cB \to \cB$ is such that
$\eta -1$ is completely positive: denoting $\eta - 1 =: \alpha$,
we see from (\ref{eqn:1.9}) that the distribution
$\mu^{\boxplus \eta}$ (which a priori lives in the larger space
$\Dist ( \cB )$) satisfies
\[
\mu^{\boxplus \eta} =
\Bigl( \, \bB_{\alpha} ( \mu ) \, \Bigr)^{\uplus \eta};
\]
but the distribution on the right-hand side of the latter equation
does belong to $\Sigma ( \cB )$, by virtue of the results (i) and
(ii) indicated above.

$\ $

\subsection{Relations to free Brownian motion, and to
analytic functions}

Consider again the $\bC$-valued framework, and the semigroup of
semicircular distributions $\{ \gamma_t \mid t \geq 0 \}$ from
Equation (\ref{eqn:1.2}). For a probability measure $\mu$ on $\bR$,
the process $\{ \mu \boxplus \gamma_t \mid t \geq 0 \}$ is called
the {\em free Brownian motion started at $\mu$}. In
\cite{BN2008b, BN2009} it was observed that the transformations
$\bB_t$ from (\ref{eqn:1.6}) are related to the free Brownian
motion via an evolution equation of the form
\[
\bB_t \bigl( \, \Phi ( \mu ) \, \bigr) =
\Phi ( \, \mu \boxplus \gamma_t \, ), \ \ t \geq 0,
\]
where $\Phi$ is a special transformation on probability measures
(not depending on $\mu$ or $t$). In this paper we introduce the
$\cB$-valued analog of the transformation $\Phi$ (defined on lines
similar to those of \cite{BN2009}), and we obtain the corresponding
evolution equation, where the role of time parameter is now taken
by a linear map $\eta : \cB \to \cB$. That is, we have that
\[
\bB_\eta [\Phi[\mu]] = \Phi[\mu \boxplus \gamma_\eta] ,
\]
holding for $\mu \in \Sigma ( \cB )$ and $\eta : \cB \to \cB$
completely positive. This is obtained in Theorem \ref{thm:7.11}
(in a plain algebraic version) and in Corollary \ref{cor:7.11}
(in the completely positive version).

$\ $

In a related development, Section 8 of the paper establishes some
results concerning the operator-valued analytic function theory
related to the convolution powers $\mu^{\boxplus \eta}$.

On the one hand we show that for $\eta$ as in
Theorem~\ref{Thm:alpha-geq-1}
(that is, an $\eta$ such that $\eta -1$ is completely positive)
one has analytic subordination of the Cauchy-Stieltjes transform
of $\mu^{\boxplus \eta}$ with respect to the Cauchy-Stieltjes
transform of $\mu$. This is the $\cB$-valued analogue of a known fact
from the $\bC$-valued framework \cite{BB2004, BB2005},
but where now the subordination function is an analytic self-map
of the set $\{ b \in \mathcal B \mid \Im b>0 \}$.

On the other hand, we show that the Cauchy-Stieltjes transform
of the $\cB$-valued free Brownian motion started at a
distribution $\mu \in \Sigma^{(0)} ( \cB )$ satisfies a $\cB$-valued
version of the inviscid Burgers equation. The occurrence of the
Burgers equation in free probability came with a fundamental
result of Voiculescu, where (in $\bC$-valued framework) the complex
Burgers equation was found to be the free analogue of the heat
equation. This means, more precisely, that the complex Burgers
equation with initial data the Cauchy-Stieltjes transform of a
given probability measure $\mu$ on $\bR$ is solved by the
Cauchy-Stieltjes transform of the free Brownian motion started at
$\mu$. Theorem \ref{thm:8.3} of the present paper establishes the
$\cB$-valued analog of this (in fact of a slightly stronger
result, given in \cite{BN2008b}, which is expressed in terms of the
transformations $\bB_t$).

\subsection{Organization of the paper}

Besides the present introduction, the paper has 8 other sections.
Section~2 reviews some general background from $\cB$-valued
noncommutative probability, then in Section~3 we set up the
$\cB$-series machinery and the nesting structures corresponding
to non-crossing partitions. Sections 4 and 5 set up basic definitions
and results concerning the operator-valued $R$ and $B$-transforms,
and concerning a transformation $\Reta$ (defined on $\cB$-series)
which connects them. The definition of convolution powers
$\mu^{\boxplus \eta}$ and $\mu^{\uplus \eta}$ is also given here.
In Section 6, the transformations $\bB_\eta$ are defined and shown
to form a semigroup, and the evolution equation is proved.
In Section 7 we describe the operator models and use them to
prove the positivity results mentioned in subsection 1.2 above.
Section 8 contains the results
on operator-valued analytic function theory which were announced in
subsection 1.3. Finally, the main result in Section 9 is an
alternative operator model for Boolean convolution powers.

$\ $

$\ $

\section{Preliminaries}

\setcounter{equation}{0}
\setcounter{theorem}{0}

\subsection{Free $\cB$-bimodule}
Throughout the paper $\cB$ will be a $C^\ast$-algebra. For $\cX$ a
formal variable, denote
\begin{equation}   \label{eqn:2.11}
\cB \langle \cX \rangle =
\cB \oplus \cB \cX \cB \oplus \cB \cX \cB \cX \cB \oplus \ldots
\end{equation}
the algebra of all polynomials in $\cX$, with coefficients in $\cB$,
and
\[
\cB_0 \langle \cX \rangle =
\cB \cX \cB \oplus \cB \cX \cB \cX \cB \oplus \ldots
\]
the polynomials without constant term. We will assume that $\cX$ and
$\cB$ are algebraically independent. This means (by definition)
that Equation (\ref{eqn:2.11}) amounts to
\[
\cB \langle \cX \rangle \simeq \bigoplus_{n=1}^\infty
\cB^{\otimes_{\bC}^n},
\]
the tensor product being over $\bC$. The set $\cB \langle \cX \rangle$ is a
$\cB$-bimodule in the obvious way.

In particular, a collection of $\bC$-linear maps
$\mu_n : \cB^{\otimes_{\bC} (n-1)} \rightarrow \cB$ can be combined via
\[
\mu[\cX b_1 \cX b_2 \cdots b_{n-1} \cX] =
\mu_n(b_1 \otimes b_2 \otimes \ldots \otimes b_{n-1})
\]
into a $\cB$-bimodule map
\[
\mu : \cB \langle \cX \rangle \rightarrow \cB
\]
such that $\mu[b] = b$ for $b \in \cB$. As already mentioned in the
introduction, the set of all such maps will be denoted by $\Dist(\cB)$.

If such a $\mu$ is positive, we will refer to it as a
\emph{conditional expectation}, and omit the term ``positive''.
Since $\cB$ is a $C^\ast$-algebra, by Proposition 3.5.4 of
\cite{S1998}, in this case $\mu$ is automatically completely positive.
We will denote
\[
\Sigma(\cB) = \set{\text{(positive) conditional expectations }
\mu : \cB \langle \cX \rangle \rightarrow \cB}.
\]

\subsection{$\cB$-series}

\begin{definition}  \label{def:2.31}

$1^o$ We will use the name {\em $\cB$-series} for objects of the form
\begin{equation}   \label{eqn:2.311}
F = ( \beta_n )_{n \geq 1},
\end{equation}
where, for every $n \geq 1$, $\beta_n : \cB^{n-1} \to \cB$ is a
$\bC$-multilinear functional (with the convention that $\beta_1$
is an element of $\cB$). The set of all $\cB$-series will be
denoted by $\Serie ( \cB )$.

$2^o$ For a series $F$ as in (\ref{eqn:2.311}), the functionals
$\beta_n$ will be referred to as {\em terms} of $F$. We will use
the notation
\begin{equation}   \label{eqn:2.312}
F^{[n]} := \bigl( \, \mbox{$n$-th term of $F$} \, \bigr), \ \
\mbox{ for $F \in \Serie ( \cB )$ and $n \geq 1$.}
\end{equation}

$3^o$ On $\Serie ( \cB )$ we have some natural operations. In
the present paper it is particularly important that for every
$\bC$-linear map $\alpha : \cB \to \cB$ and every
series $F \in \Serie ( \cB )$ one can define the composition
$\alpha \circ F \in \Serie ( \cB )$ by putting
\begin{equation}   \label{eqn:2.313}
( \alpha \circ F )^{ [n] } :=  \alpha \circ F^{ [n] }, \ \
\forall \, n \geq 1
\end{equation}
(for $n=1$, this just means that
$( \alpha \circ F )^{ [1] } :=  \alpha ( \, F^{[1]} \, ) \in \cB$).

It is also clear that for $F_1, F_2 \in \Serie ( \cB )$ and
$\lambda_1 , \lambda_2 \in \bC$ one can define the linear combination
$\lambda_1 F_1 + \lambda_2 F_2 \in \Serie ( \cB )$ by putting
\begin{equation}   \label{eqn:2.314}
( \lambda_1 F_1 + \lambda_2 F_2 )^{ [n] } :=
\lambda_1 \cdot F_1^{[n]} + \lambda_2 \cdot F_2^{ [n] } , \ \
\forall \, n \geq 1.
\end{equation}
This extends naturally to a $\cB$-bimodule structure, where for
$b \in \cB$ and $F \in \Serie ( \cB )$ the new series $bF$ and
$Fb$ are obtained by taking the linear map $\alpha$ from
(\ref{eqn:2.313}) to be given by left (respectively right)
multiplication with $b$ on $\cB$.
\end{definition}

\begin{definition}   \label{def:2.32}
Let $\mu$ be a distribution in $\Dist ( \cB )$. The
{\em moment series} of $\mu$ is the series
$M_{\mu} \in \Serie ( \cB )$ with terms defined as follows:
$M_{\mu}^{ [1] } = \mu ( \cX )$ and
\begin{equation}   \label{eqn:2.321}
M_{\mu}^{ [n] } (b_1, \ldots , b_{n-1}) =
\mu [ \cX b_1 \cX b_2 \cdots \cX b_{n-1} \cX ]
\end{equation}
for every $n \geq 2$ and $b_1, \ldots , b_{n-1} \in \cB$.
\end{definition}

\begin{remark}   \label{rem:2.33}
Clearly, the correspondence $\mu \mapsto M_{\mu}$ is a bijection
between $\Dist(\cB)$ and $\Serie(\cB)$. The example of moment
series also explains why in Definition \ref{def:2.31} we used the
notation $F^{[n]}$ for a function of $n-1$ arguments (the functional
$M_{\mu}^{ [n] } : \cB^{n-1} \to \cB$ really is some kind of
``moment of order $n$'' for $\mu$).
\end{remark}

\begin{remark}   \label{rem:2.34}
Let $F$ be a $\cB$-series and let $b$ be an element of $\cB$.
In preparation of analytic considerations that will show up later
in the paper, we mention that we will use the notation
\[
F(b) := F^{ [1] } + \sum_{n=2}^{\infty} F^{[n]}(b, \dots,b) \in \cB
\]
whenever the sum on the right-hand side of this equality converges
(in the norm topology of the $C^{*}$-algebra $\cB$).
\end{remark}

\begin{remark}   \label{rem:2.35}
In order to justify the terminology introduced above, let us look
for a moment at what this amounts to in the special case when
$\cB = \bC$. In this case what one does
is to take a series with complex coefficients
\begin{equation}   \label{eqn:2.351}
f(z) = \sum_{n=1}^{\infty} \ \alpha_{n} z^{n-1},
\end{equation}
and convert it into $F = ( F^{[n]} )_{n \geq 1}$,
with $F^{[1]} = \alpha_1 \in \bC$ and where
$F^{[n]} : \bC^{n-1} \to \bC$ is defined by
\[
F^{[n]} (z_1, \ldots, z_{n-1}) = \alpha_n z_1 \cdots z_{n-1},
\ \ \forall \, n \geq 2 \mbox{ and }
z_1, \ldots , z_{n-1} \in \bC .
\]
So here every $F^{[n]}$ is indeed a ``term'' in the writing of
the series $f(z)$ (under suitable convergence hypotheses, the
right-hand side of (\ref{eqn:2.351}) is the infinite sum
$\sum_{n=1}^{\infty}  F^{[n]} (z,z, \ldots , z)$).
\end{remark}

\subsection{Distributions in a $\cB$-valued
$C^{*}$-probability space}

A {\em $\cB$-valued $C^\ast$-probability space} is a
pair $(\cM, \bE)$ where
$\cM$ is a $C^\ast$-algebra such that $\cM \supseteq \cB$ and
$\bE: \cM \rightarrow \cB$ is a conditional expectation (a unital
positive $\cB$-bimodule map).

If $(\cM, \bE)$ is a $\cB$-valued $C^\ast$-probability space and
if $X = X^{*} \in \cM$, then one defines the {\em distribution} of
$X$ as the conditional expectation
$\mu_X \in \Sigma(\cB)$ given by
\[
\mu_X[b_0 \cX b_1 \cX \cdots \cX b_n]
= \bE[b_0 X b_1 X \cdots X b_n].
\]
We will denote by
\[
\Sigma^0(\cB) \subset \Sigma(\cB)
\]
the set of all $\mu$ arising in this way.
Equivalently, $\mu \in \Sigma^0(\cB)$
if for any state $\phi$ on $\cB$, the operator $\cX$ in the GNS
representation of $(\cB\langle \cX \rangle, \phi \circ \mu)$ is
bounded. More explicitly this is the case if for some $M > 0$ and
all $b_1, b_2, \ldots, b_{n-1} \in \cB$,
\begin{equation}
\label{Eq:Bounded}
\norm{\mu[\cX b_1 \cX \ldots b_{n-1} \cX]} \leq M^n \norm{b_1} \cdot \norm{b_2} \cdot \ldots \cdot \norm{b_{n-1}}.
\end{equation}

Moreover, the {\em moment series} of $X$ is defined to be
\[
M_{ { }_X } := M_{ \mu_{X} } \in \Serie ( \cB ).
\]
Or in other words, one has $M_{ { }_X }^{[1]} = \bE [X]$ and
\[
M_{ { }_X }^{[n]} (b_1, \ldots , b_{n-1}) =
\bE[X b_1 X b_2 \cdots X b_{n-1} X]
\]
for every $n \geq 2$ and $b_1, \ldots , b_{n-1} \in \cB$.
It is worth noting that
\begin{equation}\label{M-series}
M_{ { }_X } (b)=\sum_{n=1}^\infty M_{ { }_X}^{[n]}(b,\dots,b)
\end{equation}
is an analytic map on $\{b\in\mathcal B\colon\|b\|<\|X\|^{-1}\}$.

We also mention here that the {\em generalized resolvent} (or
{\em operator-valued Cauchy-Stieltjes transform}) of the
distribution $\mu_X$ of a random variable $X$ is defined by
\begin{equation}\label{eq:2.10}
G_{\mu_X}(b)=\mathbb E\left[(b-X)^{-1}\right].
\end{equation}
This function of $b$ is analytic on the set of elements $b\in\cB$
for which $b-X$ is invertible. The Cauchy-Stieltjes transform
is particularly relevant in the context of fully matricial sets and
maps \cite{V2000}. Its natural domain in the $C^*$-algebraic context
is the set $\{b\in\mathcal B\colon\Im b>0\}$. (By $\Im b>0$ we mean
that there exists some $\varepsilon>0$ so that $\Im b=(b-b^*)/2i\ge
\varepsilon\cdot1$.) However, the equality
$$
G_{\mu_X}(b)=b^{-1}(M_X(b^{-1})+1)b^{-1}
$$
is easily seen to be true for $b$ invertible with $\|b^{-1}\|<\|X\|^{-1}.$

\subsection{Independence and convolution}

Let $(\cM, \bE)$ be a $\cB$-valued $C^\ast$-probability space,
and let $\cM_1, \cM_2, \ldots, \cM_k$ be subalgebras of $\cM$ which
contain $\cB$. These subalgebras are said
to be {\em freely independent} with respect to $\bE$ when the
following happens: for any $j(i) \neq j(i+1)$, $X_i \in \cM_{j(i)}$
with $\bE[X_i] = 0$, and for any $b_0, \ldots b_n \in \cB$, we have
\[
\bE[b_0 X_1 b_2 X_2 \ldots X_n b_n] = 0.
\]
Operators are freely independent if the $\ast$-subalgebras they
generate over $\cB$ are freely independent.

Let $(\cM, \bE)$ be a $\cB$-valued $C^\ast$-probability space, and
suppose we have a decomposition $\cM = \cB \oplus \cM_0$ (with
multiplication $(b_1, m_1) \cdot (b_2, m_2) =
(b_1 b_2, b_1 m_2 + m_1 b_2 + m_1 m_2)$). Subalgebras
$\cM_1, \cM_2, \ldots, \cM_k \subset \cM_0$ are
{\em Boolean independent} with respect to $\bE$ if for any
$j(i) \neq j(i+1)$, $X_i \in \cM_{j(i)}$ for $i \neq 1, n$,
$X_i \in \cB \oplus \cM_{j(i)}$, $i = 1, n$, and any
$b_0, \ldots b_n \in \cB$ we have
\[
\bE[b_0 X_1 b_2 X_2 \ldots X_n b_n] =
b_0 \bE[X_1] b_1 \bE[X_2] \ldots \bE[X_n] b_n.
\]
Operators in $\cM_0$ are Boolean independent if the $\ast$-subalgebras
of $\cM_0$ they generate over $\cB$ are Boolean independent.

If $\mu, \nu \in \Sigma(\cB)$, there exist freely independent symmetric (possibly unbounded) operators $X, Y$ with $\mu_X = \mu$, $\mu_Y = \nu$. The distribution of $X + Y$ is uniquely determined by $\mu$ and $\nu$, and is their free convolution:
\[
\mu \boxplus \nu := \mu_{X+Y} \in \Sigma(\cB).
\]
Similarly, if $X$ and $Y$ are chosen Boolean independent, their distribution is the Boolean convolution of $\mu$ and $\nu$,
\[
\mu \uplus \nu := \mu_{X+Y} \in \Sigma(\cB).
\]

As mentioned in the introduction, both $\boxplus$ and $\uplus$ have
linearizing transforms, the $R$-transform and respectively the
$B$-transform. For $\mu \in \Sigma(\cB)$ (and more generally, for
$\mu \in \Sigma_{alg} ( \cB )$) the $R$-transform $R_{\mu}$ and the
$B$-transform $B_{\mu}$ are series in $\Serie ( \cB )$; their precise
definitions will be reviewed in Section~\ref{Section:Transforms}
below. The linearization property is that
\[
R_{\mu \boxplus \nu} = R_\mu + R_\nu
\mbox{ and respectively }
B_{\mu \uplus \nu} = B_\mu + B_\nu,
\mbox{ for every } \mu , \nu \in \Sigma(\cB).
\]

\section{$\cB$-series and non-crossing partitions}

\setcounter{equation}{0}
\setcounter{theorem}{0}

\begin{remark}   \label{rem:3.3}
($NC(n)$ terminology.)

The workhorse for combinatorial considerations in free probability
is the set $NC(n)$ of non-crossing partitions of
$\{ 1, \ldots , n \}$. In connection to it we will use the
the standard notations and terminology, as appearing for instance
in Lecture 9 of the monograph \cite{NS2006}. In particular the
partitions in $NC(n)$ will be denoted by letters like
$\pi, \rho , \ldots$ (typical notation will be
$\pi = \{ V_1, \ldots , V_k \} \in NC(n)$, where the $V_i$ are
the blocks of $\pi$). We will also use the customary partial order
given on $NC(n)$ by reverse refinement: for $\pi , \rho \in NC(n)$
we write ``$\pi \leq \rho$'' to mean that every block of $\rho$ is
a union of blocks of $\pi$. The minimal and maximal element of
$( NC(n), \leq )$ are denoted by $0_n$ (the partition of
$\{ 1, \ldots , n \}$ into $n$ singleton blocks) and respectively
$1_n$ (the partition of $\{ 1, \ldots , n \}$ into only one block).
\end{remark}

\begin{remark}   \label{rem:3.4}
(Nested terms $F^{[ \pi ]}$ for $F \in \Serie ( \cB )$ and
$\pi \in NC(n)$.)

Let a series $F \in \Serie ( \cB )$ be given. In this remark and in
the next definition we explain how one naturally constructs a family
of $\bC$-multilinear functionals $F^{ [ \pi ] } : \cB^{n-1} \to \cB$,
one such functional for every $n \geq 1$ and every $\pi \in NC(n)$.
If $\pi$ happens to be $1_n$ then we will just get
$F^{ [1_n] } = F^{ [n] }$, the $n$-th term of the series $F$. For a
general $\pi \in NC(n)$,
the point of view that works best when defining $F^{[ \pi ]}$ is to
treat $\pi$ as a ``recipe for nesting intervals inside each other''.
Indeed, the idea of nesting intervals has a correspondent in the
framework of multilinear functionals, where such functionals are
nested inside each other by using parentheses. (Thus if $\pi$ is
written explicitly, $\pi = \{ V_1, \ldots , V_k \}$, then
$F^{[ \pi ]}$ will be obtained by suitably nesting inside each
other the functionals
$F^{[ \ |V_1| \ ]}, \ldots , F^{[ \ |V_k| \ ]}$.)
This very fundamental relation between non-crossing partitions
and multilinear functionals arising in $\cB$-valued noncommutative
probability was put into evidence in \cite{S1998}.

We first explain how the things go on a concrete example. Consider
a functional like $L: \cB^4 \to \cB$, where
\[
L( b_1, \ldots , b_4 ) :=
F^{[3]} ( \, b_1 F^{[2]} (b_2) b_3 \, , \, b_4 \, ), \ \ \mbox{ for }
b_1, \ldots , b_4 \in \cB .
\]
In a very ``literal'' sense (from the point of view of a typesetter)
 the right-hand side of the above formula is of the form
\[
W_1 b_1 W_2 b_2 W_3 b_3 W_4 b_4 W_5
\]
where each of $W_1, \ldots , W_5$ is a string of symbols made out
of left and right parentheses, commas, and the occasional
``$F^{[m]}$''. More precisely, we have
\[
\left\{  \begin{array}{l}
\mbox{$W_1$ = ``$F^{[3]} ( \,$'' }      \\
\mbox{$W_2$ = ``$F^{[2]} ( \,$'' }    \\
\mbox{$W_3$ = ``$\, ) \, $'' (only a right bracket) }      \\
\mbox{$W_4$ = ``$\, , \, $'' (only a comma) }              \\
\mbox{$W_5$ = ``$\, ) \, $'' (only a right bracket). }     \\
\end{array}   \right.
\]
Conversely, suppose that somebody was to give us the words
$W_1, \ldots , W_5$ listed above; then we could write
down mechanically the sequence
$W_1 b_1 W_2 b_2 W_3 b_3 W_4 b_4 W_5$, after which we could read
the result as a legit expression defining a functional from
$\cB^4$ to $\cB$.

Now, $L$ from the preceding paragraph turns out to be precisely
the functional $F^{[ \pi ]}$ which corresponds to our fixed series
$F$ and the non-crossing partition
$\pi = \{ \, \{ 1,4,5 \}, \, \{ 2,3 \} \, \} \in NC(5)$.
This is because the words $W_1, \ldots , W_5$ are exactly those
created by starting with this special $\pi$ and by applying the
rules described in the next definition.
\end{remark}

\begin{definition}    \label{def:3.5}
Let $F$ be a series in $\Serie ( \cB )$ and let $\pi$ be a
partition in $NC(n)$. For every $1 \leq m \leq n$ we define a
string of symbols, $W_m$, according to the following rules.

\begin{itemize}

\item
If $m$ is the minimum element of block $V$ of $\pi$ with
$|V| = k \geq 2$, then $W_m := ``F^{ [k] } ( $''.

\item
If $m$ is the maximum element of block $V$ of $\pi$ with
$|V| \geq 2$, then $W_m$ = ``$ \, ) \,$'' (just a right bracket).

\item
If $m$ belongs to a block $V$ of $\pi$ where
$\min (V) < m < \max (V)$, then $W_m$ = ``$ \, , \,$'' (just a comma).

\item
If $m$ forms by itself a singleton block of $\pi$, then
$W_m$ = ``$F^{ [1] }$'' (no parentheses or comma besides the
occurrence of $F^{ [1] }$).

\end{itemize}

\noindent
The $\bC$-multilinear functional $F^{ [ \pi ] } : \cB^{n-1} \to \cB$
is then defined as follows: given $b_1, \ldots , b_{n-1} \in \cB$
we form the string of symbols obtained by concatenating
\[
W_1 b_1 W_2 b_2 \cdots W_{n-1} b_{n-1} W_n;
\]
then we read this as a parenthesized expression which produces
an element $b \in \cB$, and we define
$F^{ [ \pi ] } (b_1, \ldots , b_{n-1})$ to be equal to this $b$.
\end{definition}

\begin{remark}    \label{rem:3.6}
The special case $\pi = 1_n$ of the above definition leads
to the formula
\[
F^{ [1_n] } =  F^{ [n] }, \ \ \forall n \geq 1.
\]
Indeed, in this case the string of symbols
$W_1 b_1 W_2 b_2 \cdots W_{n-1} b_{n-1} W_n$
has $W_1$ = ``$F^{ [n] } ($'', has $W_n$ = ``$)$'', and all of
$W_2, \ldots , W_{n-1}$ are commas.

On the other hand for $\pi = 0_n$ we get the formula
\[
F^{ [0_n] } (b_1, \ldots , b_{n-1} ) =
F^{ [1] } b_1 F^{ [1] } b_2 \cdots
F^{ [1] } b_{n-1} \cdots F^{ [1] }
\]
(holding for every $n \geq 2$ and $b_1, \ldots , b_{n-1} \in \cB$).

Let us show one more concrete example, illustrating how the
nestings and concatenations of blocks of $\pi$ generate a
parenthesized expression. Say that $n=6$ and that
$\pi = \{ \, \{ 1,3,4 \}, \{ 2 \}, \{ 5,6 \} \, \} \in NC(6)$.
Then the list of words $W_1, \ldots , W_6$ used to define
$F^{ [ \pi ] }$ goes like this:

\begin{center}
$W_1$ = ``$F^{ [3] } ($'',
$W_2$ = ``$F^{ [1] }$'',
$W_5$ = ``$F^{ [2] } ($'',
\end{center}

\noindent
while $W_4$ and $W_6$ are right parentheses and $W_3$ is a comma.
So the string of symbols $W_1 b_1 \cdots W_5 b_5 W_6$ gives us here
the formula
\[
F^{ [ \pi ] } (b_1, \ldots , b_5) =
F^{ [3] } ( b_1 F^{ [1] } b_2 , b_3 ) b_4
F^{ [2] } ( b_5 ), \ \ \forall \, b_1, \ldots , b_5 \in \cB .
\]

We next record a general fact which follows from
the procedure of constructing the functionals $F^{ [ \pi ] }$,
and which will be used repeatedly in the sequel.
\end{remark}

\begin{lemma}  \label{lemma:3.8}
Let $F$ be a series in $\Serie ( \cB )$ and let
$\rho = \{ V_1, \ldots , V_k \}$ be a partition in $NC(n)$.
Consider the formula defining the multilinear functional
$F^{ [ \rho ] } : \cB^{n-1} \to \cB$. This formula has embedded
in it some occurences of the functionals
$F^{ [ \, |V_1| \, ] }, \ldots , F^{ [ \, |V_k| \, ] }$.
Suppose that we take some partitions
$\pi_1 \in NC( |V_1| ), \ldots , \pi_k \in NC( |V_k| )$,
and that for every $1 \leq j \leq k$ we replace the functional
$F^{ [ \, |V_j| \, ] }$ by the functional $F^{ [ \pi_j ] }$,
inside the formula for $F^{ [ \rho ] }$. Then the formula
defining $F^{ [ \rho ] }$ is transformed into the formula for
$F^{ [ \pi ] }$, with $\pi \in NC(n)$ defined as follows:
\[
\pi = \widehat{\pi_1} \cup \cdots \cup \widehat{\pi_k},
\]
where, for every $1 \leq j \leq k$, we denote by
$\widehat{\pi_j} \in NC(V_j)$ the non-crossing partition
obtained by relabelling $\pi_j$.
\end{lemma}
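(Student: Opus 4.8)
The plan is to prove Lemma~\ref{lemma:3.8} by unwinding Definition~\ref{def:3.5} on both sides and checking that the ``word'' associated to each position $1 \le m \le n$ under $\pi = \widehat{\pi_1} \cup \cdots \cup \widehat{\pi_k}$ coincides with the symbol string that appears in the indicated spot of the modified formula for $F^{[\rho]}$. The key observation is that the construction $F \mapsto F^{[\pi]}$ is entirely local to blocks: the word $W_m$ assigned to $m$ depends only on the block of $\pi$ containing $m$ and on the position of $m$ within that block (minimum, maximum, interior, or singleton). So I would begin by fixing notation: write $V_j = \{ v^{(j)}_1 < v^{(j)}_2 < \cdots < v^{(j)}_{|V_j|} \}$ for the blocks of $\rho$, and let $\iota_j : \{ 1, \ldots, |V_j| \} \to V_j$ be the order-preserving bijection, so that $\widehat{\pi_j} = \{ \iota_j(W) : W \text{ a block of } \pi_j \}$, and $\pi$ is the disjoint union of these relabelled partitions.

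The heart of the argument is the following bookkeeping claim. When one writes out the defining formula for $F^{[\rho]}(b_1, \ldots, b_{n-1})$ as a concatenation $W_1^\rho b_1 W_2^\rho b_2 \cdots W_{n-1}^\rho b_{n-1} W_n^\rho$, the occurrence of the functional $F^{[\,|V_j|\,]}$ (for a block $V_j$ with $|V_j| \ge 2$) sits at position $\min(V_j)$ and its matching right parenthesis at position $\max(V_j)$; the arguments fed into this occurrence of $F^{[\,|V_j|\,]}$ are exactly the $|V_j|-1$ sub-strings that the remaining elements of $V_j$ (together with the parts of $\rho$ nested inside those gaps) carve out between consecutive elements of $V_j$. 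I would make precise that replacing $F^{[\,|V_j|\,]}$ by $F^{[\pi_j]}$ inside this occurrence has the effect of re-parenthesizing precisely the slots indexed by $V_j$, in a manner governed by $\pi_j$. Then I would verify, position by position, that this re-parenthesization agrees with Definition~\ref{def:3.5} applied directly to $\pi$: for $m \in V_j$, the word $W_m^\pi$ depends only on the block of $\widehat{\pi_j}$ containing $m$, and under the relabelling $\iota_j$ this block corresponds to a block of $\pi_j$, with $m$ occupying the analogous min/max/interior/singleton role; this is exactly the symbol inserted when $F^{[\,|V_j|\,]}$ is replaced by $F^{[\pi_j]}$. (One also checks the boundary cases: if $|V_j| = 1$, so $V_j = \{\min(V_j)\}$ is a singleton of $\rho$ and $\pi_j$ is forced to be $1_1 = 0_1$, then nothing changes, consistent with $F^{[\pi_j]} = F^{[1]} = F^{[\,|V_j|\,]}$.)

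To keep the verification honest rather than hand-wavy, I would phrase it via a single invariant: for each $1 \le m \le n$, the word produced for position $m$ by Definition~\ref{def:3.5} applied to $\pi$ equals the word produced for position $m$ in the substituted formula for $F^{[\rho]}$; and likewise the parenthesization (i.e.\ the matching of left and right brackets) agrees. Since $\pi$ is a non-crossing partition — this uses that each $\widehat{\pi_j} \in NC(V_j)$ and that the $V_j$ are themselves the blocks of a non-crossing $\rho$, so no crossings are introduced — the resulting string of symbols is a well-formed parenthesized expression on both sides, and two well-formed parenthesized expressions with the same symbol at every position and the same bracket-matching denote the same functional. Assembling these pointwise agreements gives the claimed identity of formulas, hence $F^{[\pi]}$ as asserted.

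The main obstacle I anticipate is purely expository: making the phrase ``the formula for $F^{[\rho]}$ has embedded occurrences of $F^{[\,|V_j|\,]}$ which get replaced'' into something precise without drowning in notation for nested sub-strings. The cleanest route is probably to argue by induction on $k$ (the number of blocks of $\rho$), or even better by induction on the number of $j$ with $\pi_j \ne 1_{|V_j|}$ — at each step one changes a single $F^{[\,|V_j|\,]}$, reducing to the case where $\rho$ has exactly one non-singleton-refined block, and there the statement is a direct unwinding of the definition using the interval/nesting picture from Remark~\ref{rem:3.4}. In that reduced case the non-crossing hypothesis guarantees that inserting $F^{[\pi_j]}$ in place of $F^{[\,|V_j|\,]}$ touches only the symbols at positions in $V_j$ and leaves every other $W_m$ untouched, which is exactly what Definition~\ref{def:3.5} for $\widehat{\pi_j} \cup (\rho \setminus V_j)$ predicts.
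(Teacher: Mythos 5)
Your proposal is correct and follows essentially the same route as the paper's proof: one writes out the string of words $W_1 b_1 \cdots W_{n-1} b_{n-1} W_n$ defining $F^{[\rho]}$ and checks, position by position, that after substituting each $F^{[\,|V_j|\,]}$ by $F^{[\pi_j]}$ the resulting words are exactly those produced by Definition~\ref{def:3.5} applied to $\pi = \widehat{\pi_1} \cup \cdots \cup \widehat{\pi_k}$. The paper states this verification as ``immediate,'' whereas you spell out the locality/bookkeeping (and an optional induction on blocks), but the underlying argument is the same.
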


\begin{proof}
Look at the string of symbols
$W_1 b_1 \cdots W_{n-1} b_{n-1} W_n$
which is used in the definition of $F_{\rho}$, and follow how
the words $W_1, \ldots , W_n$ are changed when one replaces every
$F^{ [ \, |V_j| \, ] }$ by $F^{ [ \pi_j ] }$, $1 \leq j \leq k$.
It is immediate that the ensuing string of words is exactly the
one which appears in the definition of $F^{ [ \pi ] }$.
\end{proof}

\begin{remark}   \label{rem:3.9}
At some points throughout the paper we will need a variation of the
construction of $F^{ [ \pi ] }$ which involves {\em coloured}
non-crossing partitions.  For the sake of simplicity, we discuss here
the situation of colourings which use two colours. Given a
partition $\pi \in NC(n)$, a colouring of $\pi$ is then a map
$c : \pi \to \{ 1,2 \}$ (that is, a procedure which associates to
every block $V$ of $\pi$ a number $c(V) \in \{ 1, 2 \}$). For such
$\pi$ and $c$ one can talk about ``mixed nested functionals'' of the
form
\begin{equation}  \label{eqn:3.91}
( F,G )^{ [ \pi, c ] }
\end{equation}
where $F,G$ are two series in $\Serie ( \cB )$. The object in
(\ref{eqn:3.91}) is a multilinear functional from
$\cB^{n-1}$ to $\cB$, constructed by the same method as in
Definition \ref{def:3.5}, but which uses some terms of $F$ and
some terms of $G$ (depending on what is the colour of the
corresponding block of $\pi$ -- blocks of colour 1 go with $F$,
and blocks of colour 2 go with $G$).

Concrete example: take again the case when
$\pi = \{ \, \{ 1,3,4 \}, \{ 2 \}, \{ 5,6 \} \, \} \in NC(6)$,
as we illustrated at the end of Remark \ref{rem:3.6}. Suppose that
$\pi$ is coloured so that $c( \, \{ 1,3,4 \} \, ) = 1$, while
$c( \, \{ 2 \} \, ) = c( \, \{ 5,6 \} \, ) = 2$. Then
the list of words $W_1, \ldots , W_6$ that we use is changed
in the respect that we now have

\begin{center}
$W_1$ = ``$F^{ [3] } ($'',
$W_2$ = ``$G^{ [1] }$'',
$W_5$ = ``$G^{ [2] } ($'',
\end{center}

\noindent
(while $W_4$ and $W_6$ still are right parentheses, and $W_3$ is a
comma). The string of symbols $W_1 b_1 \cdots W_5 b_5 W_6$ thus
gives the formula
\[
( F, G )^{ [ \pi, c ] } (b_1, \ldots , b_5) =
F^{ [3] } ( \, b_1 G^{ [1] } b_2 \, , \, b_3 \, ) \, b_4 \,
G^{ [2] } ( b_5 ), \ \ \forall \, b_1, \ldots , b_5 \in \cB .
\]

Clearly, the same idea of colouring can be used when more than two
colours are involved, in order to define for instance mixed linear
functionals of the form
\begin{equation}  \label{eqn:3.92}
( F,G,H )^{ [ \pi, c ] }
\end{equation}
where $F,G,H$ are series in $\Serie ( \cB )$, $\pi$ is in $NC(n)$,
and $c : \pi \to \{ 1,2,3 \}$ is a colouring of $\pi$ in three
colours.
\end{remark}

\begin{remark}   \label{rem:3.7}
(Interval partitions.)

At various points in the paper we will need to look at functionals
$F^{ [ \pi ] }$ (as introduced in Definition \ref{def:3.5}) in
the special, simpler, case when $\pi$ is an interval partition. We
record here the formula that is relevant for such a special case.

A partition $\pi = \{ V_1, \ldots , V_k \}$ of $\{ 1, \ldots , n \}$
is said to be an interval partition when every block $V_i$ is of
the form $[p,q] \cap \bZ$ for some $1 \leq p \leq q \leq n$. The
set of all interval partitions of $\{ 1, \ldots , n \}$ will be
denoted as $\Int (n)$. It is clear that $\Int (n) \subseteq NC(n)$,
but it is occasionally preferable to think of $\Int (n)$ as of a
partially ordered set in its own right, with partial order ``$\leq$''
still given by reverse refinement. It is easily seen that
$( \Int (n) , \leq )$ is then isomorphic to the partially ordered
set of subsets of $\{ 1, \ldots , n-1 \}$.

Now let $F$ be in $\Serie ( \cB )$ and let $\pi$ be a partition
in $\Int (n)$. Let us write explicitly
$\pi = \{ V_1, \ldots , V_k \}$, with the blocks $V_i$ picked
such that $\min (V_1) < \min (V_2) < \cdots < \min (V_k)$. Consider
the numbers $1 \leq q_1 < q_2 < \cdots < q_k =n$ obtained by
putting
\[
q_i = | V_1 | + | V_2 | + \cdots + | V_i |, \ \
1 \leq i \leq k.
\]
It is then immediate that the multilinear functional
$F^{ [ \pi ] } : \cB^{n-1} \to \cB$ acts by
\[
F^{ [ \pi ] } (b_1, \ldots , b_{n-1} ) =
F^{ [q_1] } \bigl( b_1, \ldots , b_{q_1 -1} \bigr) b_{q_1} \times
\]
\[
\times F^{ [q_2 - q_1] }
\bigl( b_{q_1 + 1}, \ldots , b_{q_2 -1} \bigr) b_{q_2} \cdots
b_{q_{k-1}} F^{ [q_k - q_{k-1}] }
\bigl( b_{q_{k-1} + 1}, \ldots , b_{q_k -1} \bigr),
\]
for $b_1, \ldots , b_{n-1} \in \cB$.
\end{remark}

\section{R-transform, B-transform, and convolution powers}
\label{Section:Transforms}

\setcounter{equation}{0}
\setcounter{theorem}{0}

In Section 2 we saw that the map $\mu \mapsto M_{\mu}$ is a bijection
from $\Dist ( \cB )$ onto $\Serie ( \cB )$. In this section we
will review two other important bijections from $\Dist ( \cB )$ onto
$\Serie ( \cB )$, which associate to every $\mu$ its $R$-transform
and its $B$-transform. Both these bijections can be treated
combinatorially by using some bijective self-maps of
$\Serie ( \cB )$ -- one bijection which connects the moment series
$M_{\mu}$ to the $R$-transform $R_{\mu}$, and another bijection which
connects the moment series $M_{\mu}$ to the $B$-transform $B_{\mu}$,
$\mu \in \Dist ( \cB )$. For lack of better names, we
will use the notations $\RtoM$ and respectively $\EtatoM$ for
these two self-maps of $\Serie ( \cB )$. It is useful that $\RtoM$
and $\EtatoM$ can be introduced by explicit summation formulas which
don't make reference to distributions, as follows.

$\ $

\begin{notation}   \label{def:4.2}
For $F \in \Serie ( \cB )$ we denote by $\RtoM \,  (F)$ the
series $G \in \Serie ( \cB )$ with terms defined as follows:
\begin{equation}   \label{eqn:4.21}
G^{ [n] } := \sum_{ \pi \in NC(n) } F^{ [ \pi ] },
\ \ \forall \, n \geq 1.
\end{equation}
\end{notation}

\begin{remark}  \label{rem:4.3}
By making appropriate use of Lemma \ref{lemma:3.8}, one finds
that Equation (\ref{eqn:4.21}) extends to the formula
\begin{equation}   \label{eqn:4.31}
G^{ [ \rho ] } = \sum_{  \begin{array}{c}
{\scriptstyle \pi \in NC(n),} \\
{\scriptstyle \pi \leq \rho}
\end{array}  } \  F^{ [ \pi ] },
\end{equation}
holding for all $n \geq 1$ and $\rho \in NC(n)$. One then
invokes the M\"obius inversion formula for the poset $NC(n)$ in
order to invert (\ref{eqn:4.31}); this leads to the formula
\begin{equation}   \label{eqn:4.32}
F^{ [ \rho ] } = \sum_{  \begin{array}{c}
{\scriptstyle \pi \in NC(n),} \\
{\scriptstyle \pi \leq \rho}
\end{array}  } \  \Moeb ( \pi, \rho ) \, G^{ [ \pi ] },
\end{equation}
holding for $n \geq 1$ and $\rho \in NC(n)$, and where ``Moeb''
stands for the M\"obius function of $NC(n)$ (see e.g. the review
of Moeb made in Lecture 10 of \cite{NS2006}). In particular, the terms
of the series $F$ can be recaptured via the formula
\begin{equation}   \label{eqn:4.33}
F^{ [n] } = \sum_{ \pi \in NC(n) }
\Moeb ( \pi, 1_n ) \, G^{ [ \pi ] }, \ \ \forall \, n \geq 1.
\end{equation}
$ $From here one immediately finds that the map
$\RtoM : \Serie ( \cB ) \to \Serie ( \cB )$ is a bijection,
having for inverse the map $G \mapsto F$ described by
Equation (\ref{eqn:4.33}).
\end{remark}

\begin{definition}   \label{def:4.4}
For every distribution $\mu \in \Dist ( \cB )$, the series
\begin{equation}  \label{eqn:4.41}
R_{\mu} :=  \RtoM^{-1} ( \, M_{\mu} \, ) \in \Serie ( \cB )
\end{equation}
is called the {\em $R$-transform} of $\mu$.
\end{definition}

The construction of the bijection $\EtatoM$ and the definition
of the $B$-transform associated to a distribution
$\mu \in \Dist ( \cB )$ are done in exactly the same way, but
where now instead of $NC(n)$ one uses the smaller poset
$\Int (n)$ of interval partitions.

\begin{notation}   \label{def:4.5}
For $F \in \Serie ( \cB )$ we denote by $\EtatoM \ (F)$ the
series $G \in \Serie ( \cB )$ with terms defined as follows:
\begin{equation}   \label{eqn:4.51}
G^{ [n] } := \sum_{ \pi \in \Int (n) } F^{ [ \pi ] },
\ \ \forall \, n \geq 1.
\end{equation}
\end{notation}

\begin{remark}   \label{rem:4.6}
Exactly as in Remark \ref{rem:4.3}, one sees that Equation
(\ref{eqn:4.51}) extends to the formula
\begin{equation}   \label{eqn:4.61}
G^{ [ \rho ] } = \sum_{  \begin{array}{c}
{\scriptstyle \pi \in \Int (n),} \\
{\scriptstyle \pi \leq \rho}
\end{array}  } \  F^{ [ \pi ] },
\end{equation}
holding for all $n \geq 1$ and $\rho \in \Int (n)$. One then
uses M\"obius inversion in the poset $\Int (n)$
in order to invert (\ref{eqn:4.61}). Since $\Int (n)$ is
isomorphic to a Boolean poset, this inversion process is in fact
quite straightforward, and leads to the formula
\begin{equation}   \label{eqn:4.62}
F^{ [ \rho ] } = \sum_{  \begin{array}{c}
{\scriptstyle \pi \in \Int (n),} \\
{\scriptstyle \pi \leq \rho}
\end{array}  } \  (-1)^{ | \pi | -  | \rho | } \, G^{ [ \pi ] },
\end{equation}
holding for $n \geq 1$ and $\rho \in \Int (n)$. In particular, the
terms of the series $F$ are recaptured from $G$ via the formula
\begin{equation}   \label{eqn:4.63}
F^{ [n] } = \sum_{ \pi \in \Int (n) }
(-1)^{ | \pi | - 1 } \, G^{ [ \pi ] }, \ \ \forall \, n \geq 1.
\end{equation}
In this way it becomes clear that the map
$\EtatoM : \Serie ( \cB ) \to \Serie ( \cB )$ is a bijection,
having for inverse the map $G \mapsto F$ described by
Equation (\ref{eqn:4.63}).
\end{remark}

\begin{definition}   \label{def:4.7}
For every distribution $\mu \in \Dist ( \cB )$, the series
\begin{equation}  \label{eqn:4.71}
B_{\mu} := \EtatoM^{-1} ( \, M_{\mu} \, ) \in \Serie ( \cB )
\end{equation}
is called the {\em $B$-transform} of $\mu$. Note that in many sources
this would be called ``the $\eta$-series of $\mu$'', but we reserve
the letter $\eta$ for maps and covariances.
\end{definition}

Now, with definitions laid out as above, there is no problem to
define generalized convolution powers with respect to
$\boxplus$ and $\uplus$, as follows.

\begin{definition}   \label{def:4.9}
Let $\mu$ be a distribution in $\Dist ( \cB )$, and let
$\alpha : \cB \to \cB$ be a linear map.

$1^o$ We will denote by $\mu^{ \boxplus \alpha }$ the
distribution in $\Dist ( \cB )$ which is uniquely determined by the
fact that its $R$-transform is
\begin{equation}   \label{eqn:4.91}
R_{ \mu^{ \boxplus \alpha } } = \alpha \circ R_{\mu} .
\end{equation}

$2^o$ We will denote by $\mu^{ \uplus \alpha }$ the
distribution in $\Dist ( \cB )$ which is uniquely determined by the
fact that its $B$-series is
\begin{equation}   \label{eqn:4.92}
B_{ \mu^{ \uplus \alpha } } = \alpha \circ B_{\mu} .
\end{equation}
\end{definition}

\begin{remark}   \label{rem:4.10}
Directly from the above definition, we have semigroup
properties for each of the two types of convolution powers. More
precisely: for every $\mu \in \Dist ( \cB )$ and every linear maps
$\alpha , \beta : \cB \to \cB$ we have
\[
\Bigl( \, \mu^{ \boxplus \alpha } \, \Bigr)^{\boxplus \beta}
= \mu^{\boxplus ( \beta \circ \alpha ) }
\mbox{ and }
\Bigl( \, \mu^{ \uplus \alpha } \, \Bigr)^{\uplus \beta}
= \mu^{\uplus ( \beta \circ \alpha ) }.
\]
\end{remark}

\section{The bijection connecting R-transform to B-transform}

\setcounter{equation}{0}
\setcounter{theorem}{0}

We continue to use the framework from the preceding two sections.
We will now examine another bijective self-map of $\Serie ( \cB )$,
which combines the two bijections $R_{\mu} \mapsto M_{\mu}$ and
$B_{\mu} \mapsto M_{\mu}$ discussed in Section 4, and acts by the
prescription that
\[
R_{\mu} \mapsto B_{\mu}, \ \ \mu \in \Dist ( \cB ).
\]
It is useful that this bijection can be
introduced by a direct combinatorial formula, without making
explicit reference to the transforms $R$ and $B$. The direct
formula will be given in Definition \ref{def:5.2}, then the
``$R_{\mu} \mapsto B_{\mu}$'' property will be derived in
Proposition \ref{prop:5.4}.

In order to state Definition \ref{def:5.2}, we first review a few
more details of the combinatorics of $NC(n)$.

\begin{remark}     \label{rem:5.1}
(The partial order $\ll$ in $NC(n)$.)
For $\pi , \rho \in NC(n)$ we will write
``$\pi \ll \rho$'' to mean that $\pi \leq \rho$ and that, in
addition, the following condition is fulfilled:
\begin{equation}     \label{eqn:5.11}
\left\{  \begin{array}{l}
\mbox{For every block $W$ of $\rho$ there exists a block}  \\
\mbox{$V$ of $\pi$ such that $\min (W), \max (W) \in V$.}
\end{array}  \right.
\end{equation}
It is immediately verified that ``$\ll$'' is a partial
order relation on $NC(n)$. It is much coarser than the reverse
refinement order, and differs from it in several respects. In
particular, observe that $( NC(n) , \ll )$ has many maximal
elements: they are precisely the interval partitions, and for
every $\pi \in NC(n)$ there exists a unique interval partition
$\rho$ such that $\pi \ll \rho$. (The blocks
of this unique interval partition $\rho$ are in some sense the
convex hulls of the outer blocks of $\pi$.)

A special role in the subsequent calculation will be played by
the partitions $\pi \in NC(n)$ such that $\pi \ll 1_n$. The
latter inequality means (obvious from the definition) that
$\pi$ has a unique outer block, which will be denoted by $V_o(\pi)$.
\end{remark}

\begin{definition}   \label{def:5.2}
We define a map $\Reta : \Serie ( \cB ) \to \Serie ( \cB )$ in
the following way: for every $F \in \Serie ( \cB )$ we put
$\Reta (F)$ to be the series $G \in \Serie ( \cB )$ with
\begin{equation}    \label{eqn:5.21}
G^{[n]} = \sum_{ \begin{array}{c}
{\scriptstyle \pi \in NC(n)} \\
{\scriptstyle \pi \ll 1_n}
\end{array} } \ F^{[ \, \pi \, ]}, \ \ \forall \, n \geq 1.
\end{equation}
\end{definition}

\begin{remark}   \label{rem:5.3}

$1^{o}$ It is useful to invoke once again Lemma \ref{lemma:3.8} in
order to note that Equation (\ref{eqn:5.21}) extends to the formula
\begin{equation}    \label{eqn:5.31}
G^{[ \rho ]} = \sum_{ \begin{array}{c}
{\scriptstyle \pi \in NC(n)} \\
{\scriptstyle \pi \ll \rho}
\end{array} } \ F^{[ \pi ]},
\end{equation}
holding for every $n \geq 1$ and every $\rho \in NC(n)$.

$2^{o}$ The notation ``$\, \Reta \, $'' used in Definition
\ref{def:5.2} is meant to be suggestive of the fact that we are
dealing with the map which ``converts the $R$-transform into the
$B$-transform'' (as will be proved in the next proposition).
This map and its properties were previously studied in
\cite{BN2008}, \cite{BN2009}, in the framework of multi-variable distributions
over $\bC$. A comment on notation: the papers \cite{BN2008}, \cite{BN2009} use
the fairly widespread name of ``$\eta$-series'' for the $B$-transform
of a distribution $\mu$; as a consequence, the map which connects the
transforms is called there by the more sonorous name of ``Reta'',
rather than $\Reta$.
\end{remark}

\begin{proposition}   \label{prop:5.4}
The maps $\RtoM$, $\EtatoM$ from Section 4 and the map
$\Reta$ from Definition \ref{def:5.2} are related by the formula
\begin{equation}  \label{eqn:5.41}
\EtatoM \circ \Reta = \RtoM .
\end{equation}
As a consequence, it follows that $\Reta$ is a bijection from
$\Serie ( \cB )$ onto itself, and has the property that
\begin{equation}  \label{eqn:5.42}
\Reta ( R_{\mu} ) = B_{\mu},
\ \ \forall \, \mu \in \Dist ( \cB ).
\end{equation}
\end{proposition}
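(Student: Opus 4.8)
The plan is to prove the identity (\ref{eqn:5.41}) at the purely combinatorial level of $\cB$-series, by comparing the $n$-th terms of $\EtatoM \circ \Reta \, (F)$ and $\RtoM \, (F)$ for an arbitrary $F \in \Serie(\cB)$, and then to deduce both the bijectivity of $\Reta$ and the transform property (\ref{eqn:5.42}) as formal consequences. The key observation that drives the whole argument is a decomposition of $NC(n)$: every $\pi \in NC(n)$ determines a unique interval partition $\rho = \rho(\pi) \in \Int(n)$ with $\pi \ll \rho$, namely the partition whose blocks are the ``convex hulls of the outer blocks of $\pi$'', as recorded in Remark~\ref{rem:5.1}. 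Thus one gets a partition of the set $NC(n)$ into fibers indexed by $\rho \in \Int(n)$, where the fiber over $\rho$ is exactly $\{\pi \in NC(n) \mid \pi \ll \rho\}$. This is the main structural fact I would need, and establishing it cleanly (that $\rho(\pi)$ is well-defined, lies in $\Int(n)$, and that $\pi \ll \rho$ forces $\rho = \rho(\pi)$) is where I expect the real work to be; it amounts to a careful inspection of the definition of $\ll$ in (\ref{eqn:5.11}) together with the non-crossing property.

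Granting that, the computation is immediate. Fix $F \in \Serie(\cB)$ and set $G = \Reta(F)$, so that by Definition~\ref{def:5.2} and its extension (\ref{eqn:5.31}) we have $G^{[\sigma]} = \sum_{\pi \in NC(n),\ \pi \ll \sigma} F^{[\pi]}$ for every $\sigma \in NC(n)$; in particular this holds for every $\sigma \in \Int(n)$. Then by Notation~\ref{def:4.5},
\[
\bigl( \EtatoM (G) \bigr)^{[n]} = \sum_{\rho \in \Int(n)} G^{[\rho]} = \sum_{\rho \in \Int(n)} \ \sum_{\substack{\pi \in NC(n) \\ \pi \ll \rho}} F^{[\pi]} = \sum_{\pi \in NC(n)} F^{[\pi]} = \bigl( \RtoM (F) \bigr)^{[n]},
\]
where the third equality is precisely the fiber decomposition of $NC(n)$ just described (each $\pi$ is counted exactly once, in the fiber over $\rho(\pi)$), and the last equality is Notation~\ref{def:4.2}. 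Since $n \geq 1$ was arbitrary, this proves (\ref{eqn:5.41}).

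The consequences follow formally. By Remark~\ref{rem:4.3} the map $\RtoM$ is a bijection of $\Serie(\cB)$, and by Remark~\ref{rem:4.6} the map $\EtatoM$ is a bijection of $\Serie(\cB)$; from $\EtatoM \circ \Reta = \RtoM$ we therefore get $\Reta = \EtatoM^{-1} \circ \RtoM$, which exhibits $\Reta$ as a composition of bijections, hence itself a bijection of $\Serie(\cB)$ onto itself. Finally, for $\mu \in \Dist(\cB)$ we apply this to $F = R_\mu$: using Definitions~\ref{def:4.4} and~\ref{def:4.7}, together with (\ref{eqn:5.41}),
\[
\EtatoM \bigl( \Reta ( R_\mu ) \bigr) = \RtoM ( R_\mu ) = M_\mu = \EtatoM ( B_\mu ),
\]
and since $\EtatoM$ is injective we conclude $\Reta(R_\mu) = B_\mu$, which is (\ref{eqn:5.42}). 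The only genuinely nontrivial ingredient is the fiber decomposition of $NC(n)$ over $\Int(n)$ via the map $\pi \mapsto \rho(\pi)$; everything else is bookkeeping with the summation formulas already set up in Sections~4 and~5.
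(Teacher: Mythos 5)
Your proposal is correct and follows essentially the same route as the paper: both compute the $n$-th term of $\EtatoM \circ \Reta\,(F)$ via the extension (\ref{eqn:5.31}), collapse the double sum using the fact from Remark~\ref{rem:5.1} that each $\pi \in NC(n)$ satisfies $\pi \ll \rho$ for a unique $\rho \in \Int(n)$, and then deduce bijectivity of $\Reta$ and Equation (\ref{eqn:5.42}) formally from the bijectivity of $\RtoM$ and $\EtatoM$. The paper likewise cites Remark~\ref{rem:5.1} for the fiber decomposition rather than re-proving it, so no further work is expected of you there.
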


\begin{proof}
For the verification of (\ref{eqn:5.41}) let us consider a series
$F \in \Serie ( \cB )$ and let us make the notations
\[
\Reta (F) =: G, \mbox{ then } \EtatoM (G) =: H.
\]
We have to prove that $H$ is equal to $\RtoM (F)$. In order to
verify this, we pick a positive integer $n$ and we calculate:
\begin{align*}
H^{ [n] }
& = \sum_{ \rho \in \Int (n)} \, G^{ [ \rho ] }
  \ \mbox{ (by the definition of $\EtatoM$)}                   \\
& = \sum_{ \rho \in \Int (n)} \, \Bigl( \,
    \sum_{ \begin{array}{c}
{\scriptstyle \pi \in NC(n)} \\
{\scriptstyle \pi \ll \rho}
\end{array} } \ F^{[ \pi ]} \, \Bigr)
  \ \mbox{ (by Equation (\ref{eqn:5.31})). }
\end{align*}
We next observe that the double sum which has appeared is in fact
just a sum over $\pi \in NC(n)$; this is due to the observation,
recorded in Remark \ref{rem:5.1}, that for every
$\pi \in NC(n)$ there exists a unique $\rho \in \Int (n)$
such that $\pi \ll \rho$. So then our calculation for $H^{ [n] }$
becomes
\[
H^{ [n] } = \sum_{ \pi \in NC(n) } \ F^{ [ \pi ] }
= \Bigl( \, \RtoM (F) \, \Bigr)^{ [n] },
\]
and the equality $\RtoM (F) = H$ follows.

Since we saw in Section 4 that each of the maps
$\RtoM$ and $\EtatoM$ is a bijection from $\Serie ( \cB )$ to
itself, the formula obtained in (\ref{eqn:5.41}) implies that
$\Reta = \EtatoM^{-1} \circ \RtoM$ has this property as well.

Finally, in order to obtain Equation (\ref{eqn:5.42}) we write
\begin{align*}
\EtatoM \Bigl( \, \Reta \, ( R_{\mu} ) \Bigr)
& = \RtoM ( R_{\mu} )
  \ \mbox{ (by (\ref{eqn:5.41})) }                    \\
& = M_{\mu}
  \ \mbox{ (by the definition of $\RtoM$) }           \\
& = \EtatoM ( B_{\mu} )
  \ \mbox{ (by the definition of $\EtatoM$). }
\end{align*}
Since $\EtatoM$ is one-to-one, it follows that
$\Reta ( R_{\mu} ) = B_{\mu}$, as claimed.
\end{proof}

In the discussion about the transformations $\bB_{\alpha}$ of
next section, we will need to extend Definition \ref{def:5.2}
to a family of bijective maps
$\Reta_{\alpha} : \Serie ( \cB ) \to \Serie ( \cB )$, where
$\alpha$ runs in the set of linear transformations from
$\cB$ to $\cB$. The original $\Reta$ from Definition \ref{def:5.2}
will correspond to the special case when $\alpha$ is the identity
transformation of $\cB$. The definition of
$\Reta_{\alpha} (F)$ is obtained by substituting $\alpha \circ F$
instead of $F$ on the right-hand side of Equation (\ref{eqn:5.21}),
but where we make one important exception to this substitution rule:
for every $\pi \in NC(n)$ such that $\pi \ll 1_n$, the unique outer
block of $\pi$ still carries with it a term of the series $F$
(not substituted by the corresponding term of $\alpha \circ F$).
Because of this exception, the formal definition of
$\Reta_{\alpha} (F)$ will thus be phrased in terms of colourings
of non-crossing partitions, as discussed in Remark \ref{rem:3.9}
-- specifically, we will use the colouring of $\pi \ll 1_n$ where
the unique outer block $V_o(\pi)$ of $\pi$ is coloured differently from the
other blocks.

\begin{definition}    \label{def:5.5}

$1^o$ Let $\pi$ be a partition in $NC(n)$ such that
$\pi \ll 1_n$
We will denote by $\oo_{\pi}$ the colouring of $\pi$ defined by
\[
\oo_{\pi} (V) = \left\{
\begin{array}{lc}
1, & \mbox{if $V=V_o(\pi)$}    \\
2, & \mbox{if $V$ is a block of $\pi$ such that $V \neq V_o(\pi)$.}
\end{array}  \right.
\]

$2^o$ Let $\alpha : \cB \to \cB$ be a linear transformation. We
define a map $\Reta_{\alpha} : \Serie ( \cB ) \to \Serie ( \cB )$
in the following way: for every $F \in \Serie ( \cB )$ we put
$\Reta_{\alpha} (F)$ to be the series $G \in \Serie ( \cB )$ with
\begin{equation}    \label{eqn:5.51}
G^{[n]} = \sum_{ \begin{array}{c}
{\scriptstyle \pi \in NC(n)} \\
{\scriptstyle \pi \ll 1_n}
\end{array} } \ (F, \alpha \circ F)^{[ \, \pi, \, \oo_{\pi} ]},
\ \ \forall \, n \geq 1
\end{equation}
(and where the right-hand side of Equation (\ref{eqn:5.51}) follows
the notations introduced in Remark \ref{rem:3.9}).
\end{definition}

\begin{remark}   \label{rem:5.6}

$1^o$ In order to get a better idea about how $\Reta_{\alpha}$ works,
let us write down explicitly what Equation (\ref{eqn:5.51}) becomes
for some small values of $n$. We have
\[
G^{[1]} = F^{[1]}, \ \ G^{[2]} (b) = F^{[2]} (b),
\]
\[
G^{[3]} (b_1, b_2) = F^{[3]} (b_1, b_2) +
F^{[2]} \bigl( \, b_1 \, \alpha ( \, F^{[1]} \, ) \, b_2 \, \bigr) ,
\]
\begin{align*}
G^{[4]} (b_1, b_2, b_3)
& = F^{[4]} (b_1, b_2, b_3)
  + F^{[3]} \bigl( \, b_1 \, \alpha ( \, F^{[1]} \, ) \, b_2
    , b_3 \, \bigr)                  \\
& + F^{[3]} \bigl( \, b_1 , b_2 \, \alpha ( \, F^{[1]} \, ) \, b_3
    \, \bigr)
  + F^{[2]} \bigl( \, b_1 \, ( \alpha \circ F^{[2]} ) (b_2) \, b_3
    \, \bigr)                        \\
& + F^{[2]} \bigl( \, b_1 \, \alpha ( \, F^{[1]} \, ) \, b_2
    \, \alpha ( \, F^{[1]} \, ) \, b_3 \, \bigr) .
\end{align*}

$2^o$ From the above definitions it is immediate that the map
$\Reta$ from Definition \ref{def:5.2} becomes $\Reta_1$, where
$1: \cB \to \cB$ is the identity. Let us also note that if
we denote by $0: \cB \to \cB$ the map which is identically equal
to $0$, then $\Reta_0$ is the identity map on $\Serie ( \cB )$
(indeed, in the sum on the right-hand side of Equation
(\ref{eqn:5.51}) the only term which survives is the one indexed by
$1_n$, and thus we get $G^{[n]} = F^{[n]}$ for every $n \geq 1$).

$3^o$ Clearly, the definition of $\Reta_{\alpha}$ was made in such
a way that we have
\begin{equation}   \label{eqn:5.61}
\alpha \circ \Reta_{\alpha} (F) = \Reta ( \alpha \circ F),
\ \ \forall \, F \in \Serie ( \cB ).
\end{equation}
In the case when $\alpha$ is invertible, one can thus introduce
$\Reta_{\alpha}$ by the simpler formula
\begin{equation}   \label{eqn:5.62}
\Reta_{\alpha} (F) = \alpha^{-1} \circ \Reta ( \alpha \circ F),
\ \ F \in \Serie ( \cB ).
\end{equation}
The summation formula used in Equation (\ref{eqn:5.51}) is more
tortuous, but has the merit that it works without assuming that
$\alpha$ is invertible.

$4^o$ The main point we want to make about the maps $\Reta_{\alpha}$
is that they form a commutative semigroup under composition. This
is stated precisely in Proposition \ref{prop:5.9} below. In the
proof of Proposition \ref{prop:5.9} we will use an important
property of the partial order $\ll$, reviewed in Proposition
\ref{prop:5.8}, which essentially says that $\ll$ has ``some
Boolean lattice features'' embedded into it.
\end{remark}

\begin{definition}   \label{def:5.7}
Let $\pi, \rho$ be partitions in $NC(n)$ such that
$\pi \ll \rho$. A block $V$ of $\pi$ is said to be
{\em $\rho$-special} when there exists
a block $W$ of $\rho$ such that $\min (V) = \min (W)$ and
$\max (V) = \max (W)$.
\end{definition}

\begin{proposition}   \label{prop:5.8}
Let $\pi \in NC(n)$ be such that $\pi \ll 1_n$, and consider
the set of partitions
\begin{equation}   \label{eqn:5.81}
\{ \rho \in NC(n) \mid \pi \ll \rho \ll 1_n \} .
\end{equation}
Then
$\rho \mapsto \{ V \in \pi \mid V \mbox{ is $\rho$-special} \}$
is a one-to-one map from the set (\ref{eqn:5.81}) to the set of
subsets of $\pi$. The image of this map is equal to
$\{ \fV \subseteq \pi \mid \fV \ni V_o(\pi) \}$.
\end{proposition}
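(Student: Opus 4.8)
The plan is to prove the statement by producing an explicit two-sided inverse for the proposed map, which I will call $\Psi : \rho \mapsto \{ V \in \pi \mid V \text{ is } \rho\text{-special} \}$. First I would fix $\pi \in NC(n)$ with $\pi \ll 1_n$, so that $\pi$ has a unique outer block $V_o(\pi)$. The key observation is that if $\pi \ll \rho \ll 1_n$, then $\rho$ is itself forced to have a unique outer block (namely the one containing $\min(1_n)=1$ and $\max(1_n)=n$), and the outer block $V_o(\pi)$ of $\pi$ must be contained in that outer block of $\rho$ and share its min and max; hence $V_o(\pi)$ is always $\rho$-special, so indeed $V_o(\pi) \in \Psi(\rho)$. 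This shows the image is contained in $\{ \fV \subseteq \pi \mid \fV \ni V_o(\pi) \}$.

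Next I would construct the inverse. Given a subset $\fV \subseteq \pi$ with $V_o(\pi) \in \fV$, I want to build the unique $\rho$ with $\pi \ll \rho \ll 1_n$ whose set of $\rho$-special blocks is exactly $\fV$. The idea is: the elements $\min(V), \max(V)$ for $V \in \fV$ determine a collection of ``endpoints'', and $\rho$ should be the coarsest non-crossing partition refined by $\pi$ whose blocks have these prescribed outer endpoints. Concretely, for each $V \in \fV$ the block of $\rho$ containing $V$ should be the union of $V$ together with all blocks of $\pi$ that are ``nested inside the interval spanned by $V$ but not inside any smaller spanned interval coming from a $\fV$-block''. Because $\pi$ is non-crossing and $V_o(\pi) \in \fV$, these spanned intervals are nested or disjoint, so this recipe is well-defined and produces a partition $\rho \in NC(n)$ with $\pi \leq \rho$; one checks directly that $\pi \ll \rho$ (the endpoints of each $\rho$-block sit inside one $\pi$-block, by construction), that $\rho \ll 1_n$ (since $V_o(\pi) \in \fV$ guarantees $\rho$ has a single outer block), and that the $\rho$-special blocks are precisely the blocks of $\fV$. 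Call this $\rho = \Phi(\fV)$.

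Then I would verify $\Psi \circ \Phi = \mathrm{id}$ and $\Phi \circ \Psi = \mathrm{id}$. The first is essentially built into the construction: by design the blocks of $\pi$ whose min and max match those of a $\rho$-block are exactly the elements of $\fV$. For the second, starting from $\rho$ with $\pi \ll \rho \ll 1_n$, one checks that feeding $\fV = \Psi(\rho)$ back into $\Phi$ reconstructs $\rho$; the point is that each block $W$ of $\rho$ has a unique $\rho$-special $\pi$-block $V$ inside it (uniqueness because $\pi \ll \rho$ forces $\min(W),\max(W)$ into a single $\pi$-block, and non-crossingness prevents two such), that $\pi$-block lies in $\fV$, and $W$ is recovered as the union of $V$ with the $\pi$-blocks nested strictly between $\min(W)$ and $\max(W)$ that are not captured by a deeper $\fV$-block. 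This is a bijectivity bookkeeping argument using only the non-crossing structure.

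The main obstacle I anticipate is the combinatorial bookkeeping in defining $\Phi$ cleanly — precisely specifying, for a given $\fV$, which $\pi$-blocks get absorbed into which $\rho$-block, and proving the result is both non-crossing and satisfies $\pi \ll \rho \ll 1_n$. The cleanest way around this is probably to argue by induction on $n$ (or on the number of blocks of $\pi$), peeling off the outer block $V_o(\pi)$: restrict attention to each ``gap'' of $V_o(\pi)$, where $\pi$ induces a smaller non-crossing partition, observe that membership of the induced outer blocks in $\fV$ is unconstrained there, and apply the inductive hypothesis to get a Boolean-lattice-type product decomposition — which is exactly the ``Boolean lattice features embedded in $\ll$'' that Remark~\ref{rem:5.6} advertises. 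Alternatively, one can phrase everything in terms of the poset isomorphism $(\Int(n),\leq) \cong$ subsets of $\{1,\dots,n-1\}$ from Remark~\ref{rem:3.7}, transporting the interval $[\pi, 1_n]_{\ll}$ to a Boolean lattice directly.
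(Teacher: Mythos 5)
Your construction is correct, but note that the paper itself offers no argument for Proposition~\ref{prop:5.8} at all: it simply refers the reader to Proposition~2.13 and Remark~2.14 of \cite{BN2008}. So what you propose is not so much a different route as a self-contained proof of a fact the paper outsources, and your inverse map is essentially the one underlying the cited result. Concretely, your recipe amounts to assigning each block $U$ of $\pi$ to the \emph{minimal} $V\in\fV$ whose spanned interval $[\min V,\max V]$ contains $[\min U,\max U]$, and letting the blocks of $\rho=\Phi(\fV)$ be the unions of the fibres of this assignment; all the checks you defer do go through, and they all rest on one fact: the spanned intervals of blocks of a non-crossing partition are pairwise nested or disjoint. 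This makes the ``minimal host'' well defined (the candidates form a chain, with $V_o(\pi)$ on top since $V_o(\pi)\in\fV$); it gives $\min$ and $\max$ of each constructed class equal to those of its $\fV$-block, whence $\pi\ll\rho\ll 1_n$ and the fact that the $\rho$-special blocks are exactly $\fV$; and both the non-crossingness of $\rho$ and the identity $\Phi(\Psi(\rho))=\rho$ reduce to the observation that a block containing a point strictly inside $(\min V',\max V')$ and a point outside $[\min V',\max V']$ would cross $V'$ (respectively the corresponding block $W'$ of $\rho$). Two small corrections to your bookkeeping: uniqueness of the $\rho$-special $\pi$-block inside a given block $W$ of $\rho$ follows from disjointness of blocks (the element $\min W$ lies in only one block of $\pi$), not from non-crossingness; and the suggested shortcut via the isomorphism $(\Int(n),\leq)\cong$ subsets of $\{1,\dots,n-1\}$ does not apply as stated, since $\{\rho\mid\pi\ll\rho\ll 1_n\}$ is not an interval of $\Int(n)$ --- the direct argument (or your induction peeling off $V_o(\pi)$) is the way to go.
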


\vspace{10pt}

\noindent
For the proof of Proposition \ref{prop:5.8}, the reader is
referred to Proposition 2.13 and Remark 2.14 of \cite{BN2008}.

$\ $

\begin{proposition}    \label{prop:5.9}
For any linear transformations $\alpha, \beta : \cB \to \cB$,
one has that
\begin{equation}   \label{eqn:5.91}
\Reta_{\alpha}  \circ \Reta_{\beta} = \Reta_{\alpha + \beta}.
\end{equation}
\end{proposition}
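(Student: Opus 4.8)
The plan is to establish (\ref{eqn:5.91}) directly at the level of the summation formula (\ref{eqn:5.51}), by expanding the double composition into a sum over non-crossing partitions and then reorganizing it with the help of Proposition~\ref{prop:5.8}; commutativity of the resulting semigroup is then immediate from the symmetry of $\alpha+\beta$ in $\alpha$ and $\beta$. Fix $F \in \Serie(\cB)$ and set $G := \Reta_{\beta}(F)$ and $H := \Reta_{\alpha}(G)$; the goal is to show $H^{[n]} = \bigl( \Reta_{\alpha+\beta}(F) \bigr)^{[n]}$ for every $n \geq 1$.

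First I would record the coloured analogue of the extension formula (\ref{eqn:5.31}). Arguing exactly as in Remark~\ref{rem:5.3}$(1^{o})$ via Lemma~\ref{lemma:3.8}, one sees that for every $\rho \in NC(n)$ one has $G^{[\rho]} = \sum_{\pi \ll \rho} (F, \beta \circ F)^{[\pi, c]}$, where the colouring $c$ assigns to a block $V$ of $\pi$ the colour of $F$ if $V$ is $\rho$-special (in the sense of Definition~\ref{def:5.7}) and the colour of $\beta \circ F$ otherwise. Similarly, since applying $\alpha$ to $(F, \beta \circ F)^{[\sigma, \oo_{\sigma}]}$ merely replaces the outermost term $F^{[|V_{o}(\sigma)|]}$ by $\alpha \circ F^{[|V_{o}(\sigma)|]}$, the series $\alpha \circ G$ admits the analogous expansion with $\alpha \circ F$ in place of $F$ on the outer block of each inner partition. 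Substituting these two expansions into $H^{[n]} = \sum_{\rho \ll 1_n} (G, \alpha \circ G)^{[\rho, \oo_{\rho}]}$ and applying Lemma~\ref{lemma:3.8} once more yields
\[
H^{[n]} = \sum_{\rho \ll 1_n} \ \sum_{\pi \ll \rho} (F, \alpha \circ F, \beta \circ F)^{[\pi, c_{\pi, \rho}]},
\]
where $c_{\pi, \rho}$ attaches the series $F$ to the block $V_{o}(\pi)$, the series $\alpha \circ F$ to every $\rho$-special block of $\pi$ other than $V_{o}(\pi)$, and the series $\beta \circ F$ to every block of $\pi$ that is not $\rho$-special. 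Verifying this trichotomy is the heart of the bookkeeping: one checks that the blocks of $\pi$ arising as outer blocks of the inner partitions $\sigma$ are precisely the $\rho$-special ones, and that among these the one lying inside $V_{o}(\rho)$ is precisely $V_{o}(\pi)$; this uses $\pi \leq \rho$ together with the elementary observation that $\min(V) = \min(W)$ and $\max(V) = \max(W)$ whenever $V \subseteq W$ and $\min(W), \max(W) \in V$.

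Next I would interchange the two sums, fixing $\pi$ with $\pi \ll 1_n$ and letting $\rho$ range over $\{ \rho \in NC(n) \mid \pi \ll \rho \ll 1_n \}$. Since $V_{o}(\pi)$ is itself always $\rho$-special, Proposition~\ref{prop:5.8} tells us that $\rho \mapsto \{ V \in \pi \mid V \text{ is } \rho\text{-special} \}$ is a bijection from this set of $\rho$'s onto $\{ \fV \subseteq \pi \mid V_{o}(\pi) \in \fV \}$. Hence
\[
H^{[n]} = \sum_{\pi \ll 1_n} \ \sum_{\substack{\fV \subseteq \pi \\ V_{o}(\pi) \in \fV}} (F, \alpha \circ F, \beta \circ F)^{[\pi, c_{\fV}]},
\]
where $c_{\fV}$ puts $F$ on $V_{o}(\pi)$, $\alpha \circ F$ on the blocks of $\fV \setminus \{ V_{o}(\pi) \}$, and $\beta \circ F$ on the blocks outside $\fV$. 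For fixed $\pi$, the inner sum amounts to letting each non-outer block of $\pi$ independently contribute either $\alpha \circ F$ or $\beta \circ F$; by the multilinearity of the nested functionals $(\cdots)^{[\pi, c]}$ in the terms attached to the blocks of $\pi$ (each such functional being a composition of $\bC$-multilinear maps), this collapses to the single functional in which $V_{o}(\pi)$ carries $F$ and every other block carries $(\alpha + \beta) \circ F$, that is, to $(F, (\alpha + \beta) \circ F)^{[\pi, \oo_{\pi}]}$. Therefore $H^{[n]} = \sum_{\pi \ll 1_n} (F, (\alpha + \beta) \circ F)^{[\pi, \oo_{\pi}]} = \bigl( \Reta_{\alpha + \beta}(F) \bigr)^{[n]}$, proving (\ref{eqn:5.91}); and since $\alpha + \beta = \beta + \alpha$, the family $\{ \Reta_{\gamma} \}$ is a commutative semigroup under composition.

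I expect the principal obstacle to be the careful formulation and justification of the three-colour expansion of $H^{[n]}$ displayed above — that is, tracking through the two successive applications of Lemma~\ref{lemma:3.8} exactly which term of which of the three series $F$, $\alpha \circ F$, $\beta \circ F$ becomes attached to each block of the final partition $\pi$, and confirming that this assignment is governed precisely by the trichotomy ``$V = V_{o}(\pi)$ / $\rho$-special but not $V_{o}(\pi)$ / not $\rho$-special''. Once this is in place, the interchange of summations, the appeal to Proposition~\ref{prop:5.8}, and the multilinearity collapse are all routine.
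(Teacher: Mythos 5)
Your proposal is correct and follows essentially the same route as the paper's own proof: expand $H^{[n]}$ as a double sum over $\rho \ll 1_n$ and $\pi \ll \rho$ with the three-colour functional $(F,\alpha\circ F,\beta\circ F)^{[\pi,c_{\pi,\rho}]}$ governed by the trichotomy ``outer block / $\rho$-special but not outer / not $\rho$-special'', interchange the sums, parametrize $\{\rho \mid \pi \ll \rho \ll 1_n\}$ via Proposition~\ref{prop:5.8} by subsets of $\pi$ containing $V_o(\pi)$, and collapse by multilinearity to $(F,(\alpha+\beta)\circ F)^{[\pi,\oo_\pi]}$. Your explicit justification that the outer blocks of the inner partitions are exactly the $\rho$-special blocks, and that $\alpha$ only ever hits a term of $F$ (never of $\beta\circ F$), is precisely the bookkeeping the paper compresses into its ``moment's thought''.
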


\begin{proof}  Let $F$ be a series in $\Serie ( \cB )$, and let us
denote $\Reta_{\beta} (F) =: G$, $\Reta_{\alpha} (G) =: H$. We have
to prove that $H = \Reta_{\alpha + \beta} (F)$. For the whole proof
we fix a positive integer $n$, for which we will verify that the
$n$-th term of $H$ is equal to the $n$-th term of
$\Reta_{\alpha + \beta} (F)$.

$ $From the definition of $\Reta_{\beta}$ it follows that we have
\begin{equation}    \label{eqn:5.82}
H^{[n]} =
\sum_{   \begin{array}{c}
{\scriptstyle \rho \in NC(n), }   \\
{\scriptstyle \rho \ll 1_n }
\end{array} } \ ( G, \beta \circ G )^{ [ \rho, \oo_{\rho} ] } .
\end{equation}

Fix for the moment a partition $\rho \in NC(n)$ such that
$\rho \ll 1_n$. Let us write explicitly
$\rho = \{ V_1, \ldots , V_k \}$, where $V_1$ is the block
which contains $1,n$.
Then $( G, \beta \circ G )^{ [ \rho, \oo_{\rho} ] }$ is a
multilinear functional from $\cB^{n-1}$ to $\cB$, and its
explicit descriptions involves the functionals
\[
G^{ [ \ |V_1| \ ] } \mbox{ and }
\beta \circ G^{ [ \ |V_2| \ ] }, \ldots ,
\beta \circ G^{ [ \ |V_k| \ ] }
\]
nested in various ways (with each of these functionals used
exactly once, and with $G_{ |V_1| }$ appearing ``on the outside'').
Let us next replace each of
$G^{ [ \ |V_1| \ ] }, \ldots , G^{ [ \ |V_k| \ ] }$ from how they
are defined (in reference to the terms of $F$ and of
$\alpha \circ F$) in Equation (\ref{eqn:5.51}). This gives us the
functional $( G, \beta \circ G )^{ [ \rho ] }$ expressed as a sum
of the form
\[
( G, \beta \circ G )^{ [ \rho, \oo_{\rho} ] } =
\sum_{   \begin{array}{c}
{\scriptstyle \pi \in NC(n), }   \\
{\scriptstyle \pi \ll \rho }
\end{array} } \ \term_{\pi},
\]
where every functional $\term_{\pi} : \cB^{n-1} \to \cB$ is obtained
by nesting (in the way dictated by the nestings of blocks of $\pi$)
some terms of the series $F, \alpha \circ F$ and $\beta \circ F$.
(It is important to note here that, because of how our definitions
are run, we never get to deal with terms of the functional
$\alpha \circ \beta \circ F$.) A moment's thought shows in fact that
the precise formula for $\term_{\pi}$ is
\[
\term_{\pi} = ( F, \alpha \circ F, \beta \circ F)^{ [ \pi ,
c_{\pi, \rho} ] }
\]
where the colouring $c_{\pi , \rho}$ of $\pi$ goes in the way
described as follows:
we colour a general block $V$ of $\pi$ by putting
\[
c_{ \pi , \rho } (V) =
\left\{  \begin{array}{ll}
1, & \mbox{ if $V = V_o(\pi)$}                               \\
2, & \mbox{ if $V$ is $\rho$-special but $V \neq V_o(\pi)$}  \\
3, & \mbox{ if $V$ is not $\rho$-special.}
\end{array} \right.
\]

Returning to the formula for $H^{[n]}$, we have thus obtained that
\[
H^{ [n] }
= \sum_{   \begin{array}{c}
{\scriptstyle \rho \in NC(n), }   \\
{\scriptstyle \rho \ll 1_n }
\end{array} } \ \Bigl( \,
\sum_{   \begin{array}{c}
{\scriptstyle \pi \in NC(n), }   \\
{\scriptstyle \pi \ll \rho   }
\end{array} } \
( F, \alpha \circ F, \ \beta \circ F )^{ [ \pi , c_{\pi , \rho} ] }
\ \Bigr) .
\]
Change the order of summation, this becomes
\[
H^{ [n] } = \sum_{   \begin{array}{c}
{\scriptstyle \pi \in NC(n), }   \\
{\scriptstyle \pi \ll 1_n }
\end{array} } \ \Bigl( \,
\sum_{   \begin{array}{c}
{\scriptstyle \rho \in NC(n), }   \\
{\scriptstyle such \ that }       \\
{\scriptstyle \pi \ll \rho \ll 1_n }
\end{array} } \
( F, \alpha \circ F, \ \beta \circ F )^{ [ \pi , c_{\pi , \rho} ] }
\ \Bigr) .
\]
Fix $\pi$ and use the parametrization of
$\{ \rho \in NC(n) \mid \pi \ll \rho \ll 1_n \}$ provided by
Proposition \ref{prop:5.8}. Then group together $\alpha$'s
and $\beta$'s into occurrences of $\alpha + \beta$ -- we arrive
exactly at the description for the $n$-th term of the series
$\Reta_{\alpha + \beta} (F)$.
\end{proof}

\begin{corollary}   \label{cor:5.10}
Let $\alpha : \cB \to \cB$ be a linear transformation. The map
$\Reta_{\alpha} : \Serie ( \cB ) \to \Serie ( \cB )$ is bijective
and has inverse equal to $\Reta_{- \alpha}$.
\end{corollary}

\begin{proof}
This is immediate from Proposition \ref{prop:5.9} and the
fact that $\Reta_0$ is the identity map on $\Serie ( \cB )$.
\end{proof}

\begin{remark}   \label{rem:5.11}
$ $From the above corollary we get in particular an explicit formula
for the inverse of the original bijection $\Reta$ from Definition
\ref{def:5.2}. Indeed, this inverse is
\[
\Reta^{-1} = \Reta_1^{-1} = \Reta_{-1} ,
\]
with $(-1) : \cB \to \cB$ being the map $b \mapsto -b$. But
when we invoke Equation (\ref{eqn:5.62}) in the special case of
$\alpha = -1$, the series $\alpha^{-1} \circ \Reta ( \alpha \circ F )$
from its right-hand side simply becomes $- \Reta (-F)$.
We are thus led to the conclusion that the inverse of $\Reta$
acts by
\begin{equation}    \label{eqn:5.71}
\Reta^{-1} (F) = - \Reta (-F), \ \ \forall \, F \in \Serie ( \cB ).
\end{equation}
\end{remark}

\section{The transformations \boldmath{$\bB_{\alpha}$} }

\setcounter{equation}{0}
\setcounter{theorem}{0}

In this section we continue to use the framework and notations
considered in Sections 3--5.

\begin{definition}    \label{Definition:BP}
The {\em Boolean-to-free Bercovici-Pata bijection} is the map
$\bB : \Dist(\cB) \to \Dist ( \cB )$ defined via the
requirement that
\begin{equation}   \label{eqn:6.11}
R_{\bB[\mu]} = B_\mu,  \ \ \mbox{ for } \mu \in \Dist ( \cB ).
\end{equation}
\end{definition}

\begin{remark}   \label{rem:6.2}
$ $From the discussion in Section 4 it is clear that $\bB$ is indeed 
a bijection from $\Dist ( \cB )$ to itself; indeed, one can write
$\bB = \underline{R}^{-1} \circ \underline{B}$, where the bijections
$\underline{R}, \underline{B} : \Dist ( \cB ) \to \Serie ( \cB )$
are defined by sending $\mu \mapsto R_{\mu}$ and respectively
$\mu \mapsto B_{\mu}$, for $\mu \in \Dist ( \cB )$).
The bijection $\bB$ is important because it has meaning in
analytic framework, where it sends general distributions to
$\boxplus$-infinitely divisible distributions. (In the
$\bC$-valued framework, this was found by Bercovici and Pata
\cite{BP1999}. The $\cB$-valued version of the result was recently
established in \cite{BPV2010}.)

In this section we show how the bijection $\bB$ from Definition
\ref{Definition:BP} is incorporated into a semigroup of bijective
transformations of $\Dist ( \cB )$, defined as follows.
\end{remark}

\begin{definition}  \label{def:6.3}
Let $\mu$ be a distribution in $\Dist ( \cB )$ and let
$\alpha : \cB \to \cB$ be a linear map. We define a new distribution
$\bB_{\alpha} ( \mu ) \in \Dist ( \cB )$ by requiring that its
$R$-transform is
\begin{equation}  \label{eqn:6.31}
R_{ \bB_\alpha ( \mu ) }
=  \Reta_{\alpha} \Bigl( \, R_{\mu} \, \Bigr).
\end{equation}
In this way, for every fixed $\alpha$ we get a map
$\bB_{\alpha} : \Dist ( \cB ) \to \Dist ( \cB )$.
\end{definition}

\begin{theorem}   \label{thm:6.4}

$1^o$ For any two linear transformations
$\alpha, \beta : \cB \to \cB$ one has that
\begin{equation}  \label{eqn:6.41}
\bB_{ \alpha } \circ \bB_{\beta} = \bB_{\alpha + \beta} .
\end{equation}

$2^o$ Let $0: \cB \to \cB$ be the linear transformation which is
identically equal to $0$. Then $\bB_0$ is the identity map on
$\Dist ( \cB )$.

$3^o$ For every linear transformation $\alpha : \cB \to \cB$,
the map $\bB_{\alpha} : \Dist ( \cB ) \to \Dist ( \cB )$ is
bijective, with inverse equal to $\bB_{- \alpha }$.

$4^o$ Let $1: \cB \to \cB$ be the identity transformation.
Then $\bB_1 = \bB$ (the Bercovici-Pata bijection reviewed in
Definition \ref{Definition:BP}).
\end{theorem}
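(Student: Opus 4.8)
The plan is to prove the four assertions of Theorem~\ref{thm:6.4} by pushing everything through the bijections $\underline{R}, \underline{B} : \Dist(\cB) \to \Serie(\cB)$ described in Remark~\ref{rem:6.2}, so that each statement about the transformations $\bB_{\alpha}$ becomes a statement about the maps $\Reta_{\alpha}$ on $\Serie(\cB)$, which have already been analyzed in Section~5. The key observation, to be stated first, is that Definition~\ref{def:6.3} can be rephrased as
\[
\underline{R} \circ \bB_{\alpha} = \Reta_{\alpha} \circ \underline{R},
\]
i.e. $\bB_{\alpha}$ is conjugate via $\underline{R}$ to $\Reta_{\alpha}$. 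This is an immediate restatement of Equation~(\ref{eqn:6.31}).

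For part $1^o$, I would apply the conjugation identity twice: for $\mu \in \Dist(\cB)$,
\[
\underline{R}\bigl( \bB_{\alpha}(\bB_{\beta}(\mu)) \bigr)
= \Reta_{\alpha}\bigl( \underline{R}(\bB_{\beta}(\mu)) \bigr)
= \Reta_{\alpha}\bigl( \Reta_{\beta}( \underline{R}(\mu)) \bigr)
= \Reta_{\alpha + \beta}\bigl( \underline{R}(\mu) \bigr)
= \underline{R}\bigl( \bB_{\alpha + \beta}(\mu) \bigr),
\]
where the third equality is Proposition~\ref{prop:5.9}; since $\underline{R}$ is injective, (\ref{eqn:6.41}) follows. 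Part $2^o$ uses the fact, recorded in Remark~\ref{rem:5.6}~$2^o$, that $\Reta_0$ is the identity on $\Serie(\cB)$: then $\underline{R}(\bB_0(\mu)) = \Reta_0(\underline{R}(\mu)) = \underline{R}(\mu)$, so $\bB_0(\mu) = \mu$. Part $3^o$ then follows formally from $1^o$ and $2^o$: $\bB_{\alpha} \circ \bB_{-\alpha} = \bB_{\alpha + (-\alpha)} = \bB_0 = \id$, and likewise in the other order, so $\bB_{\alpha}$ is a bijection with inverse $\bB_{-\alpha}$.

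For part $4^o$, I would combine the conjugation identity at $\alpha = 1$ with the key computation from Section~5. By Remark~\ref{rem:5.6}~$2^o$ one has $\Reta_1 = \Reta$, and by Proposition~\ref{prop:5.4} (Equation~(\ref{eqn:5.42})) one has $\Reta(R_{\mu}) = B_{\mu}$ for every $\mu \in \Dist(\cB)$. Hence
\[
R_{\bB_1(\mu)} = \Reta_1(R_{\mu}) = \Reta(R_{\mu}) = B_{\mu} = R_{\bB[\mu]},
\]
the last equality being the defining property (\ref{eqn:6.11}) of the Bercovici-Pata bijection $\bB$. Since a distribution in $\Dist(\cB)$ is determined by its $R$-transform, this gives $\bB_1(\mu) = \bB[\mu]$ for all $\mu$, i.e. $\bB_1 = \bB$.

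I do not anticipate a genuine obstacle here: all the substantive work has been done in Proposition~\ref{prop:5.9} (the semigroup law for $\Reta_{\alpha}$) and Proposition~\ref{prop:5.4} (identification of $\Reta$ with the $R$-to-$B$ map). The only point requiring mild care is making explicit that each statement transfers cleanly across the bijection $\underline{R}$ — in particular that ``$R_{\nu_1} = R_{\nu_2} \Rightarrow \nu_1 = \nu_2$'' is exactly the injectivity of $\underline{R}$ established in Section~4 — so the proof is essentially a bookkeeping exercise once the conjugation formula $\underline{R} \circ \bB_{\alpha} = \Reta_{\alpha} \circ \underline{R}$ is in place.
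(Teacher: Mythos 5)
Your proposal is correct and follows essentially the same route as the paper: both arguments reduce each assertion to the corresponding statement about the maps $\Reta_{\alpha}$ (the semigroup law of Proposition~\ref{prop:5.9} for $1^o$, $\Reta_0 = \mathrm{id}$ for $2^o$, the formal consequence for $3^o$, and Proposition~\ref{prop:5.4} together with Definition~\ref{Definition:BP} for $4^o$) by comparing $R$-transforms and using that a distribution in $\Dist(\cB)$ is determined by its $R$-transform. Your explicit packaging of Definition~\ref{def:6.3} as the conjugation identity $\underline{R} \circ \bB_{\alpha} = \Reta_{\alpha} \circ \underline{R}$ is just a tidier way of stating the same bookkeeping the paper carries out termwise.
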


\begin{proof} In order to prove $1^o$, let us fix a
$\mu \in \Dist ( \cB )$ and show that
\begin{equation}  \label{eqn:6.42}
\bB_{\alpha} ( \bB_{\beta} ( \mu ) ) = \bB_{\alpha + \beta} ( \mu ).
\end{equation}
We do this by verifying that the distributions on the two
sides of Equation (\ref{eqn:6.42}) have the same $R$-transform.
Indeed, by starting from the left-hand side we can write
\begin{align*}
R_{  \bB_{\alpha} ( \bB_{\beta} ( \mu ))  }
& = \Reta_{\alpha} \Bigl( \, R_{ \bB_{\beta} ( \mu ) } \, \Bigr)  \\
& = \Reta_{\alpha} \Bigl( \, \Reta_{\beta}
        \bigl( \, R_{\mu} \, \bigr) \, \Bigr)  \\
& = \Reta_{\alpha + \beta} \bigl( \, R_{\mu} \, \bigr)
   \mbox{ (by Proposition \ref{prop:5.9}) }                \\
& = R_{  \bB_{\alpha + \beta} ( \mu )  } .
\end{align*}

Statement $2^o$ is immediate from the fact that
$\Reta_0$ is the identity map on $\Serie ( \cB )$, and
$3^o$ is an immediate consequence of $1^o$ and $2^o$.

Finally, for $4^o$ let us fix a $\mu \in \Dist ( \cB )$
for which we prove that
$\bB_1 ( \mu ) = \bB ( \mu )$.
We do this by verifying that the two distributions in question
have the same $R$-transform:
\begin{align*}
R_{ \bB_1 ( \mu ) }
& = \Reta_1 \Bigl( \, R_{\mu} \, \Bigr)               \\
& = \Reta   \Bigl( \, R_{\mu} \, \Bigr)
   \mbox{ (since $\Reta_1 = \Reta$) }                  \\
& = B_{\mu} \mbox{ (by Proposition \ref{prop:5.4}) }  \\
& = R_{ \bB ( \mu ) }.
\end{align*}
\end{proof}

In Definition \ref{def:6.3}, the distribution $\bB_{\alpha} ( \mu )$
was introduced via a prescription on what is its $R$-transform. We
observe next that we could have equally well made the definition
by a very similar prescription phrased in terms of $B$-transforms.

\begin{proposition}   \label{prop:6.5}
For every $\mu \in \Dist ( \cB )$ and every linear transformation
$\alpha : \cB \to \cB$ one has that
\begin{equation}   \label{eqn:6.51}
B_{ \bB_{\alpha} ( \mu ) } = \Reta_{\alpha}
\bigl( B_{\mu} \bigr).
\end{equation}
\end{proposition}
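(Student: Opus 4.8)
The plan is to reduce the statement to Proposition~\ref{prop:5.4} by exploiting the fact that the self-maps $\RtoM$, $\EtatoM$ of $\Serie(\cB)$ and the maps $\Reta_{\alpha}$ intertwine each other in the right way. Recall that by definition $R_{\bB_{\alpha}(\mu)} = \Reta_{\alpha}(R_{\mu})$, and that the $B$-transform of any distribution $\nu$ is recovered from its $R$-transform by $B_{\nu} = \Reta(R_{\nu})$ (Equation~(\ref{eqn:5.42})). Applying this with $\nu = \bB_{\alpha}(\mu)$ gives immediately
\[
B_{\bB_{\alpha}(\mu)} = \Reta\bigl( R_{\bB_{\alpha}(\mu)} \bigr) = \Reta\bigl( \Reta_{\alpha}(R_{\mu}) \bigr).
\]
So the whole content of the proposition is the identity $\Reta \circ \Reta_{\alpha} = \Reta_{\alpha} \circ \Reta$ on $\Serie(\cB)$ (after which we rewrite $\Reta(R_{\mu}) = B_{\mu}$ using Equation~(\ref{eqn:5.42}) once more).

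First I would observe that $\Reta = \Reta_1$ by Remark~\ref{rem:5.6}($2^o$). Then the commutation $\Reta_1 \circ \Reta_{\alpha} = \Reta_{\alpha} \circ \Reta_1$ is an instance of Proposition~\ref{prop:5.9}: both sides equal $\Reta_{1+\alpha}$, since the semigroup $\{\Reta_{\beta}\}$ is commutative (note that $1$ and $\alpha$ commute as linear maps anyway, but even that is not needed — Proposition~\ref{prop:5.9} holds for arbitrary $\alpha,\beta$). Therefore
\[
\Reta\bigl( \Reta_{\alpha}(R_{\mu}) \bigr) = \Reta_{1+\alpha}(R_{\mu}) = \Reta_{\alpha}\bigl( \Reta_1(R_{\mu}) \bigr) = \Reta_{\alpha}\bigl( \Reta(R_{\mu}) \bigr) = \Reta_{\alpha}(B_{\mu}),
\]
where the last equality uses Equation~(\ref{eqn:5.42}). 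Combining the two displayed chains yields $B_{\bB_{\alpha}(\mu)} = \Reta_{\alpha}(B_{\mu})$, which is exactly~(\ref{eqn:6.51}).

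I do not anticipate a genuine obstacle here: the proposition is essentially a bookkeeping consequence of the commutativity of the $\Reta_{\alpha}$-semigroup already established in Proposition~\ref{prop:5.9}, together with the identification $\Reta = \Reta_1$ and the basic relation $\Reta(R_{\mu}) = B_{\mu}$. The only point requiring the slightest care is making sure one applies Equation~(\ref{eqn:5.42}) to the correct distribution — once to $\bB_{\alpha}(\mu)$ and once to $\mu$ — and that the composite $\Reta \circ \Reta_{\alpha}$ is read in the order that matches Proposition~\ref{prop:5.9}. An alternative, even shorter, route is to feed the moment series: applying $\EtatoM$ to both sides of~(\ref{eqn:6.51}) and using $\EtatoM \circ \Reta_{\alpha} \circ \EtatoM^{-1} = $ (a conjugated copy whose fixed points are moment series) one can verify the identity after passing to $M_{\bB_{\alpha}(\mu)}$; but the first route via Proposition~\ref{prop:5.9} is cleaner and is the one I would present.
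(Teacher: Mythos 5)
Your argument is correct and is essentially the paper's own proof: both reduce the claim to the semigroup identity $\Reta_{\alpha} \circ \Reta_{1} = \Reta_{1+\alpha}$ of Proposition~\ref{prop:5.9} together with the relation $\Reta ( R_{\nu} ) = B_{\nu}$ from Proposition~\ref{prop:5.4}, ending with the same computation $\Reta_{\alpha+1}(R_{\mu}) = \Reta_{\alpha}(B_{\mu})$. The only cosmetic difference is that the paper obtains the first equality by writing $B_{\bB_{\alpha}(\mu)} = R_{\bB(\bB_{\alpha}(\mu))}$ and invoking parts $1^o$ and $4^o$ of Theorem~\ref{thm:6.4} (themselves immediate from Proposition~\ref{prop:5.9}), whereas you apply Equation~(\ref{eqn:5.42}) directly to $\bB_{\alpha}(\mu)$.
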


\begin{proof} We calculate:
\begin{align*}
B_{ \bB_{\alpha} ( \mu ) }
& = R_{ \bB ( \bB_{\alpha} ( \mu )) }    \\
& = R_{ \bB_{\alpha + 1} ( \mu ) }
  \mbox{ (by $1^o$ and $4^o$ of Theorem \ref{thm:6.4}) }      \\
& = \Reta_{\alpha + 1} ( R_{\mu} )
  \mbox{ (by the definition of $\bB_{\alpha + 1}$) }         \\
& = \Reta_{\alpha} \Bigl( \, \Reta ( R_{\mu} ) \, \Bigr)
  \mbox{ (by Proposition \ref{prop:5.9}) }                   \\
& = \Reta_{\alpha} ( B_{\mu} ).
\end{align*}
\end{proof}

Yet another way of approaching the transformations
$\bB_{\alpha}$ can be obtained in terms of $\boxplus$ and
$\uplus$ convolution powers.

\begin{proposition}   \label{prop:6.6}
For every $\mu \in \Dist ( \cB )$ and every linear transformation
$\alpha : \cB \to \cB$ one has that
\begin{equation}   \label{eqn:6.61}
\Bigl( \, \bB_{\alpha} ( \mu ) \, \Bigr)^{\uplus (1 + \alpha )}
= \mu^{\boxplus (1 + \alpha)}.
\end{equation}
As a consequence, if $\alpha : \cB \to \cB$ is a linear
transformation such that $1 + \alpha$ is invertible, then the map
$\bB_{\alpha} : \Dist ( \cB ) \to \Dist ( \cB )$ can be described
by the formula
\begin{equation}  \label{eqn:6.62}
\bB_\alpha ( \mu ) = \Bigl( \, \mu^{\boxplus (\iB + \alpha)} \,
\Bigr)^{\uplus ((\iB + \alpha)^{-1})}, \ \
\mu \in \Dist ( \cB ).
\end{equation}
\end{proposition}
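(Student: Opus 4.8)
The plan is to verify Equation~(\ref{eqn:6.61}) at the level of $B$-transforms, since both convolution powers and the transformations $\bB_{\alpha}$ are defined by prescriptions on transforms. By Definition~\ref{def:4.9}, the distribution $\bigl( \bB_{\alpha} ( \mu ) \bigr)^{\uplus (1 + \alpha )}$ is the unique element of $\Dist ( \cB )$ whose $B$-transform equals $(1 + \alpha) \circ B_{ \bB_{\alpha} ( \mu ) }$. So it suffices to show
\[
(1 + \alpha) \circ B_{ \bB_{\alpha} ( \mu ) } = B_{ \mu^{\boxplus (1 + \alpha)} }.
\]
For the left-hand side I would first apply Proposition~\ref{prop:6.5} to rewrite $B_{ \bB_{\alpha} ( \mu ) } = \Reta_{\alpha} ( B_{\mu} )$, so the left-hand side becomes $(1 + \alpha) \circ \Reta_{\alpha} ( B_{\mu} )$. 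Then I would invoke the key identity~(\ref{eqn:5.61}) from Remark~\ref{rem:5.6}, namely $\alpha \circ \Reta_{\alpha} (F) = \Reta ( \alpha \circ F )$; more precisely I need the version with $1 + \alpha$ in place of $\alpha$, which gives $(1+\alpha) \circ \Reta_{\alpha}(F) = \Reta_{1}\bigl( (1+\alpha) \circ F \bigr)$ — this follows from~(\ref{eqn:5.61}) applied to the map $(1+\alpha)$ together with the relabeling of which block is ``outer,'' or alternatively can be read directly from the colouring definition~(\ref{eqn:5.51}) of $\Reta_{\alpha}$. Let me instead use the cleaner route through $R$-transforms below.

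Here is the cleaner version I would actually write. Starting from the right-hand side of~(\ref{eqn:6.61}): by Definition~\ref{def:4.9}, $R_{ \mu^{\boxplus (1+\alpha)} } = (1+\alpha) \circ R_{\mu}$, so by Proposition~\ref{prop:5.4}, $B_{ \mu^{\boxplus (1+\alpha)} } = \Reta\bigl( (1+\alpha) \circ R_{\mu} \bigr)$. On the left-hand side: $B_{ (\bB_{\alpha}(\mu))^{\uplus(1+\alpha)} } = (1+\alpha) \circ B_{ \bB_{\alpha}(\mu) } = (1+\alpha) \circ \Reta_{\alpha}( R_{\mu} )$ using Definition~\ref{def:6.3}. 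Now apply~(\ref{eqn:5.61}) with $\alpha$ replaced by $1+\alpha$ and $F = R_{\mu}$: since $\Reta_{1+\alpha}$ differs from $\Reta_{\alpha}$ only in that the non-outer blocks now carry $(1+\alpha)\circ F$ versus $\alpha \circ F$ — but the extra identity contribution is exactly compensated by the summation over the refinement, one gets $(1+\alpha) \circ \Reta_{\alpha}(F) = \Reta\bigl( (1+\alpha) \circ F \bigr)$. Hence both sides equal $\Reta\bigl( (1+\alpha) \circ R_{\mu} \bigr)$, and~(\ref{eqn:6.61}) follows. The consequence~(\ref{eqn:6.62}) is then immediate: when $1+\alpha$ is invertible, apply Definition~\ref{def:4.9}$(2^o)$ to raise both sides of~(\ref{eqn:6.61}) to the power $\uplus(1+\alpha)^{-1}$, using the semigroup property from Remark~\ref{rem:4.10} on the left (so that $\bigl((\bB_{\alpha}(\mu))^{\uplus(1+\alpha)}\bigr)^{\uplus(1+\alpha)^{-1}} = \bB_{\alpha}(\mu)^{\uplus((1+\alpha)^{-1}\circ(1+\alpha))} = \bB_{\alpha}(\mu)$).

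The main obstacle is the precise justification of the identity $(1+\alpha)\circ \Reta_{\alpha}(F) = \Reta\bigl((1+\alpha)\circ F\bigr)$. While~(\ref{eqn:5.61}) is stated for $\alpha \circ \Reta_{\alpha}(F) = \Reta(\alpha \circ F)$, here one is composing with the map $1 + \alpha$ rather than with $\alpha$, so one must either re-derive it from the colouring formula~(\ref{eqn:5.51}) — observing that multiplying $\Reta_{\alpha}(F)$ on the outside by $(1+\alpha)$ recolours the outer block from colour $1$ (carrying $F$) to carry $(1+\alpha)\circ F$, making every block of every $\pi \ll 1_n$ uniformly carry $(1+\alpha)\circ F$, which is exactly $\Reta$ applied to $(1+\alpha)\circ F$ — or invoke the more general semigroup identity behind~(\ref{eqn:5.61}). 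I expect this to be a short combinatorial verification once the colouring bookkeeping from Remark~\ref{rem:3.9} and Definition~\ref{def:5.5} is unwound carefully; everything else is a routine chain of transform identities.
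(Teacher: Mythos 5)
There is a genuine gap: the identity you rely on, $(1+\alpha)\circ\Reta_{\alpha}(F)=\Reta\bigl((1+\alpha)\circ F\bigr)$, is false for a general series $F$. Equation (\ref{eqn:5.61}) with $\alpha$ replaced by $1+\alpha$ reads $(1+\alpha)\circ\Reta_{1+\alpha}(F)=\Reta\bigl((1+\alpha)\circ F\bigr)$ --- note the subscript $1+\alpha$, not $\alpha$. Concretely, at $n=3$ one has $\bigl[(1+\alpha)\circ\Reta_{\alpha}(F)\bigr]^{[3]}(b_1,b_2)=(1+\alpha)\bigl[F^{[3]}(b_1,b_2)+F^{[2]}\bigl(b_1\,\alpha(F^{[1]})\,b_2\bigr)\bigr]$, while $\Reta\bigl((1+\alpha)\circ F\bigr)^{[3]}(b_1,b_2)=(1+\alpha)F^{[3]}(b_1,b_2)+(1+\alpha)F^{[2]}\bigl(b_1\,(1+\alpha)(F^{[1]})\,b_2\bigr)$: the nested entry is $\alpha(F^{[1]})$ in one and $(1+\alpha)(F^{[1]})$ in the other, and no ``compensation by the summation over refinements'' occurs for a fixed argument $F$ (the correct general relation is $(1+\alpha)\circ\Reta_{\alpha}(F)=\Reta\bigl((1+\alpha)\circ\Reta^{-1}(F)\bigr)$, via Proposition \ref{prop:5.9} and Corollary \ref{cor:5.10}). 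Your ``cleaner'' chain also contains a second slip which formally cancels the first: Definition \ref{def:6.3} prescribes $R_{\bB_{\alpha}(\mu)}=\Reta_{\alpha}(R_{\mu})$, not $B_{\bB_{\alpha}(\mu)}=\Reta_{\alpha}(R_{\mu})$; the $B$-transform is $B_{\bB_{\alpha}(\mu)}=\Reta_{\alpha}(B_{\mu})=\Reta_{\alpha}\bigl(\Reta(R_{\mu})\bigr)=\Reta_{1+\alpha}(R_{\mu})$, by Propositions \ref{prop:6.5}, \ref{prop:5.4} and \ref{prop:5.9}. Since both of your steps are off by the same shift from $\alpha$ to $1+\alpha$, your endpoint $\Reta\bigl((1+\alpha)\circ R_{\mu}\bigr)$ is correct, but neither intermediate equality holds, so the argument as written does not prove (\ref{eqn:6.61}).

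The repair is exactly the route your first (abandoned) paragraph was heading towards, and it is the paper's proof: $B_{(\bB_{\alpha}(\mu))^{\uplus(1+\alpha)}}=(1+\alpha)\circ B_{\bB_{\alpha}(\mu)}=(1+\alpha)\circ\Reta_{\alpha}(B_{\mu})=(1+\alpha)\circ\Reta_{\alpha}\bigl(\Reta(R_{\mu})\bigr)=(1+\alpha)\circ\Reta_{1+\alpha}(R_{\mu})=\Reta\bigl((1+\alpha)\circ R_{\mu}\bigr)=\Reta\bigl(R_{\mu^{\boxplus(1+\alpha)}}\bigr)=B_{\mu^{\boxplus(1+\alpha)}}$. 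Here the passage $\Reta_{\alpha}\circ\Reta_{1}=\Reta_{1+\alpha}$ is Proposition \ref{prop:5.9}; this, applied to $R_{\mu}$ rather than to an arbitrary argument of $\Reta_{\alpha}$, is the genuine combinatorial compensation you were hoping for, and the next step is (\ref{eqn:5.61}) applied to the map $1+\alpha$. Your deduction of (\ref{eqn:6.62}) from (\ref{eqn:6.61}), by taking the $\uplus(1+\alpha)^{-1}$ power and using Remark \ref{rem:4.10}, is fine.
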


\begin{proof}
We verify that the distributions on the two sides
of (\ref{eqn:6.61}) have the same $B$-transform. We start from
the left-hand side:
\begin{align*}
B_{ ( \bB_{\alpha} ( \mu ) )^{\uplus (1 + \alpha )} }
& = (1 + \alpha ) \circ B_{ ( \bB_{\alpha} ( \mu ) ) }   \\
& = (1 + \alpha ) \circ \Reta_{\alpha}
    \bigl( \, B_{ \mu } \, \bigr)
    \mbox{ (by Proposition \ref{prop:6.5}) }             \\
& = (1 + \alpha ) \circ \Reta_{\alpha}
    \bigl( \, \Reta ( R_{ \mu } ) \, \bigr)              \\
& = (1 + \alpha ) \circ \Reta_{1 + \alpha}
    \bigl( \, R_{ \mu } \, \bigr)                        \\
& = \Reta \bigl( \, (1 + \alpha) \circ R_{\mu} \, \bigr)
    \mbox{ (by Remark \ref{rem:5.6}.3) }                 \\
& = \Reta \Bigl( \, R_{\mu^{ \boxplus (1+ \alpha) }} \, \Bigr) \\
& = B_{  \mu^{ \boxplus (1+ \alpha) }  } .
\end{align*}
\end{proof}

We conclude this section by showing how the transformations
$\bB_{\alpha}$ relate to the $\cB$-valued free Brownian motion.
We do this by putting into evidence a transformation ``$\Phi$''
which is the $\cB$-valued analog for the transformation with
the same role (and same name) that was introduced in the
$\bC$-valued framework in \cite{BN2008b, BN2009}. This
operator-valued version of $\Phi$ is introduced in Definition
\ref{def:6.8} below, on a line similar to the one used in
\cite{BN2009} in the multi-variable $\bC$-valued framework.
Before stating that, we briefly recall here some basic
terminology related to $\cB$-valued semicircular elements and
free Brownian motion.

\begin{definition}
For a linear map $\eta : \cB \to \cB$, we denote by
$\gamma_{\eta}$ the distribution in $\Dist ( \cB )$ which is
uniquely determined by the requirement that its $R$-transform
acts as follows:
\[
R_{\gamma_{\eta}}^{ [2] } = \eta, \mbox{ and }
R_{\gamma_{\eta}}^{ [n] } = 0 \mbox{ for every $n \neq 2$.}
\]
This $\gamma_{\eta}$ is called the {\em $\cB$-valued semicircular
distribution of variance $\eta$}. In the case when $\eta$ is
completely positive, the distribution $\gamma_{\eta}$ belongs
to $\Sigma^{0} ( \cB )$ (see e.g. \cite{Sh1999, S1998}).

For $\mu \in \Dist ( \cB )$, the collection of distributions
\[
\{ \mu \boxplus \gamma_{\eta} \mid \eta : \cB \to \cB,
\mbox{ linear} \}
\]
is sometimes referred to as the ``$\cB$-valued free Brownian
motion started at $\mu$''.
\end{definition}

\begin{definition}  \label{def:6.8}
For $\beta : \cB \langle \cX \rangle \rightarrow \cB$ a $\bC$-linear
map, define $\Phi[\beta] \in \Dist(\cB)$ by prescribing the
$B$-transform of $\Phi [ \beta ]$ to act as follows:
\[
B_{\Phi[\beta]}^{[1]} = 0 \in \cB
\]
and then
\[
B_{\Phi[\beta]}^{[n]}[b_1, b_2, \ldots, b_{n-1}]
= \beta[b_1 \cX \cdots \cX b_{n-1}], \ \ \forall \, n \geq 2,
\ \forall \, b_1, \ldots , b_{n-1} \in \cB .
\]
$\Phi$ is clearly a bijection
\[
\begin{split}
\Phi : \set{\bC\text{-linear } \beta: \cB \langle \cX \rangle \rightarrow \cB}
\rightarrow \set{\text{unital } \cB\text{-bimodule maps } \mu: \cB \langle \cX \rangle \rightarrow \cB \text{ s.t. } \mu |_{\cB \cX \cB} = 0}.
\end{split}
\]
Note that if $\beta \in \Dist(\cB)$, then $B_{\Phi[\beta]}^{[2]}[b_1] = b_1$ and for $n > 2$,
\[
B_{\Phi[\beta]}^{[n]}[b_1, b_2, \ldots, b_{n-1}] = b_1 M_\beta^{[n-2]}[b_2, \ldots ,b_{n-2}] b_{n-1}.
\]
\end{definition}

\begin{theorem}  \label{thm:7.11}
Let $\alpha: \cB \rightarrow \cB$ be a linear transformation,
$\beta \in \Dist(\cB)$, and $\set{\gamma_\alpha}$ the $\cB$-valued
semicircular distribution with covariance $\alpha$. Then
\begin{equation}  \label{eqn:7.111}
\Phi[\beta \boxplus \gamma_\alpha] = \bB_\alpha[\Phi[\beta]].
\end{equation}
\end{theorem}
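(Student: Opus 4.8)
The plan is to verify Equation (\ref{eqn:7.111}) by checking that the two distributions $\Phi[\beta \boxplus \gamma_\alpha]$ and $\bB_\alpha[\Phi[\beta]]$ have the same $B$-transform; since $\mu \mapsto B_\mu$ is a bijection on $\Dist(\cB)$ (Section 4), this suffices. By Proposition \ref{prop:6.5}, the right-hand side has $B$-transform $B_{\bB_\alpha[\Phi[\beta]]} = \Reta_\alpha(B_{\Phi[\beta]})$. So the task reduces to computing $B_{\Phi[\beta \boxplus \gamma_\alpha]}$ directly from Definition \ref{def:6.8} and matching it term-by-term against $\Reta_\alpha(B_{\Phi[\beta]})$, whose terms are given by the colored-partition summation formula (\ref{eqn:5.51}).

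First I would unwind the left-hand side. Since $\beta \boxplus \gamma_\alpha$ need not lie in $\Dist(\cB)$ in the ``conditional expectation'' sense, but certainly does lie in $\Dist(\cB)$ as a unital $\cB$-bimodule map (indeed $R_{\beta \boxplus \gamma_\alpha} = R_\beta + R_{\gamma_\alpha}$), Definition \ref{def:6.8} applies and gives, for $n \geq 2$,
\[
B_{\Phi[\beta \boxplus \gamma_\alpha]}^{[n]}[b_1,\dots,b_{n-1}] = (\beta \boxplus \gamma_\alpha)[b_1 \cX \cdots \cX b_{n-1}] = M_{\beta \boxplus \gamma_\alpha}^{[n]}(b_1,\dots,b_{n-1}),
\]
with $B_{\Phi[\beta \boxplus \gamma_\alpha]}^{[1]} = 0$. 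Now $M_{\beta \boxplus \gamma_\alpha}$ is $\RtoM(R_\beta + R_{\gamma_\alpha})$, so its terms expand as sums over $\pi \in NC(n)$ of nested functionals built from the terms of $R_\beta + R_{\gamma_\alpha}$. Because $R_{\gamma_\alpha}$ has only its second term nonzero (equal to $\alpha$), the blocks of $\pi$ of size $2$ may carry either $R_\beta^{[2]}$ or $\alpha$, blocks of other sizes must carry terms of $R_\beta$, and a block of size $1$ contributes $R_\beta^{[1]} = \beta(\cX)$ (note $R_{\gamma_\alpha}^{[1]} = 0$). The key point of the proof is to reorganize this $NC(n)$-sum so that it matches the structure of $\Reta_\alpha$: the summation (\ref{eqn:5.51}) defining $\Reta_\alpha(B_\beta)$ runs over $\pi \ll 1_n$, with the unique outer block carrying a term of $B_{\Phi[\beta]}$ and inner blocks carrying terms of $\alpha \circ B_{\Phi[\beta]}$, and by Definition \ref{def:6.8} each term $B_{\Phi[\beta]}^{[k]}$ with $k \geq 2$ is itself $b_1 M_\beta^{[k-2]}(\cdots) b_{k-1}$, i.e.\ encodes another layer of $NC$-summation inside.

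I expect the heart of the argument — and the main obstacle — to be a bijective matching between non-crossing partitions of $\{1,\dots,n\}$ (indexing $M_{\beta \boxplus \gamma_\alpha}^{[n]}$, with size-$2$ blocks flagged as ``semicircular'' or not) and the pairs (outer structure $\pi \ll 1_n$ with coloring $\oo_\pi$, inner $NC$-partitions inside each block) that index $\Reta_\alpha(B_{\Phi[\beta]})^{[n]}$ after one further expands each $B_{\Phi[\beta]}^{[k]}$ via $M_\beta^{[k-2]} = \RtoM(R_\beta)^{[k-2]}$. Concretely, a size-$2$ block of $\pi$ flagged as semicircular and carrying $\alpha$ should correspond, on the $\Reta_\alpha$ side, to an inner block contributing the ``$b_1 \alpha(\cdots) b_2$'' pattern visible in the $G^{[3]}$ and $G^{[4]}$ formulas of Remark \ref{rem:5.6}.$1^o$; the nesting/interval combinatorics of the remaining blocks should line up via Lemma \ref{lemma:3.8}. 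I would set this up by an induction on $n$, or alternatively by a single careful global bijection argument using the characterization in Remark \ref{rem:5.1} that every $\pi \in NC(n)$ has a unique outer structure. Once the combinatorial identification is in place, equality of all terms of the two $B$-transforms follows, and hence (\ref{eqn:7.111}). As a sanity check I would confirm the low-order cases $n = 2, 3$ by hand against the explicit formulas in Remark \ref{rem:5.6}.$1^o$ and the $\beta \in \Dist(\cB)$ specialization at the end of Definition \ref{def:6.8}.
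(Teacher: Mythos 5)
Your strategy is the paper's own --- compare $B$-transforms, use Proposition \ref{prop:6.5} to write $B_{\bB_\alpha[\Phi[\beta]]} = \Reta_\alpha\bigl(B_{\Phi[\beta]}\bigr)$, and expand the moments of $\beta \boxplus \gamma_\alpha$ over non-crossing partitions in which only pair blocks may carry the semicircular covariance $\alpha$ --- but the proposal stops exactly where the actual work happens. You say you ``expect'' a bijective matching between the two index sets and ``would'' set it up by induction on $n$ or by a global bijection; that matching is the entire content of the proof beyond routine reductions, and it is not carried out. In the paper it is done explicitly: the left-hand side is rewritten as a sum over pairs $(\pi,S)$ with $\pi \ll 1_n$, $\abs{V_o(\pi)} = 2$, $V_o(\pi) \in S \subset \pi$ and every block of $S$ a pair; one defines $f(\pi,S) = \rho \ll 1_n$ whose blocks are the spans of the pairs in $S$, shows that $f^{-1}(\rho)$ is parametrized by independent choices of $\widehat{\pi_i} \ll 1_{U_i}$ with $\abs{V_o(\widehat{\pi_i})} = 2$ inside each block $U_i$ of $\rho$ (the sum over $\rho$ being restricted to partitions all of whose blocks have at least two elements, because $B_{\Phi[\beta]}^{[1]} = 0$), and then verifies the resulting identity block by block, using Lemma \ref{lemma:3.8} and the expansion of $B_{\Phi[\beta]}^{[k]}$ through $M_\beta^{[k-2]}$, with the outer block $V_o(\rho)$ treated without $\alpha$. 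Until you supply this bijection and the per-block identity (or a genuinely worked-out induction replacing them), what you have is a plan rather than a proof.

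There is also a concrete error in the one formula you do write for the left-hand side: since the monomial $b_1 \cX \cdots \cX b_{n-1}$ contains only $n-2$ letters $\cX$ and $\beta \boxplus \gamma_\alpha$ is a unital $\cB$-bimodule map, Definition \ref{def:6.8} gives $B^{[n]}_{\Phi[\beta\boxplus\gamma_\alpha]}[b_1,\ldots,b_{n-1}] = b_1\, M^{[n-2]}_{\beta\boxplus\gamma_\alpha}(b_2,\ldots,b_{n-2})\, b_{n-1}$ for $n > 2$ (and $=b_1$ for $n=2$), not $M^{[n]}_{\beta\boxplus\gamma_\alpha}(b_1,\ldots,b_{n-1})$. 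The shift by two and the sandwiching between $b_1$ and $b_{n-1}$ are not cosmetic: they are precisely what force the reindexed partitions $\pi \ll 1_n$ on the left to have an outer block equal to the pair $\{1,n\}$ carrying no $\alpha$, which matches the special role of the outer block in the definition (\ref{eqn:5.51}) of $\Reta_\alpha$; with $M^{[n]}$ in place of $b_1 M^{[n-2]} b_{n-1}$ the two sides do not even have matching degrees, and the intended correspondence cannot line up.
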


\begin{proof} We prove that the distributions on the two sides of
Equation (\ref{eqn:7.111}) have the same $B$-transform.
For $n = 1$,
\[
B_{\Phi[\beta \boxplus \gamma_\alpha]}^{[1]}
= 0
= \Reta_\alpha[B_{\Phi[\beta]}]^{[1]}
= B_{\bB_\alpha[\Phi[\beta]]}^{[1]}.
\]
For $n \geq 2$,
\[
\begin{split}
B_{\Phi[\beta \boxplus \gamma_\alpha]}^{[n]}[b_1, b_2, \ldots, b_{n-1}]
& = b_1 M_{\beta \boxplus \gamma_\alpha}^{[n-2]}(b_2, \ldots, b_{n-2}) b_{n-1} \\
& = \sum_{\pi \in NC(n-2)} \sum_{\substack{S \subset \pi \\ V \in S \Rightarrow \abs{V} = 2}} b_1 (\alpha, R_\beta)^{[\pi, c_{\pi, S, 1}]}(b_2, \ldots, b_{n-2}) b_{n-1} \\
& = \sum_{\substack{\pi \ll 1_n \\ \abs{V_o(\pi)} = 2}} \
    \sum_{\substack{V_o(\pi) \in S \subset \pi \\ V \in S \Rightarrow \abs{V} = 2}} (1, \alpha, R_\beta)^{[\pi, c_{\pi, S, 2}]}(b_1, b_2, \ldots, b_{n-2}, b_{n-1}),
\end{split}
\]
where
\[
c_{\pi, S, 1} =
\begin{cases}
1, & \text{if } V \in S \\
2, & \text{if } V \not \in S
\end{cases}
\]
and
\[
c_{\pi, S, 2} =
\begin{cases}
1, & \text{if } V = V_o(\pi) \\
2, & \text{if } V \in S \text{ but } V \neq V_o(\pi) \\
3, & \text{if } V \not \in S.
\end{cases}
\]
On the other hand,
\[
\begin{split}
B_{\bB_\alpha[\Phi[\beta]]}^{[n]}[b_1, b_2, \ldots, b_{n-1}]
& = \Reta_\alpha(B_{\Phi[\beta]})^{[n]}[b_1, b_2, \ldots, b_{n-1}] \\
& = \sum_{\rho \ll 1_n} (B_{\Phi[\beta]}, \alpha \circ B_{\Phi[\beta]})^{[\rho, \oo_\rho]}[b_1, b_2, \ldots, b_{n-1}].
\end{split}
\]
Note that $B_{\Phi[\beta]}^{[1]} = 0$, so the sum above can be restricted to $\rho$ each of whose classes contains at least $2$ elements.

Given $\pi$ and $S = \set{V_1, \ldots, V_k}$ as above, we define a partition
\[
f(\pi, S) = \rho = (U_1, \ldots, U_k) \ll 1_n
\]
as follows: $V_i \subset U_i$, and $a \in U_i$ if for some $b, c \in V_i$, $b \leq a \leq c$, and for any $j \neq i$ and $d, e \in V_j$, $d \leq a \leq e$ implies $d < b \leq c < e$. Conversely, given $\rho = (U_1, \ldots, U_k)$ each of whose classes contains at least $2$ elements, $V_i = \set{\min(U_i), \max(U_i)}$, and
\[
\pi = \widehat{\pi_1} \cup \cdots \cup \widehat{\pi_k},
\]
(in the notation of Lemma~\ref{lemma:3.8}), where $\widehat{\pi_i} \ll 1_{U_i}$, $\abs{V_o(\widehat{\pi_i})} = 2$. See also Proposition 5.4, Theorem 6.2 and Remark 6.3 of \cite{BN2009}, or Theorem 11 of \cite{A2010}. To finish the proof, it suffices to show that for each $\rho \ll 1_n$,
\[
\begin{split}
(B_{\Phi[\beta]}, \alpha \circ B_{\Phi[\beta]})^{[\rho, \oo_\rho]}[b_1, b_2, \ldots, b_{n-1}]
& = \sum_{(\pi, S) \in f^{-1}(\rho)} (1, \alpha, R_\beta)^{[\pi, c_{\pi, S, 2}]}(b_1, b_2, \ldots, b_{n-2}, b_{n-1}) \\
& = \sum_{\substack{\pi = \widehat{\pi_1} \cup \cdots \cup \widehat{\pi_k} \\ \widehat{\pi_i} \ll 1_{U_i} \\ \abs{V_o(\widehat{\pi_i})} = 2}} (1, \alpha, R_\beta)^{[\pi, c_{\pi, S, 2}]}(b_1, b_2, \ldots, b_{n-2}, b_{n-1}).
\end{split}
\]
Since, subject to these conditions, the $\set{\widehat{\pi_i}}$ can be chosen independently, it suffices to prove this equality for each class $U_i \in \rho$ separately. If $U_i \neq V_o(\rho)$, the expression is
\[
\begin{split}
(\alpha \circ B_{\Phi[\beta]})^{[\abs{U_i}]}[b_1, b_2, \ldots, b_{\abs{U_i}-1}]
& = \alpha[\beta[b_1 \cX b_2 \cX \ldots \cX b_{\abs{U_i}-1}] \\
& = \sum_{\substack{\widehat{\pi_i} \ll 1_{U_i} \\ \abs{V_o(\widehat{\pi_i})} = 2}} (\alpha, R_\beta)^{[\pi, \oo_{\widehat{\pi_i}}]}(b_1, b_2, \ldots, b_{n-2}, b_{\abs{U_i}-1}),
\end{split}
\]
while for $U_i = V_o(\rho)$ the expression is the same without $\alpha$. The result follows.
\end{proof}

$\ $

$\ $

\section{Operator models}

\setcounter{equation}{0}
\setcounter{theorem}{0}

In the remainder of the paper, we will denote by $\cCP(\cB)$ the
space of completely positive $\bC$-linear maps on $\cB$.

\begin{construction}
\label{Construction:lambda-beta}
Let $\lambda \in \cB$ be symmetric, and $\beta: \cB \langle \cX \rangle \rightarrow \cB$ be a $\bC$-linear, completely positive map. Define the $\cB$-valued inner product on $\cB_0 \langle \cX \rangle$ by
\begin{equation}
\label{Beta-inner-product}
\ip{b_0 \cX \ldots \cX b_n}{c_0 \cX \ldots \cX c_k}_\beta
= c_k^\ast \beta[c_{k-1}^\ast \cX \ldots \cX c_0^\ast b_0 \cX \ldots \cX b_{n-1}] b_n.
\end{equation}
Note that a general element of $\cB_0 \langle \cX \rangle$ is of the form $\sum_{i=1}^n P_i \cX b_i$, with $P_i \in \cB \langle \cX \rangle$, and
\[
\ip{\sum_{i=1}^n P_i \cX b_i}{\sum_{i=1}^n P_i \cX b_i}_\beta
= \sum_{i, j=1}^n b_i^\ast \beta[P_i^\ast P_j] b_j \geq 0
\]
since $\beta$ is completely positive. It follows that this inner product is positive. It may be degenerate; however, we will only use this construction for combinatorial computations of moments and cumulants.
\end{construction}

\begin{lemma}
\label{Lemma:Boolean-Fock}
Consider the vector space $\cB \langle \cX \rangle = \cB \oplus \cB_0 \langle \cX \rangle$ with the $\cB$-valued inner product
\[
\ip{b \oplus m}{b' \oplus m'} = (b')^\ast b + \ip{m}{m'}_\beta.
\]
On this vector space, we define maps
\[
a^\ast (b \oplus b_0 \cX \ldots \cX b_n) = 0 \oplus \cX b,
\]
\[
p (b \oplus b_0 \cX \ldots \cX b_n) = 0 \oplus \cX b_0 \cX \ldots \cX b_n,
\]
and
\[
a (b \oplus b_0 \cX \ldots \cX b_n) = \beta[b_0 \cX \ldots \cX b_{n-1}] b_n \oplus 0,
\]
in particular
\[
a (b \oplus b_0 \cX) = \beta[b_0] \oplus 0.
\]
Then $p$ and $a^\ast + a$ are symmetric. Therefore the operator
\[
X = a^\ast + a + p + \lambda
\]
is also symmetric. It follows that
\[
\mu_{(\lambda, \beta)} : \cB \langle \cX \rangle \rightarrow \cB,
\]
defined via
\[
\mu_{(\lambda, \beta)}[b_0 \cX \ldots \cX b_n] = \ip{(b_0 X \ldots X b_n) (1 \oplus 0)}{1 \oplus 0}
\]
is a conditional expectation, $\mu_{(\lambda, \beta)} \in \Sigma(\cB)$. Moreover, if $\beta$ satisfies the boundedness condition \eqref{Eq:Bounded}, then $\mu_{(\lambda, \beta)} \in \Sigma^0(\cB)$.
\end{lemma}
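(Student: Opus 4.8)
The plan is to work throughout inside the algebraic pre-Hilbert $\cB$-module $\cH:=\cB\langle\cX\rangle=\cB\oplus\cB_0\langle\cX\rangle$, passing to a completion only at the very end, for the statement about $\Sigma^{0}(\cB)$. First I would record that the $\cB$-valued form on $\cH$ is positive semidefinite: on the $\cB$-summand this is clear, and on $\cB_0\langle\cX\rangle$ it is the computation already made in Construction~\ref{Construction:lambda-beta}, where one writes a general element as $\sum_i P_i\cX b_i$ with $P_i\in\cB\langle\cX\rangle$ and uses complete positivity of $\beta$. Two pieces of notation help: the form may be rewritten as $\ip{P\cX b}{Q\cX c}_\beta=c^\ast\beta[Q^\ast P]\,b$ for $P,Q\in\cB\langle\cX\rangle$, $b,c\in\cB$; and, writing $P(X)$ for the operator obtained from $P\in\cB\langle\cX\rangle$ by substituting $X$ for $\cX$, one has $\mu_{(\lambda,\beta)}[P]=\ip{P(X)(1\oplus 0)}{1\oplus 0}$.

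The symmetry claims are then short computations. For $p$: since $p(P\cX b)=(\cX P)\cX b$ and $\cX^\ast=\cX$ in $\cB\langle\cX\rangle$, we get $\ip{p(P\cX b)}{Q\cX c}_\beta=c^\ast\beta[Q^\ast\cX P]\,b=c^\ast\beta[(\cX Q)^\ast P]\,b=\ip{P\cX b}{p(Q\cX c)}_\beta$, while $p$ kills the $\cB$-summand, which is orthogonal to $\cB_0\langle\cX\rangle$; hence $p$ is symmetric. For $a^\ast+a$: one checks that $a$ and $a^\ast$ are mutually adjoint. With $w=b''\oplus\sum_j Q_j\cX c_j$, both $\ip{a^\ast(b'\oplus m)}{w}$ and $\ip{b'\oplus m}{aw}$ evaluate to $\sum_j c_j^\ast\beta[Q_j^\ast]\,b'$ (using $\beta[Q_j]^\ast=\beta[Q_j^\ast]$, since $\beta$ is $\ast$-linear); conjugating the $\cB$-valued form then also gives $\ip{av}{w}=\ip{v}{a^\ast w}$. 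Finally left multiplication by $\lambda$ is symmetric because $\lambda=\lambda^\ast$ and $\ip{\lambda v}{w}=\ip{v}{\lambda^\ast w}$ on $\cH$. So $X=a^\ast+a+p+\lambda$ is a symmetric operator on $\cH$, and visibly $X$ maps $\cB\langle\cX\rangle$ into itself, so all the expressions $P(X)(1\oplus 0)$ and the formula for $\mu_{(\lambda,\beta)}$ stay inside the algebraic module.

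Next I would read off $\mu_{(\lambda,\beta)}\in\Sigma(\cB)$. Unitality is $\mu_{(\lambda,\beta)}[b]=\ip{b\oplus 0}{1\oplus 0}=b$. The $\cB$-bimodule property follows from the identities $\ip{bv}{1\oplus 0}=b\,\ip{v}{1\oplus 0}$ and $\ip{vb}{1\oplus 0}=\ip{v}{1\oplus 0}\,b$, together with the fact that $X$ (and every left multiplication) commutes with the right $\cB$-action on $\cH$. For positivity one uses $X=X^\ast$: then $P(X)^\ast=P^\ast(X)$ on $\cH$, so for polynomials $P_1,\dots,P_m$ and $b_1,\dots,b_m\in\cB$,
\[
\sum_{i,j}b_i^\ast\,\mu_{(\lambda,\beta)}[P_i^\ast P_j]\,b_j=\ip{\textstyle\sum_j P_j(X)(1\oplus 0)\,b_j}{\textstyle\sum_i P_i(X)(1\oplus 0)\,b_i}\ \geq\ 0,
\]
so $\mu_{(\lambda,\beta)}$ is in fact completely positive, hence a (positive) conditional expectation: $\mu_{(\lambda,\beta)}\in\Sigma(\cB)$.

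For the last assertion, assume $\beta$ satisfies \eqref{Eq:Bounded}. The goal is to realize $\mu_{(\lambda,\beta)}$ as $\mu_X$ for a bounded self-adjoint $X$ in a $\cB$-valued $C^\ast$-probability space. Separating and completing $\cH$ gives a Hilbert $\cB$-module $\widehat{\cH}$; there, $a^\ast$ (sending $b$ to $\cX b$, of operator norm $\norm{\beta[1]}^{1/2}$), its adjoint $a$, and left multiplication by $\lambda$ (of norm $\leq\norm{\lambda}$, as on any GNS-type module) extend boundedly for free. The main obstacle is the boundedness of $p$, i.e. of left multiplication by $\cX$ on $\widehat{\cH}$: this is the operator-valued form of the classical fact that a moment sequence of at most geometric growth comes from a compactly supported measure, and it is exactly what hypothesis \eqref{Eq:Bounded} on $\beta$ is designed to supply (one reduces, as in the characterization of $\Sigma^{0}(\cB)$ recalled in Section~2, to the scalar functionals $\phi\circ\beta$ for states $\phi$ of $\cB$, and reassembles using $\norm{b}=\sup_\phi\phi(b)$ for $b\geq 0$). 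Granting this, $X=a^\ast+a+p+\lambda$ is bounded and self-adjoint on $\widehat{\cH}$; letting $\cM\subseteq\mathcal L(\widehat{\cH})$ be the $C^\ast$-algebra generated by $X$ and by the isometric copy of $\cB$ acting by left multiplication, and $\bE[T]:=\ip{T(1\oplus 0)}{1\oplus 0}$ (a conditional expectation, by the computations above), the pair $(\cM,\bE)$ is a $\cB$-valued $C^\ast$-probability space with $\mu_X=\mu_{(\lambda,\beta)}$, whence $\mu_{(\lambda,\beta)}\in\Sigma^{0}(\cB)$. Alternatively, one can avoid the completion altogether and verify \eqref{Eq:Bounded} for $\mu_{(\lambda,\beta)}$ directly, by expanding $\mu_{(\lambda,\beta)}[\cX b_1\cX\cdots\cX b_{n-1}\cX]$ through $X=a^\ast+a+p+\lambda$ into at most $4^{\,n}$ terms and estimating each via \eqref{Eq:Bounded} for $\beta$ --- extended, by the Cauchy--Schwarz inequality for completely positive maps, to a bound $\norm{\beta[c_0\cX c_1\cdots\cX c_m]}\leq\norm{\beta[1]}^{1/2}M^{m}\prod_i\norm{c_i}$ for arbitrary words.
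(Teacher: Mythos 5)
Your treatment of the algebraic part of the lemma is correct and is essentially the paper's own argument: symmetry of $p$ and mutual adjointness of $a$ and $a^\ast$ are verified by the same direct computations with the $\beta$-inner product (you additionally record the symmetry of left multiplication by $\lambda$, the hermitianness $\beta[Q^\ast]=\beta[Q]^\ast$ that both proofs implicitly use, and the deduction of positivity of $\mu_{(\lambda,\beta)}$ from $P\mapsto P(X)$ being a $\ast$-homomorphism, which the paper leaves as ``it follows''). So up to and including $\mu_{(\lambda,\beta)}\in\Sigma(\cB)$ there is nothing to object to.

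The $\Sigma^0(\cB)$ statement is where you diverge from the paper and where your argument is not yet complete. The paper does not complete the module at all: it derives the explicit moment expansion (its equation \eqref{Boolean-sum}), in which $\mu_{(\lambda,\beta)}[\cX b_1\cdots b_{n-1}\cX]$ is a sum of at most $2^n$ products of values $\beta[b_{i+1}\cX\cdots\cX b_{j-1}]$ and $\lambda$'s interlaced with the $b_i$'s, and then applies the growth hypothesis on $\beta$ (together with $\norm{\lambda}\le M$) term by term to get $\norm{\mu_{(\lambda,\beta)}[\cX b_1\cdots b_{n-1}\cX]}\le 2^nM^n\prod_i\norm{b_i}$, which is exactly condition \eqref{Eq:Bounded} for $\mu_{(\lambda,\beta)}$. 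Your route (a) replaces this by boundedness of $a^\ast,a,p,\lambda$ on the completed module, but the only nontrivial point --- boundedness of $p$, i.e.\ of left multiplication by $\cX$ with respect to $\ip{\cdot}{\cdot}_\beta$ --- is precisely the content of the claim, and you explicitly ``grant'' it; the reduction to scalar functionals $\phi\circ\beta$ and reassembly is plausible but is not carried out, so route (a) as written is circular in spirit. Your route (b) is the paper's argument in coarser form ($4^n$ monomials instead of the surviving $2^n$), and it would close the proof, except that the ``extended bound'' $\norm{\beta[c_0\cX c_1\cdots\cX c_m]}\le\norm{\beta[1]}^{1/2}M^m\prod_i\norm{c_i}$ does not follow from a single Cauchy--Schwarz as you assert: after writing $c_0\cX c_1\cdots\cX c_m=c_0\,y$ with $y=\cX c_1\cdots\cX c_m$ and applying Cauchy--Schwarz, you are left with $\beta[y^\ast y]$, and $y^\ast y$ again carries the outer coefficient $c_m$; in the operator-valued setting you cannot pull it out, because $\norm{c}^2z^\ast z-c^\ast z^\ast z\,c$ is not a sum of hermitian squares in $\cB\langle\cX\rangle$, so positivity of $\beta$ alone does not yield $\norm{\beta[c^\ast z^\ast z c]}\le\norm{c}^2\norm{\beta[z^\ast z]}$. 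The clean fix is the one the paper tacitly adopts: interpret the hypothesis ``$\beta$ satisfies \eqref{Eq:Bounded}'' as the geometric bound on $\beta$ evaluated on words with outer $\cB$-coefficients, $\norm{\beta[c_0\cX c_1\cdots\cX c_m]}\le CM^m\prod_i\norm{c_i}$ (this is the form in which the converse lemma of the paper produces $\beta$, via $\beta[c_1\cX\cdots\cX c_m]=\ip{c_1Tc_2\cdots Tc_m\xi}{\xi}_\mu$ with $\norm{T}\le\norm{\cX}$); with that reading your expansion argument, or the paper's \eqref{Boolean-sum}, finishes the proof immediately. Without it, you owe a genuine argument for the bound on outer-coefficient words (or for the boundedness of $p$), and the one-line Cauchy--Schwarz claim is not it.
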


\begin{proof}
We check the first statement. Indeed,
\[
\begin{split}
\ip{a^\ast (b \oplus b_0 \cX \ldots \cX b_n)}{c \oplus c_0 \cX \ldots \cX c_k}
& = \ip{0 \oplus \cX b}{c \oplus c_0 \cX \ldots \cX c_k} \\
& = c_k^\ast \beta[c_{k-1}^\ast \cX \ldots \cX c_0^\ast] b \\
& = \ip{b \oplus 0}{a(c \oplus c_0 \cX \ldots \cX c_k)} \\
& = \ip{b \oplus b_0 \cX \ldots \cX b_n}{a(c \oplus c_0 \cX \ldots \cX c_k)}
\end{split}
\]
and
\[
\begin{split}
\ip{p (b \oplus b_0 \cX \ldots \cX b_n)}{c \oplus c_0 \cX \ldots \cX c_k}
& = \ip{0 \oplus \cX b_0 \cX \ldots \cX b_n}{c \oplus c_0 \cX \ldots \cX c_k} \\
& = c_k^\ast \beta[c_{k-1}^\ast \cX \ldots \cX c_0^\ast \cX b_0 \cX \ldots \cX b_{n-1}] b_n \\
& = \ip{b \oplus b_0 \cX \ldots \cX b_n}{0 \oplus \cX c_0 \cX \ldots \cX c_k} \\
& = \ip{b \oplus b_0 \cX \ldots \cX b_n}{p (c \oplus c_0 \cX \ldots \cX c_k)}.
\end{split}
\]
It follows that $\mu_{(\lambda, \beta)} \in \Sigma(\cB)$. Finally, if $\beta$ satisfies \eqref{Eq:Bounded} with constant $M$ and $\norm{\lambda} \leq M$, then from equation~\eqref{Boolean-sum} below,
\[
\norm{\mu_{(\lambda, \beta)}[\cX b_1 \cX \ldots b_{n-1} \cX]} \leq 2^n M^n \norm{b_1} \cdot \norm{b_2} \cdot \ldots \cdot \norm{b_{n-1}}.
\]
\end{proof}

\begin{lemma}
The Boolean cumulant functionals $B_{(\lambda, \beta)}$ of $\mu_{(\lambda, \beta)}$ are
\[
B_{(\lambda, \beta)}^{[1]} = \lambda
\]
and
\[
B_{(\lambda, \beta)}^{[n]}(b_1, b_2, \ldots, b_{n-1}) = \beta[b_1 \cX \ldots \cX b_{n-1}].
\]
\end{lemma}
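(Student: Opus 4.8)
The plan is to compute the moments of $\mu_{(\lambda,\beta)}$ directly on the Fock space from Lemma~\ref{Lemma:Boolean-Fock}, rewrite them as a sum over interval partitions, and recognize that sum as $\EtatoM$ (Notation~\ref{def:4.5}) applied to the series $F\in\Serie(\cB)$ with $F^{[1]}=\lambda$ and $F^{[n]}(b_1,\dots,b_{n-1})=\beta[b_1\cX\cdots\cX b_{n-1}]$ for $n\geq 2$. Since $B_{(\lambda,\beta)}=\EtatoM^{-1}(M_{(\lambda,\beta)})$ by Definition~\ref{def:4.7}, and $M_{(\lambda,\beta)}^{[n]}(b_1,\dots,b_{n-1})=\mu_{(\lambda,\beta)}[\cX b_1\cX\cdots\cX b_{n-1}\cX]$, the claim amounts to showing $M_{(\lambda,\beta)}=\EtatoM(F)$.

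First I would fix $n\geq 1$ and $b_1,\dots,b_{n-1}\in\cB$ and expand
\[
\mu_{(\lambda,\beta)}[\cX b_1\cX\cdots\cX b_{n-1}\cX]=\ip{Xb_1Xb_2\cdots b_{n-1}X\,(1\oplus 0)}{1\oplus 0}
\]
by substituting $X=a^\ast+a+p+\lambda$ for each of the $n$ occurrences of $X$, obtaining a sum of $4^n$ terms indexed by a choice of one of the four summands for each copy of $X$. Reading such a choice in the order in which the operators act on the vacuum $1\oplus 0$, one notes: $a^\ast$ sends the $\cB$-summand into the length-one part of $\cB_0\langle\cX\rangle$ and annihilates everything of positive length; $p$ raises the length of a word by one and kills the $\cB$-summand; $a$ collapses an entire word back into $\cB$ by applying $\beta$; $\lambda$ preserves the $\cB$-summand; and the final pairing with $1\oplus 0$ keeps only the $\cB$-component. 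Hence a term survives only if the chosen sequence is ``balanced'', and every balanced sequence decomposes $\{1,\dots,n\}$ into consecutive intervals --- each maximal stretch of the form ``$a^\ast$, then $k$ copies of $p$, then $a$'' being an interval of length $k+2\geq 2$, and each lone ``$\lambda$'' being a singleton. These decompositions are exactly the elements of $\Int(n)$.

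Next I would track how the interleaved multiplications by $b_1,\dots,b_{n-1}$ distribute over the intervals: a singleton interval contributes the factor $\lambda=F^{[1]}$, an interval $V$ of length $m\geq 2$ contributes $\beta[b_{\min V}\cX b_{\min V+1}\cX\cdots\cX b_{\max V-1}]=F^{[m]}(b_{\min V},\dots,b_{\max V-1})$, and the factors coming from successive intervals are concatenated and separated by the boundary elements $b_i$ --- precisely the pattern of Remark~\ref{rem:3.7}. This yields
\[
\mu_{(\lambda,\beta)}[\cX b_1\cX\cdots\cX b_{n-1}\cX]=\sum_{\pi\in\Int(n)}F^{[\pi]}(b_1,\dots,b_{n-1})=\bigl(\EtatoM(F)\bigr)^{[n]}(b_1,\dots,b_{n-1}),
\]
i.e. $M_{(\lambda,\beta)}=\EtatoM(F)$, whence $B_{(\lambda,\beta)}=F$ as asserted. (For $n=1$ this is just $\mu_{(\lambda,\beta)}[\cX]=\lambda$; the identity $\mu_{(\lambda,\beta)}[\cX b_1\cX]=\beta[b_1]+\lambda b_1\lambda$ is a quick hand-check of the case $n=2$.)

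The delicate point is the bookkeeping in the last two paragraphs: one must verify carefully that the surviving terms of the $4^n$-fold expansion are exactly the interval partitions --- in particular that no improperly nested pattern survives, which is precisely what is forced by $a$ collapsing an entire word rather than a single layer --- and that the multiplications by the $b_i$ land in the positions dictated by the interval-partition formula of Remark~\ref{rem:3.7}. Once this is checked, the identification with $\EtatoM(F)$ and the conclusion via Definition~\ref{def:4.7} are immediate. (Since $\mu_{(\lambda,\beta)}$ is defined directly by the bilinear-form expression, the possible degeneracy of the inner product causes no difficulty here.)
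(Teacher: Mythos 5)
Your argument is correct and is essentially the paper's own proof: the paper likewise expands $\ip{(X b_1 \cdots b_{n-1} X)(1\oplus 0)}{1\oplus 0}$ into the sum over interval decompositions (its formula \eqref{Boolean-sum}, with $\beta[\emptyset]=\lambda$ for singleton intervals) and compares it with the interval-partition form of the moment--Boolean-cumulant relation coming from Remark~\ref{rem:3.7}, Notation~\ref{def:4.5} and Definition~\ref{def:4.7}. The one point you should make explicit in the ``delicate bookkeeping'' step is that a choice of the summand $\lambda$ made while the vector already lies in $\cB_0\langle\cX\rangle$ contributes nothing, i.e.\ $\lambda$ enters only through the $\cB$-component of the vector (as in \eqref{Boolean-sum}); if $\lambda$ were allowed to multiply a word that is later collapsed by $a$, extra terms such as $\beta[b_1\lambda b_2]$ would survive in $\mu_{(\lambda,\beta)}[\cX b_1\cX b_2\cX]$ and the expansion would no longer reduce to interval partitions, so this is exactly where the claim that lone $\lambda$'s correspond to singleton blocks gets its justification.
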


\begin{proof}
Using the definition $X = a^\ast + a + p + \lambda$ and the definitions of $a^\ast, a, p, \lambda$,
\begin{equation}
\label{Boolean-sum}
\begin{split}
\mu_{(\lambda, \beta)}[\cX b_1 \ldots b_{n-1} \cX]
& = \ip{(X b_1 \ldots b_{n-1}X) (1 \oplus 0)}{1 \oplus 0} \\
& = \sum_{k=1}^n \sum_{1 \leq i_1 < i_2 < \ldots < i_k = n}
\beta[b_1 \cX \ldots b_{i_1 - 1}] b_{i_1} \beta[b_{i_1 + 1} \cX \ldots b_{i_2 - 1}] b_{i_2} \\
&\qquad\qquad\qquad\qquad \ldots b_{i_{k-1}} \beta[b_{i_{k-1} + 1} \cX \ldots b_{n-1}],
\end{split}
\end{equation}
where $\beta[\emptyset] = \lambda$. On the other hand, combining the last formula in Remark~\ref{rem:3.7}, Notation~\ref{def:4.5}, and Definition~\ref{def:4.7}, we get
\[
\begin{split}
M_{\mu}^{[n]}(b_1, \ldots, b_{n-1})
& = \sum_{k=1}^n \sum_{1 \leq i_1 < i_2 < \ldots < i_k = n}
B_\mu(b_1, \ldots, b_{i_1 - 1}) b_{i_1} B_\mu(b_{i_1 + 1}, \ldots, b_{i_2 - 1}) b_{i_2} \\
&\qquad\qquad\qquad\qquad \ldots b_{i_{k-1}} B_\mu(b_{i_{k-1} + 1}, \ldots, b_{n-1}).
\end{split}
\]
Comparing these two formulas, we get the result.
\end{proof}

Compare with Theorem~5.6 of \cite{PV2010}.

The following result follows from Lemma 2.9 and Theorem 2.5 of \cite{PV2010}, but for completeness we provide a shorter proof for our case.

\begin{lemma}
Let $\mu \in \Sigma(\cB)$. Then for some symmetric element $\lambda \in \cB$ and a $\bC$-linear, completely positive map $\beta : \cB \langle \cX \rangle \rightarrow \cB$, we have
\[
\mu = \mu_{(\lambda, \beta)}.
\]
If $\mu \in \Sigma^0(\cB)$, then $\beta$ satisfies condition \eqref{Eq:Bounded}.
\end{lemma}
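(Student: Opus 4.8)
The plan is to read the pair $(\lambda,\beta)$ off from the Boolean cumulants of $\mu$, and then to establish complete positivity of $\beta$ by realizing it as a Gram-type form inside the GNS Hilbert $\cB$-module of $\mu$.

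First I would set $\lambda:=\mu[\cX]$; from $4\cX=(\cX+\iB)^*(\cX+\iB)-(\cX-\iB)^*(\cX-\iB)$ and positivity of $\mu$ one gets $\lambda=\lambda^*$. Then define $\beta:\cB\langle\cX\rangle\to\cB$ to be the unique $\bC$-linear map whose restriction to the homogeneous component $\cB\cX\cB\cX\cdots\cX\cB$ with $m$ occurrences of $\cX$ (identified with $\cB^{\otimes_\bC(m+1)}$) equals the Boolean cumulant functional $B_\mu^{[m+2]}$; in particular $\beta|_{\cB}=B_\mu^{[2]}$. Granting for the moment that $\beta$ is completely positive, Construction~\ref{Construction:lambda-beta}, Lemma~\ref{Lemma:Boolean-Fock}, and the subsequent lemma apply and give that $\mu_{(\lambda,\beta)}\in\Sigma(\cB)$ is well defined with Boolean cumulants $B_{(\lambda,\beta)}^{[1]}=\lambda=B_\mu^{[1]}$ and $B_{(\lambda,\beta)}^{[n]}(b_1,\dots,b_{n-1})=\beta[b_1\cX\cdots\cX b_{n-1}]=B_\mu^{[n]}(b_1,\dots,b_{n-1})$ for $n\ge2$. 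Since $\mu\mapsto B_\mu$ is a bijection of $\Dist(\cB)$ onto $\Serie(\cB)$ (Section~\ref{Section:Transforms}) and $\Sigma(\cB)\subseteq\Dist(\cB)$, this forces $\mu_{(\lambda,\beta)}=\mu$.

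It remains to prove that $\beta$ is completely positive, which by the computation in Construction~\ref{Construction:lambda-beta} is the same as saying that the $\cB$-valued form $\langle\cdot,\cdot\rangle_\beta$ on $\cB_0\langle\cX\rangle$ is positive semidefinite. For this, build the GNS Hilbert $\cB$-module of $\mu$: equip $\cB\langle\cX\rangle$ with the pre-inner-product $\langle P,R\rangle:=\mu[P^*R]$ (positive, since $\mu$ is), let $\cH$ be its separated completion with canonical map $P\mapsto\widehat P$, and put $\Omega:=\widehat\iB$. Because $\langle b\Omega,b'\Omega\rangle=b^*b'$, the submodule $\cB\Omega$ is isomorphic to $\cB$, hence self-dual and orthogonally complemented; let $Q$ be the orthogonal projection onto $\cH_0:=(\cB\Omega)^\perp$ --- a $\cB$-bimodule map, since $\cH_0$ is a sub-bimodule --- and let $T$ be the symmetric operator $\widehat P\mapsto\widehat{\cX P}$ (densely defined; bounded with $\|T\|\le\|X\|$ when $\mu\in\Sigma^0(\cB)$, and harmless in general since every vector appearing below lies in its domain). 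The key step is the identity
\[
B_\mu^{[n]}(b_1,\dots,b_{n-1})=\bigl\langle\,\Omega,\ Tb_1\,QT b_2\,Q\cdots Tb_{n-1}\,QT\,\Omega\,\bigr\rangle,\qquad n\ge1,
\]
where each $b_i$ acts by left multiplication. It is proved by induction on $n$: starting from $M_\mu^{[n]}(b_1,\dots,b_{n-1})=\langle\Omega,Tb_1Tb_2\cdots Tb_{n-1}T\Omega\rangle$ and inserting the resolution of the identity $I=Q+E$, with $E$ the projection onto $\cB\Omega$, in each of the $n-1$ gaps between consecutive $T$'s, every occurrence of $E$ splits the expression into a product; comparing the resulting expansion term by term with the interval-partition moment--cumulant relation $M_\mu^{[n]}=\sum_{\pi\in\Int(n)}B_\mu^{[\pi]}$ (Remark~\ref{rem:3.7}) identifies the all-$Q$ term as $B_\mu^{[n]}$.

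Granting this identity, one commutes each $Q$ rightward past the left multiplications it precedes (using that $Q$ is a $\cB$-bimodule map) and uses $\langle v,\widehat R\rangle=\langle Qv,\widehat R\rangle$ for $\widehat R\in\cH_0$ to turn $\langle\cdot,\cdot\rangle_\beta$ into a genuine Gram form: one obtains $\langle P,P'\rangle_\beta=\langle W(P'),\,W(P)\rangle_\cH$ for all $P,P'\in\cB_0\langle\cX\rangle$, where $W:\cB_0\langle\cX\rangle\to\cH_0$ is the $\bC$-linear map $W(b_0\cX b_1\cdots\cX b_n):=b_0\,QTb_1\,QT\cdots QT\,b_n\,\Omega$. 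Since $\langle P\cX,P'\cX\rangle_\beta=\beta[(P')^*P]$, this yields $[\beta[P_i^*P_j]]_{i,j}=[\langle W(P_i\cX),W(P_j\cX)\rangle]_{i,j}$, the Gram matrix of finitely many vectors of $\cH$, which is a positive element of $M_r(\cB)$; hence $\beta$ is completely positive. Finally, if $\mu=\mu_X\in\Sigma^0(\cB)$, then $\|T\|\le\|X\|$, $\|Q\|\le1$ and $\|\Omega\|=1$ applied to the key identity give $\|B_\mu^{[n]}(b_1,\dots,b_{n-1})\|\le\|X\|^{\,n}\|b_1\|\cdots\|b_{n-1}\|$, and specializing to the monomial $\cX b_1\cX\cdots b_{n-1}\cX$ (coefficients $\iB,b_1,\dots,b_{n-1},\iB$) gives condition~\eqref{Eq:Bounded} for $\beta$ with $M=\max(1,\|X\|)^3$. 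I expect the main obstacle to be the key operator--cumulant identity together with the bimodule bookkeeping that converts it into the Gram-form identity for $\langle\cdot,\cdot\rangle_\beta$: both are routine in spirit but fiddly, and some care is needed to carry the possibly unbounded symmetric operator $T$ through the general $\Sigma(\cB)$ case.
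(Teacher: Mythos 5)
Your proposal is correct and follows essentially the same route as the paper: both build the GNS $\cB$-module of $\mu$, compress multiplication by $\cX$ by the projection complementary to $\cB\Omega$ (your $QTb_1Q\cdots QT\Omega$ identity is exactly the paper's $\beta[b_1\cX\cdots\cX b_n]=\ip{b_1Tb_2\cdots Tb_n\xi}{\xi}_\mu$ with $\xi=P^\perp\cX\cdot 1$, obtained by inserting $P+P^\perp$ in the gaps and matching the interval-partition moment--cumulant formula), and both get complete positivity from the same Gram-matrix computation and boundedness from $\norm{T}\le\norm{\cX}$.
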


\begin{proof}
Put on $\cB \langle \cX \rangle$ the $\cB$-valued inner product
\[
\ip{b_0 \cX \ldots \cX b_n}{c_0 \cX \ldots \cX c_k}_\mu
= \mu[c_k^\ast \cX c_{k-1}^\ast \cX \ldots \cX c_0^\ast b_0 \cX \ldots \cX b_{n-1} \cX b_n],
\]
and denote by $\cH$ the corresponding $\cB$-inner product bimodule. We will identify elements of $\cB \langle \cX \rangle$ with corresponding operators acting on $\cH$ on the left. Denote
\[
P : \zeta \mapsto \mu[\zeta]
\]
the orthogonal projection from $\cH$ onto $\cB \subset \cH$, and let $P^\perp = I - P$, so that
\[
P^\perp : \zeta \mapsto \zeta - \mu[\zeta].
\]
We write
\[
\xi = P^\perp \cX \cdot 1 = \cX - \mu[\cX] \in \cH.
\]
Finally, denote $T = P^\perp \cX P^\perp$; clearly $T$ is a symmetric operator on $\cH$.

Now using the fact that $P$ and $P^\perp$ commute with $\cB$ (in their actions on $\cH$), and
\[
\ip{\chi P \zeta \cdot 1}{1}_\mu = \mu[\chi] \mu[\zeta],
\]
we compute
\[
\begin{split}
\mu[b_0 \cX b_1 \ldots b_{n-1} \cX b_n]
& = b_0 \ip{\cX b_1 \ldots b_{n-1} \cX \cdot 1}{1}_\mu b_n \\
& = b_0 \ip{\cX b_1 (P + P^\perp) \cX b_2 \ldots b_{n-2} (P + P^\perp) \cX b_{n-1} (P + P^\perp) \cX \cdot 1}{1}_\mu b_n \\
& = \sum_{k=1}^{n} \sum_{1 \leq i_1 < i_2 < \ldots < i_k = n} b_0 \ip{\cX b_1 P^\perp \cX \ldots P^\perp \cX b_{i_1 - 1} P^\perp \cX \cdot 1}{1}_\mu b_{i_1} \\
&\qquad \ldots b_{i_{k-1}} \ip{\cX b_{i_{k-1} + 1} P^\perp \cX \ldots P^\perp \cX b_{n-1} P^\perp \cX \cdot 1}{1}_\mu b_n \\
& = \sum_{k=1}^{n} \sum_{1 \leq i_1 < i_2 < \ldots < i_k = n} b_0 \ip{b_1 T b_2 \ldots T b_{i_1 - 1} \xi}{\xi}_\mu b_{i_1} \\
&\qquad \ldots b_{i_{k-1}} \ip{b_{i_{k-1} + 1} T \ldots T b_{n-1} \xi}{\xi}_\mu b_n.
\end{split}
\]
Note that we have used the bimodule property of the $\mu$-inner product. Comparing with formula \eqref{Boolean-sum}, we see that $\mu = \mu_{(\lambda, \beta)}$ for
\[
\lambda = \ip{\cX \cdot 1}{1}_\mu = \mu[\cX],
\]
and
\[
\beta[b_1 \cX b_2 \ldots \cX b_n] = \ip{b_1 T b_2 \ldots T b_n \xi}{\xi}_\mu.
\]
Clearly $\lambda$ is symmetric, and since $T$ is symmetric, $\beta$ is positive. In fact,
\[
\begin{split}
& \sum_{i, j=1}^n c_i^\ast \beta[(b_{i,1} \cX b_{i,2} \ldots \cX b_{i, k(i)})^\ast (b_{j,1} \cX b_{j,2} \ldots \cX b_{j, k(j)})] c_j \\
&\qquad = \sum_{i,j=1}^n \ip{(b_{i,1} T b_{i,2} \ldots T b_{i, k(i)})^\ast (b_{j,1} T b_{j,2} \ldots T b_{j, k(j)}) \xi c_j}{\xi c_i}_\mu \\
&\qquad = \ip{\sum_{i=1}^n (b_{i,1} T b_{i,2} \ldots T b_{i, k(i)}) \xi c_i}{\sum_{i=1}^n (b_{i,1} T b_{i,2} \ldots T b_{i, k(i)}) \xi c_i}_\mu \geq 0,
\end{split}
\]
so $\beta$ is completely positive. If $\mu \in \Sigma^0(\cB)$, then, since $\norm{T} = \norm{P^\perp \cX P^\perp} \leq \norm{\cX}$, $\beta$ satisfies condition \eqref{Eq:Bounded}.
\end{proof}

\begin{theorem}  \label{thm:7.5}
For any $\mu \in \Sigma(\cB)$ and $\alpha \in \cCP(\cB)$, the functional $\mu^{\uplus \alpha}\in \Sigma(\cB)$. Moreover, if $\mu \in \Sigma^0(\cB)$, so is $\mu^{\uplus \alpha}$.
\end{theorem}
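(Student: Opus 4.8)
The plan is to realize $\mu^{\uplus\alpha}$ concretely inside the operator model of Construction~\ref{Construction:lambda-beta}. By Definition~\ref{def:4.9}, $\mu^{\uplus\alpha}$ is the element of $\Dist(\cB)$ uniquely determined by $B_{\mu^{\uplus\alpha}}=\alpha\circ B_\mu$, so it will suffice to exhibit an element of $\Sigma(\cB)$ (resp.\ of $\Sigma^0(\cB)$) having this $B$-transform. First I would invoke the preceding lemma to write $\mu=\mu_{(\lambda,\beta)}$ for a symmetric $\lambda\in\cB$ and a completely positive $\bC$-linear map $\beta:\cB\langle\cX\rangle\to\cB$, with $\beta$ satisfying the boundedness condition~\eqref{Eq:Bounded} in the case $\mu\in\Sigma^0(\cB)$. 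By the lemma computing the Boolean cumulants of $\mu_{(\lambda,\beta)}$, this gives $B_\mu^{[1]}=\lambda$ and $B_\mu^{[n]}(b_1,\dots,b_{n-1})=\beta[b_1\cX\cdots\cX b_{n-1}]$ for $n\ge2$.

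Next I would set $\lambda':=\alpha(\lambda)$ and $\beta':=\alpha\circ\beta:\cB\langle\cX\rangle\to\cB$ and check that this pair is again admissible for Construction~\ref{Construction:lambda-beta}. Since a completely positive map on a $C^\ast$-algebra is $\ast$-preserving, $\lambda'$ is symmetric; and $\beta'$, being a composition of completely positive maps, is completely positive (for each $k\ge1$, $(\alpha\circ\beta)\otimes\mathrm{id}_k=(\alpha\otimes\mathrm{id}_k)\circ(\beta\otimes\mathrm{id}_k)$ carries positive elements of $M_k(\cB\langle\cX\rangle)$ to positive elements of $M_k(\cB)$). Hence Lemma~\ref{Lemma:Boolean-Fock} gives $\mu_{(\lambda',\beta')}\in\Sigma(\cB)$, and applying the Boolean-cumulant lemma to $(\lambda',\beta')$ yields $B_{\mu_{(\lambda',\beta')}}^{[1]}=\alpha(\lambda)$ and $B_{\mu_{(\lambda',\beta')}}^{[n]}(b_1,\dots,b_{n-1})=\alpha(\beta[b_1\cX\cdots\cX b_{n-1}])$. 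Comparing with the cumulants of $\mu$ above, this is exactly $B_{\mu_{(\lambda',\beta')}}=\alpha\circ B_\mu$, and therefore $\mu^{\uplus\alpha}=\mu_{(\lambda',\beta')}\in\Sigma(\cB)$.

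For the last assertion, assuming $\mu\in\Sigma^0(\cB)$, the preceding lemma provides a constant $M$ (which I may take large enough that $\|\lambda\|\le M$ as well) with $\|\beta[\cX b_1\cX\cdots\cX b_{n-1}\cX]\|\le M^n\|b_1\|\cdots\|b_{n-1}\|$. Then
\[
\|\beta'[\cX b_1\cX\cdots\cX b_{n-1}\cX]\|\le\|\alpha\|\,M^n\|b_1\|\cdots\|b_{n-1}\|\le(M')^n\|b_1\|\cdots\|b_{n-1}\|
\]
with $M':=M\max(1,\|\alpha\|)$, and $\|\lambda'\|\le\|\alpha\|\,\|\lambda\|\le M'$, so $(\lambda',\beta')$ satisfies the boundedness hypothesis in Lemma~\ref{Lemma:Boolean-Fock}; hence $\mu^{\uplus\alpha}=\mu_{(\lambda',\beta')}\in\Sigma^0(\cB)$.

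I do not expect a genuine obstacle: once the operator model is in hand the argument is just bookkeeping, and the only points that need a moment's care are that $\alpha\circ\beta$ stays completely positive and that its norm estimate can be tracked through the composition. The real content — that $\mu^{\uplus\alpha}$ admits an operator model at all — has already been done in Lemma~\ref{Lemma:Boolean-Fock}; here one simply observes that it applies verbatim to the pair $(\alpha(\lambda),\alpha\circ\beta)$.
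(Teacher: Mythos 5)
Your proposal is correct and follows the same route as the paper: realize $\mu=\mu_{(\lambda,\beta)}$ via the operator model, replace $(\lambda,\beta)$ by $(\alpha[\lambda],\alpha\circ\beta)$, and identify the resulting Boolean cumulants with $\alpha\circ B_\mu$, with boundedness of $\alpha\circ\beta$ handling the $\Sigma^0(\cB)$ case. Your write-up merely spells out the details (self-adjointness of $\alpha(\lambda)$, complete positivity of the composition, and the norm estimate) that the paper leaves implicit.
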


\begin{proof}
By the preceding lemma, $\mu = \mu_{(\lambda, \beta)}$ for some $(\lambda, \beta)$. Moreover, replacing the pair $(\lambda, \beta)$ with the pair $(\alpha[\lambda], \alpha \circ \beta)$ gives another pair of the same type, so that for $\mu_{(\alpha[\lambda], \alpha \circ \beta)}$,
\[
B_{(\alpha[\lambda], \alpha \circ \beta)}^{[n]}(b_1, b_2, \ldots, b_{n-1}) = \alpha\left[ B_{(\lambda, \beta)}^{[n]}(b_1, b_2, \ldots, b_{n-1}) \right]
\]
for any $n \geq 1$. So by Definition~\ref{def:4.9},
\[
\mu_{(\alpha[\lambda], \alpha \circ \beta)} = \mu_{(\lambda, \beta)}^{\uplus \alpha}.
\]
The statement about $\Sigma^0(\cB)$ also follows from the preceding lemmas.
\end{proof}

\begin{corollary}
For any freely infinitely divisible $\mu \in \Sigma(\cB)$ and any $\alpha \in \cCP(\cB)$, the functional $\mu^{\boxplus \alpha}\in \Sigma(\cB)$. Moreover, if $\mu \in \Sigma^0(\cB)$, so is $\mu^{\boxplus \alpha}$.
\end{corollary}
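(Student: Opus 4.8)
The plan is to reduce the assertion, via the Boolean‑to‑free Bercovici--Pata bijection $\bB$ of Definition~\ref{Definition:BP}, to the Boolean statement already established in Theorem~\ref{thm:7.5}. The algebraic core of the argument is the following identity. Suppose $\nu\in\Dist(\cB)$ satisfies $\bB[\nu]=\mu$; by Definition~\ref{Definition:BP} this means $R_\mu=B_\nu$. Then, for an arbitrary linear map $\alpha:\cB\to\cB$,
\[
R_{\mu^{\boxplus\alpha}}=\alpha\circ R_\mu=\alpha\circ B_\nu=B_{\nu^{\uplus\alpha}}=R_{\bB[\nu^{\uplus\alpha}]},
\]
using Definition~\ref{def:4.9} for both convolution powers and Definition~\ref{Definition:BP} at the two ends. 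Since $\mu\mapsto R_\mu$ is injective on $\Dist(\cB)$, this gives the identity $\mu^{\boxplus\alpha}=\bB[\nu^{\uplus\alpha}]$ in $\Dist(\cB)$.

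First I would exploit the hypothesis that $\mu$ is freely infinitely divisible to obtain a preimage $\nu$ that already lies in $\Sigma(\cB)$. This is exactly the content of the operator‑valued Bercovici--Pata correspondence \cite{BPV2010} recalled in Remark~\ref{rem:6.2}: $\bB$ carries $\Sigma(\cB)$ into $\Sigma(\cB)$, and a distribution in $\Sigma(\cB)$ is $\boxplus$‑infinitely divisible precisely when its $\bB$‑preimage again belongs to $\Sigma(\cB)$. Hence there is $\nu\in\Sigma(\cB)$ with $\bB[\nu]=\mu$. Theorem~\ref{thm:7.5} then gives $\nu^{\uplus\alpha}\in\Sigma(\cB)$ for every $\alpha\in\cCP(\cB)$, and applying $\bB$ once more (again by \cite{BPV2010}) yields $\mu^{\boxplus\alpha}=\bB[\nu^{\uplus\alpha}]\in\Sigma(\cB)$, which is the first assertion.

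For the $\Sigma^0(\cB)$ refinement I would track norm estimates through the same chain. Membership in $\Sigma^0(\cB)$ is, for a distribution already in $\Sigma(\cB)$, equivalent to a bound of the form \eqref{Eq:Bounded} on its moment series; via the summation formulas defining $\RtoM$ and $\EtatoM$ (Notations~\ref{def:4.2} and \ref{def:4.5}) together with their inverses (Remarks~\ref{rem:4.3} and \ref{rem:4.6}), and using the elementary cardinality bounds for $NC(n)$ and $\Int(n)$, such a bound on the moment series is equivalent to the corresponding bounds on the $R$‑ and on the $B$‑series. Since $R_\mu=B_\nu$, the assumption $\mu\in\Sigma^0(\cB)$ propagates to $\nu\in\Sigma^0(\cB)$ (using also $\nu\in\Sigma(\cB)$ from the previous step); Theorem~\ref{thm:7.5} then gives $\nu^{\uplus\alpha}\in\Sigma^0(\cB)$, so $R_{\mu^{\boxplus\alpha}}=B_{\nu^{\uplus\alpha}}$ satisfies the required estimate, and therefore $\mu^{\boxplus\alpha}$ — which we already know to lie in $\Sigma(\cB)$ — lies in $\Sigma^0(\cB)$.

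The routine parts are the algebraic identity and the bookkeeping with growth estimates; the genuine obstacle is the reliance on the analytic Bercovici--Pata theorem \cite{BPV2010}, both to manufacture $\nu\in\Sigma(\cB)$ out of the infinite‑divisibility hypothesis and for the fact that $\bB$ preserves $\Sigma(\cB)$ and $\Sigma^0(\cB)$. I note that the preservation property is in any case reproved within this paper as the case $\alpha=1$ of Theorem~\ref{thm:7.8}, so an alternative route avoiding \cite{BPV2010} for that half is available, at the cost of appealing to a result stated later in the paper.
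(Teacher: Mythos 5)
Your argument is correct and takes essentially the same route as the paper: the paper likewise invokes Theorem 3.4 of \cite{BPV2010} (that $\bB$ maps $\Sigma(\cB)$, respectively $\Sigma^0(\cB)$, bijectively onto the freely infinitely divisible elements and intertwines $\uplus$ with $\boxplus$ -- the identity $\mu^{\boxplus\alpha}=\bB[\nu^{\uplus\alpha}]$ you compute explicitly) and then applies Theorem~\ref{thm:7.5}. The only difference is minor: for the $\Sigma^0(\cB)$ statement the paper simply cites the $\Sigma^0$-preservation from \cite{BPV2010}, whereas you re-derive it by transferring exponential moment bounds through the $R$- and $B$-series via the $NC(n)$ and $\Int(n)$ summation formulas, which is a valid, somewhat more laborious, substitute.
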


\begin{proof}
Theorem~3.4 of \cite{BPV2010} shows that the Bercovici-Pata bijection $\bB$ from Definition~\ref{Definition:BP} maps $\Sigma(\cB)$ bijectively onto freely infinitely divisible elements of $\Sigma(\cB)$, $\Sigma^0(\cB)$ bijectively onto freely infinitely divisible elements of $\Sigma^0(\cB)$, and intertwines $\uplus$ and $\boxplus$. So the result follows from the preceding proposition.
\end{proof}

\begin{remark}
\label{remark:free-Fock}
The preceding corollary can also be proved directly. The full Fock module over $\cB_0 \langle \cX \rangle$ is the $\cB$-bimodule
\[
\cF(\cB_0 \langle \cX \rangle) = \bigoplus_{k=0}^\infty \cB_0 \langle \cX \rangle^{\otimes_{\cB}^k} = \cB \oplus \cB_0 \langle \cX \rangle \oplus (\cB_0 \langle \cX \rangle \otimes_\cB \cB_0 \langle \cX \rangle) \oplus \ldots.
\]
Given $\lambda$ and $\beta$ as in Construction~\ref{Construction:lambda-beta}, on $\cF(\cB_0 \langle \cX \rangle)$, define the $\cB$-valued inner product by
\[
\ip{\xi_1 \otimes \ldots \otimes \xi_k}{\zeta_1 \otimes \ldots \otimes \zeta_n}
= \delta_{kn} \ip{\xi_n}{\ip{\xi_{n-1}}{\ldots \ip{\xi_1}{\zeta_1}_\beta \ldots\zeta_{n-1}}_\beta \zeta_n}_\beta,
\]
where $\ip{\cdot}{\cdot}_\beta$ is given in formula \eqref{Beta-inner-product}. Again, positivity of the inner product follows from complete positivity of $\beta$; compare with Definition~4.6.5 from \cite{S1998}.
On $\cF(\cB_0 \langle \cX \rangle)$, we define maps
\[
a^\ast (\xi_1 \otimes \ldots \otimes \xi_n) = \cX \otimes \xi_1 \otimes \ldots \otimes \xi_n,
\]
\[
p (\xi_1 \otimes \ldots \otimes \xi_n) = \cX \xi_1 \otimes \ldots \otimes \xi_n,
\]
and
\[
a (\xi_1 \otimes \ldots \otimes \xi_n) = \ip{\xi_1}{\cX}_\beta \xi_2 \otimes \ldots \otimes \xi_n.
\]
Then as in Lemma~\ref{Lemma:Boolean-Fock}, $p$, $a^\ast + a$, and
\[
X = a^\ast + a + p + \lambda
\]
are symmetric, and the functional $\mu$ defined via
\[
\mu[b_0 \cX \ldots \cX b_n] = \ip{(b_0 X \ldots X b_n) 1}{1}
\]
is a conditional expectation. By definition (cf. Section 4.7 of \cite{S1998} or Section 3 of \cite{PV2010}), the conditional expectations arising in this construction are precisely all the freely infinitely divisible ones. Moreover, this $\mu = \bB[\mu_{(\lambda, \beta)}]$, and the conditional expectation arising from $(\alpha[\lambda], \alpha \circ \beta)$ is $\mu^{\boxplus \alpha}$.
\end{remark}

\begin{theorem}   \label{thm:7.8}
For each $\alpha \in \cCP(\cB)$, $\bB_\alpha$ maps $\Sigma(\cB)$
to itself.
\end{theorem}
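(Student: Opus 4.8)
The plan is to exhibit $\bB_\alpha(\mu)$ as the distribution of a symmetric operator in an explicit operator model, in the same spirit as the corollary to Theorem~\ref{thm:7.5} (which does this for $\mu^{\boxplus\alpha}$). By the structure lemma preceding Theorem~\ref{thm:7.5}, write $\mu = \mu_{(\lambda,\beta)}$ with $\lambda = \lambda^\ast \in \cB$ and $\beta : \cB\langle\cX\rangle\to\cB$ completely positive; since $\alpha \in \cCP(\cB)$ is bounded and $\ast$-preserving, $\alpha\circ\beta$ is again completely positive and $\alpha(\lambda) = \alpha(\lambda)^\ast$. Recall $B_\mu^{[1]} = \lambda$ and $B_\mu^{[n]}(b_1,\dots,b_{n-1}) = \beta[b_1\cX\cdots\cX b_{n-1}]$, so that $\alpha\circ B_\mu$ is the $B$-transform attached to the pair $(\alpha(\lambda),\alpha\circ\beta)$; note also $\alpha\circ B_\mu = \alpha\circ R_{\bB[\mu]} = R_{(\bB[\mu])^{\boxplus\alpha}}$, and $(\bB[\mu])^{\boxplus\alpha} \in \Sigma(\cB)$ by the corollary to Theorem~\ref{thm:7.5}, since $\bB[\mu]$ is freely infinitely divisible.

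First I would compute the moment series of $\bB_\alpha(\mu)$ combinatorially. From Proposition~\ref{prop:6.5}, $B_{\bB_\alpha(\mu)} = \Reta_\alpha(B_\mu)$, hence $M_{\bB_\alpha(\mu)} = \EtatoM(\Reta_\alpha(B_\mu))$. Expanding $\EtatoM$ over interval partitions and $\Reta_\alpha$ as in Definition~\ref{def:5.5}, and using---exactly as in the proof of Proposition~\ref{prop:5.4}---that every $\pi \in NC(n)$ lies below a unique interval partition in the order $\ll$, the double sum collapses to
\[
M_{\bB_\alpha(\mu)}^{[n]}(b_1,\dots,b_{n-1}) = \sum_{\pi \in NC(n)} (B_\mu,\ \alpha\circ B_\mu)^{[\pi, c_\pi]}(b_1,\dots,b_{n-1}),
\]
where $c_\pi$ is the colouring assigning the label of $B_\mu$ to the outer (non-nested) blocks of $\pi$ and that of $\alpha\circ B_\mu$ to the inner ones; here one uses Remark~\ref{rem:5.1} and Proposition~\ref{prop:5.8} to identify the $\rho$-special blocks, $\rho$ being the unique interval partition above $\pi$, with the outer blocks of $\pi$.

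Next I would build a Fock-type operator model reproducing this formula. The candidate is a two-layer $\cB$-bimodule over $\cB_0\langle\cX\rangle$ modelled on Remark~\ref{remark:free-Fock}: the $\cB$-valued inner product uses $\ip{\cdot}{\cdot}_\beta$ on the layer adjacent to the vacuum and $\ip{\cdot}{\cdot}_{\alpha\circ\beta}$ on all deeper layers (so positivity of the inner product follows from complete positivity of $\beta$ and of $\alpha\circ\beta$), and the operator is $X = a^\ast + a + p + \Lambda$, where $\Lambda$ acts as left multiplication by $\lambda$ on the vacuum-plus-first-layer subspace and by $\alpha(\lambda)$ on the deeper layers. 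As in Lemma~\ref{Lemma:Boolean-Fock}, $p$, $a^\ast + a$, and $\Lambda$ are symmetric, so $X$ is symmetric, and $b_0\cX\cdots\cX b_n \mapsto \ip{(b_0 X \cdots X b_n)1}{1}$ is a conditional expectation, hence an element of $\Sigma(\cB)$ (and of $\Sigma^0(\cB)$ if $\beta$ is bounded, since then $\alpha\circ\beta$ is bounded and $X$ is bounded). Counting paths on this module as in Lemma~\ref{Lemma:Boolean-Fock} and Remark~\ref{remark:free-Fock}, its moment series is precisely the sum over $NC(n)$ above: blocks created at the first layer (the non-nested ones) carry terms of $(\lambda,\beta) = B_\mu$, and deeper blocks carry terms of $(\alpha(\lambda),\alpha\circ\beta) = \alpha\circ B_\mu$. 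Hence this distribution equals $\bB_\alpha(\mu)$, which is therefore in $\Sigma(\cB)$; the cases $\alpha = 0$ (the deeper layers collapse and one recovers Lemma~\ref{Lemma:Boolean-Fock}) and $\alpha = 1$ (one recovers Remark~\ref{remark:free-Fock}, i.e. $\bB_1 = \bB$) serve as consistency checks.

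The main obstacle is the bookkeeping that ties the layer structure of the module to the outer/inner colouring of non-crossing partitions---in particular getting the depth-versus-nesting correspondence right, and making sure the first-cumulant contribution is produced correctly, which is exactly why $\Lambda$ must be taken layer-dependent rather than a plain multiple of $\lambda$. Once the path count on the module is matched with the $\EtatoM \circ \Reta_\alpha$ expansion of the previous step, the remaining verifications are routine. (An equivalent route, avoiding a new Fock module, is to apply Lemma~\ref{Lemma:Boolean-Fock} to $\mu_{(\lambda,\widetilde\beta)}$ for a suitable completely positive $\widetilde\beta$ obtained by substituting the moments of $(\bB[\mu])^{\boxplus\alpha}$ into the gaps of $\beta$; the combinatorial content is the same.)
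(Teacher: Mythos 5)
Your proposal is essentially the paper's own proof: the paper also writes $\mu=\mu_{(\lambda,\beta)}$, modifies the full Fock module of Remark~\ref{remark:free-Fock} so that the contraction adjacent to the vacuum uses $\ip{\cdot}{\cdot}_\beta$ while all deeper contractions use $\ip{\cdot}{\cdot}_{\alpha\circ\beta}$, adds a depth-dependent diagonal term, and identifies the resulting distribution with $\bB_\alpha(\mu)$ by checking $B_{\mu_\alpha}=\Reta_\alpha(B_\mu)$ (you do the matching at the level of moments via $\EtatoM\circ\Reta_\alpha$ collapsed over $NC(n)$ with the outer/inner colouring, which is the same computation). One bookkeeping caveat, at exactly the spot you flag as delicate: with the paper's conventions ($a^\ast$ prepends, $a$ and $p$ act on the first tensor factor), a singleton nested directly inside the outer block is met while the vector is a length-one tensor, so the diagonal term must be $\lambda$ on the vacuum only and $\alpha(\lambda)$ on all tensors of length $\ge 1$; taking it to be $\lambda$ on the ``vacuum-plus-first-layer'' produces $\beta[b_1\lambda b_2]$ instead of the required $\beta[b_1\alpha(\lambda)b_2]$ in the third Boolean cumulant (the paper's own statement of $L$ reads the same way and should be corrected identically), a fix which does not affect the positivity argument.
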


\begin{proof}
Let $\mu \in \Sigma(\cB)$, so that $\mu = \mu_{(\lambda, \beta)}$ for some $\lambda, \beta$ as in Construction~\ref{Construction:lambda-beta}. Modify the construction in Remark~\ref{remark:free-Fock} as follows. On $\cF(\cB_0 \langle \cX \rangle)$, define the $\cB$-valued inner product by
\[
\ip{\xi_1 \otimes \ldots \otimes \xi_k}{\zeta_1 \otimes \ldots \otimes \zeta_n}
= \delta_{kn} \ip{\xi_n}{\ip{\xi_{n-1}}{\ldots \ip{\xi_1}{\zeta_1}_{\alpha \circ \beta} \ldots\zeta_{n-1}}_{\alpha \circ \beta} \zeta_n}_\beta,
\]
in particular
\[
\ip{\xi}{\zeta} = \ip{\xi}{\zeta}_\beta.
\]
Keep $a^\ast$, $p$, and $a(\xi)$ the same, and let
\[
a (\xi_1 \otimes \ldots \otimes \xi_n) = \ip{\xi_1}{\cX}_{\alpha \circ \beta} \xi_2 \otimes \ldots \otimes \xi_n
\]
for $n \geq 2$. Also, let $L(\xi) = \lambda \xi$ while $L(\xi_1 \otimes \ldots \otimes \xi_n) = \alpha[\lambda] \xi_1 \otimes \ldots \otimes \xi_n$ for $n \geq 2$. Then again, $X = a^\ast + a + p + L$ is symmetric, and the functional $\mu_\alpha$ defined via
\[
\mu_\alpha[b_0 \cX \ldots \cX b_n] = \ip{(b_0 X \ldots X b_n) 1}{1}
\]
is in $\Sigma(\cB)$. But now it is easy to check that $B_{\mu_\alpha} = \Reta_\alpha(B_\mu)$, which implies that $\mu_\alpha = \bB_\alpha(\mu)$.
\end{proof}

\begin{theorem}
\label{Thm:alpha-geq-1}
Let $\mu \in \Sigma(\cB)$ and $\alpha \in \cCP(\cB)$ such that $\alpha - \iB$ is completely positive.
Then the functional $\mu^{\boxplus \alpha} \in \Sigma(\cB)$. If $\mu \in \Sigma^0(\cB)$, so is $\mu^{\boxplus \alpha}$.
\end{theorem}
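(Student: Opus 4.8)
The plan is to reduce the assertion to the two positivity results already established by means of operator models, namely Theorem~\ref{thm:7.5} and Theorem~\ref{thm:7.8}, using the factorization of $\boxplus$-powers through the transformations $\bB_{\alpha}$ recorded in Proposition~\ref{prop:6.6}. Write $\alpha_{0} := \alpha - \iB$; by hypothesis $\alpha_{0} \in \cCP(\cB)$, and since the identity map $\iB$ is completely positive it follows that $\alpha = \iB + \alpha_{0} \in \cCP(\cB)$ as well (so the hypothesis $\alpha \in \cCP(\cB)$ in the statement is in fact automatic; it is nonetheless the form we shall use below).

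First I would apply Equation~\eqref{eqn:6.61} of Proposition~\ref{prop:6.6} with its linear map taken to be $\alpha_{0}$. Since $\iB + \alpha_{0} = \alpha$, this yields the identity
\[
\Bigl( \, \bB_{\alpha_{0}} ( \mu ) \, \Bigr)^{\uplus \alpha} = \mu^{\boxplus \alpha}
\]
as elements of $\Dist ( \cB )$; note that Equation~\eqref{eqn:6.61} holds for every linear map, so no invertibility hypothesis enters here. Next, since $\alpha_{0} \in \cCP(\cB)$, Theorem~\ref{thm:7.8} gives $\bB_{\alpha_{0}} ( \mu ) \in \Sigma ( \cB )$. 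Finally, since $\alpha \in \cCP(\cB)$, Theorem~\ref{thm:7.5} applied to the distribution $\bB_{\alpha_{0}} ( \mu ) \in \Sigma ( \cB )$ shows that $\bigl( \bB_{\alpha_{0}} ( \mu ) \bigr)^{\uplus \alpha} \in \Sigma ( \cB )$. Combining with the displayed identity, $\mu^{\boxplus \alpha} \in \Sigma ( \cB )$, which is the first claim.

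For the claim about $\Sigma^{0} ( \cB )$ the same chain of reasoning works, once one checks that each of the two steps preserves the boundedness condition~\eqref{Eq:Bounded}. Theorem~\ref{thm:7.5} already records that $\mu^{\uplus \alpha} \in \Sigma^{0}(\cB)$ whenever $\mu \in \Sigma^{0}(\cB)$, so it remains only to see that $\mu \mapsto \bB_{\alpha_{0}}(\mu)$ sends $\Sigma^{0}(\cB)$ into itself; for this I would revisit the operator model built in the proof of Theorem~\ref{thm:7.8} (the modification of the full Fock module construction of Remark~\ref{remark:free-Fock}): if $\mu = \mu_{(\lambda, \beta)} \in \Sigma^{0}(\cB)$, then, as in the lemma immediately preceding Theorem~\ref{thm:7.5}, the pair $(\lambda, \beta)$ may be chosen with $\beta$ satisfying~\eqref{Eq:Bounded}, whence $\alpha \circ \beta$ and $\alpha[\lambda]$ do too, the operator $X = a^{*} + a + p + L$ produced there is bounded, and $\bB_{\alpha_{0}}(\mu) \in \Sigma^{0}(\cB)$. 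Feeding this into Theorem~\ref{thm:7.5} as before gives $\mu^{\boxplus \alpha} \in \Sigma^{0}(\cB)$. Since the substantive work has already been carried out in proving Proposition~\ref{prop:6.6} and Theorems~\ref{thm:7.5} and~\ref{thm:7.8}, no real obstacle remains; the only point needing a little care is precisely this $\Sigma^{0}$ bookkeeping, i.e. verifying that the norm estimates of Lemma~\ref{Lemma:Boolean-Fock} and of the free Fock construction propagate through the composition of the two operations. (Alternatively one could bypass the factorization and write down a single self-contained operator model for $\mu^{\boxplus \alpha}$, combining the free Fock module of Remark~\ref{remark:free-Fock} with the rescaling by $\alpha$ exactly as in the proof of Theorem~\ref{thm:7.8}; but the factorization route is the shortest.)
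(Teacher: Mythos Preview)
Your proposal is correct and follows essentially the same route as the paper: the paper's proof is a two-line argument invoking Equation~\eqref{eqn:6.61} together with Theorems~\ref{thm:7.8} and~\ref{thm:7.5}, exactly as you do (writing $\alpha_{0}=\alpha-\iB$ and reading the identity as $\mu^{\boxplus\alpha}=(\bB_{\alpha_{0}}(\mu))^{\uplus\alpha}$). Your treatment of the $\Sigma^{0}(\cB)$ claim is in fact more careful than the paper's, which simply asserts ``the boundedness also follows'' without noting that Theorem~\ref{thm:7.8} as stated does not mention $\Sigma^{0}(\cB)$; your remark that one must revisit the Fock-module model to see the operator $X$ is bounded when $\beta$ satisfies~\eqref{Eq:Bounded} is the right way to fill that gap.
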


\begin{proof}
The left-hand-side of the formula \eqref{eqn:6.61} in Proposition~\ref{prop:6.6} is well-defined and positive by the preceding proposition, therefore so is the right-hand-side. The boundedness also follows.
\end{proof}

\begin{remark}
Note that we assume $\mu[1] = 1$, and $\mu$ is a $\cB$-bimodule map, so the restriction $\mu |_\cB$ is the identity map, and $\mu$ is an analog of a probability measure. On the other hand, $\beta$ is not necessarily a $\cB$-bimodule map, and the restriction $\beta |_\cB$ is a general completely positive map. So $\beta$ is an analog of a general finite measure.
\end{remark}

\begin{corollary}   \label{cor:7.11}
For completely positive $\beta$,
\[
\Phi[\beta] = \mu_{(0, \beta)}.
\]
So $\Phi$ is also a bijection
\[
\begin{split}
\Phi : \set{\bC\text{-linear, completely positive } \beta: \cB \langle \cX \rangle \rightarrow \cB}
\rightarrow \set{\mu \in \Sigma(\cB) \text{ s.t. } \mu |_{\cB \cX \cB} = 0}.
\end{split}
\]
with a restriction to $\beta$ satisfying \eqref{Eq:Bounded} and $\mu \in \Sigma^0(\cB)$. It also restricts to a bijection
\begin{multline*}
\Phi : \set{\bC\text{-linear, completely positive } \beta: \cB \langle \cX \rangle \rightarrow \cB \text{ s.t. } \beta |_\cB = I} \\
\rightarrow \set{\mu \in \Sigma(\cB) \text{ s.t. } \mu |_{\cB \cX \cB} = 0, \mu[\cX b \cX] = b}.
\end{multline*}
\end{corollary}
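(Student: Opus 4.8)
The plan is to derive everything from the single identity $\Phi[\beta] = \mu_{(0,\beta)}$, valid for every $\bC$-linear completely positive $\beta : \cB\langle\cX\rangle \to \cB$. To prove it I would compare $B$-transforms. On one side, Definition~\ref{def:6.8} gives $B_{\Phi[\beta]}^{[1]} = 0$ and $B_{\Phi[\beta]}^{[n]}[b_1,\dots,b_{n-1}] = \beta[b_1 \cX \cdots \cX b_{n-1}]$ for $n \ge 2$. On the other side, the lemma giving the Boolean cumulant functionals $B_{(\lambda,\beta)}$ of $\mu_{(\lambda,\beta)}$, specialized to the (symmetric) element $\lambda = 0$, gives exactly $B_{(0,\beta)}^{[1]} = 0$ and $B_{(0,\beta)}^{[n]}(b_1,\dots,b_{n-1}) = \beta[b_1 \cX \cdots \cX b_{n-1}]$. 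Since $\mu \mapsto B_{\mu} = \EtatoM^{-1}(M_{\mu})$ is a bijection on $\Dist(\cB)$ and both $\Phi[\beta]$ and $\mu_{(0,\beta)}$ lie in $\Dist(\cB)$, the agreement of $B$-transforms forces $\Phi[\beta] = \mu_{(0,\beta)}$. Lemma~\ref{Lemma:Boolean-Fock} (applied with $\lambda = 0$) then puts this distribution in $\Sigma(\cB)$, and in $\Sigma^0(\cB)$ when $\beta$ satisfies the boundedness condition~\eqref{Eq:Bounded}.

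Next I would promote this to the asserted bijections. That $\Phi$ maps every completely positive $\beta$ into $\{\mu \in \Sigma(\cB) : \mu|_{\cB\cX\cB} = 0\}$ is immediate from the previous paragraph together with the fact, recorded in Definition~\ref{def:6.8}, that $\Phi$ already lands among unital $\cB$-bimodule maps $\mu$ with $\mu|_{\cB\cX\cB} = 0$ (equivalently, $B_{\Phi[\beta]}^{[1]} = 0$ forces $\Phi[\beta][\cX] = M_{\Phi[\beta]}^{[1]} = 0$, hence $\Phi[\beta]|_{\cB\cX\cB} = 0$ by the bimodule property). For surjectivity I would take an arbitrary $\mu \in \Sigma(\cB)$ with $\mu|_{\cB\cX\cB} = 0$ and invoke the structural lemma that every conditional expectation has the form $\mu_{(\lambda,\beta')}$ with $\lambda \in \cB$ symmetric and $\beta'$ completely positive; the hypothesis $\mu|_{\cB\cX\cB} = 0$ forces $\lambda = \mu[\cX] = 0$, so $\mu = \mu_{(0,\beta')} = \Phi[\beta']$ by the identity above, and $\beta'$ is therefore the (completely positive) $\Phi$-preimage of $\mu$. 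Injectivity of the restricted map is inherited from the injectivity of $\Phi$ on the set of all $\bC$-linear maps $\cB\langle\cX\rangle \to \cB$. The $\Sigma^0(\cB)$ version is identical, using that the structural lemma produces a $\beta'$ satisfying~\eqref{Eq:Bounded} when $\mu \in \Sigma^0(\cB)$, and that Lemma~\ref{Lemma:Boolean-Fock} gives the reverse implication.

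For the final, more restrictive bijection, the normalization $\beta|_{\cB} = I$ must be matched to a condition on $\mu = \Phi[\beta]$. I would compute the second moment: from the definition of $\EtatoM$ one has $M_\mu^{[2]}(b) = B_\mu^{[2]}(b) + B_\mu^{[1]}\, b\, B_\mu^{[1]}$, so since $B_{\Phi[\beta]}^{[1]} = 0$ this reduces to $\Phi[\beta][\cX b \cX] = M_{\Phi[\beta]}^{[2]}(b) = B_{\Phi[\beta]}^{[2]}(b) = \beta[b] = (\beta|_{\cB})(b)$. Hence, inside the bijection of the previous paragraph, the constraint $\beta|_{\cB} = I$ corresponds precisely to $\mu[\cX b \cX] = b$ for all $b \in \cB$, giving the stated restriction. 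The whole argument is bookkeeping; the only genuinely external ingredient is the structural lemma (every $\mu \in \Sigma(\cB)$ is some $\mu_{(\lambda,\beta)}$ with $\beta$ completely positive), which powers surjectivity, and the only subtlety is the index shift in Definition~\ref{def:6.8}: for $n \ge 2$ the term $B_{\Phi[\beta]}^{[n]}$ encodes $\beta$ on the level $\cB \cX \cdots \cX \cB$ with $n-2$ copies of $\cX$, so $B_{\Phi[\beta]}^{[2]}$ sees exactly $\beta|_{\cB}$, which is what the last computation relies on.
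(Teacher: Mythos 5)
Your proposal is correct and follows exactly the route the paper intends (the corollary is left essentially without explicit proof there): identify $\Phi[\beta]$ with $\mu_{(0,\beta)}$ by matching the $B$-transform prescription in Definition~\ref{def:6.8} with the Boolean cumulant lemma for $\mu_{(\lambda,\beta)}$ at $\lambda=0$, get positivity and boundedness from Lemma~\ref{Lemma:Boolean-Fock}, and get surjectivity from the structural lemma that every $\mu\in\Sigma(\cB)$ is some $\mu_{(\lambda,\beta')}$ with $\lambda=\mu[\cX]=0$ when $\mu|_{\cB\cX\cB}=0$. Your computation $\mu[\cX b\cX]=B^{[2]}_{\Phi[\beta]}(b)=\beta|_{\cB}(b)$, which matches the normalization $\beta|_{\cB}=I$ with the condition $\mu[\cX b\cX]=b$, is also the right bookkeeping for the last restriction.
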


\begin{remark}
For $\beta$ completely positive but not necessarily a $\cB$-bimodule or a unital map, $\bB_\alpha[\Phi[\beta]] \in \Sigma(\cB)$ and is centered, so by the preceding corollary it is in the image of $\Phi$. Therefore following
Theorem~\ref{thm:7.11}, one can \emph{define}
\[
\beta \mapsto \beta \boxplus \gamma_\alpha = (\Phi^{-1} \circ \bB_\alpha \circ \Phi)[\beta],
\]
and this transformation preserved complete positivity. One can define the same extension of the free convolution operation using combinatorics, but in that case positivity is unclear.
\end{remark}

\section{Analytic aspects: analytic subordination and the
operator-valued inviscid Burgers equation}

\setcounter{equation}{0}
\setcounter{theorem}{0}

This section is dedicated to a brief outline of some analytic
consequences and aspects of our previous results.

\subsection{The operator-valued analogue of the free heat equation}
One of the fundamental results of Voiculescu was finding
the free analogue of the heat equation. If $X=X^*$ is free
from the centered semicircular random variable $S$ of variance
one, then
$$
\frac{\partial G(t,z)}{\partial t}+G(t,z)\frac{\partial G(t,z)}{
\partial z}=0,\quad \Im z>0,t>0,
$$
where
$$
G(t,z)=G_{X+\sqrt{t}S}(z)=\int_\mathbb R\frac{1}{z-x}\,d\mu_{X+
\sqrt{t}S}.
$$
We shall naturally extend this to the case when $X$ and $S$ are
free over $\mathcal B$, the $\mathcal B$-valued centered semicircular
random variable $S$ has variance $\eta$ and its evolution is as before
according to (completely) positive maps $\rho\colon\mathcal B\to
\mathcal B$.

We shall consider maps
$\mathcal{CP}(\mathcal B)\times\mathcal B\to\mathcal B$
analytic on some open set in the second coordinate, and G\^{a}teaux
differerentiable in the first.
\begin{proposition}\label{prop:8.1}
Assume that the map $h$ satisfies
\begin{equation}\label{eq:8.1}
h(\eta,b)=h_0(b+\eta(h(\eta,b))),\quad \eta\in\mathcal{CP}(\mathcal B),
\Im b>0.
\end{equation}
If $h_0$ is analytic on the set $\{b\in\mathcal B\colon\Im b>0\}$
and $h(\eta,b)\colon\mathcal{CP}(\mathcal B)\times
\{b\in\mathcal B\colon\Im b>0\}\to\{b\in\mathcal B\colon\Im b>0\}$,
then the following equation is satisfied:
$$
\frac{\partial h(\eta,b)}{\partial\eta}(\rho)-
\frac{\partial h(\eta,b)}{\partial b}(\rho(h(\eta,b)))=0,
$$
where $\rho\in\mathcal{CP}(\mathcal B)$, $\eta\in{\rm Int}(
\mathcal{CP}(\mathcal B))$, and $\Im b>0$. The derivative
with respect to $b$ is in the Fr\'echet sense and the derivative
with respect to $\eta$ is taken in the G\^{a}teaux
sense.
\end{proposition}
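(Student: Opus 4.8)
The plan is to differentiate the fixed-point equation \eqref{eq:8.1} implicitly, once in the second variable and once in the first, and then to match the two derivatives by feeding a well-chosen direction into the $b$-derivative. Write $w = w(\eta,b) := b + \eta(h(\eta,b))$, so that \eqref{eq:8.1} reads $h(\eta,b) = h_0(w(\eta,b))$, and let $h_0'(w)\colon\mathcal B\to\mathcal B$ denote the Fr\'echet derivative of $h_0$ at $w$. The object that controls everything is the bounded linear operator
\[
\Theta \;=\; \Theta(\eta,b) \;:=\; \mathrm{id}_{\mathcal B} - h_0'(w)\circ\eta \;:\; \mathcal B \to \mathcal B .
\]
I will show that both $\frac{\partial h}{\partial b}\bigl(\rho(h(\eta,b))\bigr)$ and $\frac{\partial h}{\partial\eta}(\rho)$ are fixed points of one and the same affine map $z\mapsto h_0'(w)\bigl(\rho(h(\eta,b)) + \eta(z)\bigr)$, whose linear part is $h_0'(w)\circ\eta$; uniqueness of that fixed point (equivalently, invertibility of $\Theta$) then forces the two to coincide, which is exactly the asserted identity.

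\textbf{Differentiating in $b$ and in $\eta$.} Since $\eta$ is linear and independent of $b$, the chain rule applied to $h(\eta,b)=h_0(w(\eta,b))$ in the Fr\'echet sense, evaluated in a direction $k\in\mathcal B$, gives $\frac{\partial w}{\partial b}(k) = k + \eta\bigl(\frac{\partial h}{\partial b}(k)\bigr)$ and hence
\[
\frac{\partial h}{\partial b}(k) \;=\; h_0'(w)\!\left(k + \eta\Bigl(\tfrac{\partial h}{\partial b}(k)\Bigr)\right),
\qquad\text{i.e.}\qquad
\Theta\!\left(\tfrac{\partial h}{\partial b}(k)\right) \;=\; h_0'(w)(k).
\]
Likewise, taking the G\^ateaux derivative in $\eta$ in the direction $\rho$ (legitimate because $\eta\in\mathrm{Int}(\mathcal{CP}(\mathcal B))$, so $\eta+t\rho$ stays in the domain for $t$ near $0$), and using $\frac{\partial}{\partial\eta}\bigl(\eta(h(\eta,b))\bigr)(\rho) = \rho\bigl(h(\eta,b)\bigr) + \eta\bigl(\frac{\partial h}{\partial\eta}(\rho)\bigr)$, one obtains
\[
\frac{\partial h}{\partial\eta}(\rho) \;=\; h_0'(w)\!\left(\rho\bigl(h(\eta,b)\bigr) + \eta\Bigl(\tfrac{\partial h}{\partial\eta}(\rho)\Bigr)\right),
\qquad\text{i.e.}\qquad
\Theta\!\left(\tfrac{\partial h}{\partial\eta}(\rho)\right) \;=\; h_0'(w)\bigl(\rho(h(\eta,b))\bigr).
\]

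\textbf{Matching and conclusion.} Now specialize the first identity to the direction $k = \rho\bigl(h(\eta,b)\bigr)\in\mathcal B$. Its right-hand side becomes $h_0'(w)\bigl(\rho(h(\eta,b))\bigr)$, which is precisely the right-hand side of the $\eta$-identity; therefore
\[
\Theta\!\left(\tfrac{\partial h}{\partial b}\bigl(\rho(h(\eta,b))\bigr)\right)
\;=\; \Theta\!\left(\tfrac{\partial h}{\partial\eta}(\rho)\right).
\]
Applying $\Theta^{-1}$ gives $\frac{\partial h}{\partial\eta}(\rho) = \frac{\partial h}{\partial b}\bigl(\rho(h(\eta,b))\bigr)$, which is the stated equation. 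The one genuinely substantive point—and the main obstacle—is the invertibility of $\Theta = \mathrm{id}-h_0'(w)\circ\eta$ on all of $\{\,\Im b>0\,\}$ (for $\eta$ in the interior of the cone). This is not a formal consequence of differentiating: it is the same nondegeneracy that makes \eqref{eq:8.1} uniquely solvable for $h$ in the first place, and it is supplied by the operator-valued analytic/subordination machinery (the maps in play are analytic self-maps of $\{b\in\mathcal B\colon\Im b>0\}$, on which $h_0'(w)\circ\eta$ is a strict contraction in the relevant Carath\'eodory sense), so the affine map $z\mapsto h_0'(w)(\rho(h(\eta,b))+\eta(z))$ has a unique fixed point. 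Everything else is the one-line cancellation above. One also checks along the way that $h$ is indeed Fr\'echet-differentiable in $b$ and G\^ateaux-differentiable in $\eta$—but this is part of the running hypotheses on $h$, and in any case follows from the analytic implicit function theorem applied to $F(x;\eta,b):=x-h_0(b+\eta(x))$, whose partial differential in $x$ at a solution is exactly $\Theta$.
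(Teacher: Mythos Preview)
Your proof is correct and follows essentially the same approach as the paper: differentiate the fixed-point equation \eqref{eq:8.1} implicitly in each variable via the chain rule, then match the two resulting expressions using invertibility of the linearization. The paper organizes this slightly differently---it works first with the auxiliary function $\omega(\eta,b)=b+\eta(h(\eta,b))$ and the operator $\mathrm{Id}_{\mathcal B}-\eta\circ h_0'(\omega)$, derives the Burgers-type equation for $\omega$, and only at the end passes back to $h$ by cancelling an $\eta$ (which is why the paper explicitly invokes invertibility of $\eta$). Your choice to work directly with $h$ and with $\Theta=\mathrm{id}-h_0'(w)\circ\eta$ is a small but genuine streamlining: it bypasses that final cancellation and so does not need $\eta$ to be invertible as a linear map. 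As for the invertibility of $\Theta$ itself, the paper justifies it by exhibiting $\partial\omega/\partial b$ as a right inverse of $\mathrm{Id}-\eta\circ h_0'(\omega)$ (and invertibility of $\mathrm{Id}-AB$ is equivalent to that of $\mathrm{Id}-BA$), while you appeal to the analytic implicit function theorem; both treatments are at the same level of rigor.
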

\begin{proof}
The proof simply consists in applying the corresponding definitions
and the ``chain rule.''
Let $\eta$ and $\rho$ be as above.
Strictly for convenience, we shall write $\omega=\omega(\eta,b)=
b+\eta(h(\eta,b))$ and express the above in terms of $\omega$
as
$$
\omega(\eta,b)=b+\eta(h_0(\omega(\eta,b))),\quad \eta\in\mathcal{CP}(\mathcal B),
\Im b>0.
$$

Then
\begin{equation}\label{gateaux}
\lim_{t\to0}\frac{\omega(\eta+t\rho,b)-\omega(\eta,b)}{t}=
\lim_{t\to0}\frac{
(\eta+t\rho)(h_0(\omega(\eta+t\rho,b)))-\eta(h_0(\omega(\eta,b)))}{t}.
\end{equation}
The right hand side is easily seen to be
equal to
\begin{equation}\label{right}
\rho(h_0(\omega(\eta,b)))+
(\eta\circ h_0'(\omega(\eta,b)))\left(\lim_{t\to0}\frac{\omega(\eta+t\rho,b)-\omega(\eta,b)}{t}\right)
.
\end{equation}
Fr\'echet differentiating in the variable $b$, we obtain
\begin{equation}\label{frechet}
\frac{\partial\omega(\eta,b)}{\partial b}={\rm Id}_\mathcal B+
\eta\circ h_0'(\omega(\eta,b))\circ
\frac{\partial\omega(\eta,b)}{\partial b},
\end{equation}
where the above is an equality of linear endomorphisms of $\mathcal B$.
$ $From \eqref{gateaux} and \eqref{right} we easily obtain
\begin{equation}\label{eq:8.11}
\lim_{t\to0}\frac{\omega(\eta+t\rho,b)-\omega(\eta,b)}{t}
=\left({\rm Id}_\mathcal B-\eta\circ h_0'(\omega(\eta,b))\right)^{-1}
(\rho(h_0(\omega(\eta,b)))),
\end{equation}
while \eqref{frechet} assures us that the linear operator
${\rm Id}_\mathcal B-\eta\circ h_0'(\omega(\eta,b))$ is
indeed invertible, as
\begin{equation}\label{eq:8.12}
\left({\rm Id}_\mathcal B-\eta\circ h_0'(\omega(\eta,b))\right)
\circ\frac{\partial\omega(\eta,b)}{\partial b}={\rm Id}_\mathcal B.
\end{equation}
Combining \eqref{eq:8.11} and \eqref{eq:8.12} provides us with the
differential equation satisfied by $\omega$, which is of interest in
its own right:
\begin{equation}\label{omega}
\frac{\partial\omega(\eta,b)}{\partial\eta}(\rho)=
\frac{\partial\omega(\eta,b)}{\partial b}(\rho(h_0(\omega(\eta,b)))),
\quad \Im b>0,\eta\in\mathcal{CP}(\mathcal B)\textrm{ invertible, }
\rho\in\mathcal{CP}(\mathcal B).
\end{equation}
We recall the definition of $\omega$ and note that $h(\eta,b)=h_0(\omega(\eta,b))$ in order to obtain
$$
\rho(h(\eta,b))+\eta\left(\frac{\partial h(\eta,b)}{\partial\eta}(
\rho)\right)=\rho(h(\eta,b))+\left(\eta\circ\frac{\partial h(\eta,b)}{\partial b}\right)(\rho(h(\eta,b))),
$$and we conclude by the invertibility of
$\eta$.
\end{proof}

\begin{remark}
Let us justify why maps satisfying the conditions of the above
proposition are important for our paper.

A bit of review: denoting $G_\mu(b)=
b^{-1}+b^{-1}M_\mu(b^{-1})b^{-1}$ (recall Equation \eqref{eq:2.10}),
we observe that
$G_\mu(b)=\mu\left[(b-\mathcal X)^{-1}\right]$ is an extension
of $G_\mu$ to elements $b\in\mathcal B,\Im b>0.$ As noted
in \cite{V2000}, $G_\mu(b)$ is invertible in $\mathcal B$ whenever
$\Im b>0$ and moreover, $\Im(G_\mu(b)^{-1})\ge\Im b$.
We also record here the fact that, as shown in \cite{V1995}, one has
\begin{equation}   \label{functional}
\mu\left[(1+bR_\mu(b)-b\mathcal X)^{-1}\right]=1,\quad
\|b\|\textrm{ small}.
\end{equation}

Now, let us denote $h_\mu(b) := G_\mu(b)^{-1}-b,$ $\Im b>0$. It
follows easily from the definition of $B_\mu$ that
$B_\mu(b)=-h_\mu(b^{-1})$.
Using the definition of the transformation $\mathbb B_\alpha$,
we note that $h_{\mathbb B_\alpha(\mu)}(b)=
(\iB +\alpha)^{-1}h_{\mu^{\boxplus(\iB +\alpha)}}(b)=
h_\mu(b+\alpha(\iB +\alpha)^{-1} h_{\mu^{\boxplus(\iB +\alpha)}}(b))
=h_\mu(b+\alpha h_{\mathbb B_\alpha(\mu)}(b)).$ (We have used
the equation \eqref{eqn:4.92} for the first equality,
equation \eqref{eqn:4.91}, equation \eqref{functional}, as well as
analytic continuation for the second, and direct substitution from
the first for the third equality.) Thus, taking
$h(\eta,b)=h_{\mathbb B_\eta(\mu)}(b)$, provides us with an
example of a map satisfying the conditions of the above proposition.
\end{remark}

\begin{theorem} \label{thm:8.3}
Assume that $X=X^*$ and $S$ are free over $\mathcal B$ and
$S$ is a $\mathcal B$-valued centered semicircular of invertible
variance $\eta.$ If we denote $G(\eta,b)=
\mu_{X+S}\left[(b-\mathcal X)^{-1}\right]$, $\Im b>0$, then
$$
\frac{\partial G(\eta,b)}{\partial\eta}(\rho)+
\frac{\partial G(\eta,b)}{\partial b}(\rho(G(\eta,b)))=0,
$$
where $\rho\in\mathcal{CP}(\mathcal B)$, $\eta\in{\rm Int}(
\mathcal{CP}(\mathcal B))$, and $\Im b>0$. The derivative
with respect to $b$ is in the Fr\'echet sense and the derivative
with respect to $\eta$ is taken in the G\^{a}teaux
sense.
\end{theorem}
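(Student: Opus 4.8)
The plan is to deduce the equation from Proposition~\ref{prop:8.1} by transporting the $\cB$-valued free Brownian motion to the semigroup $\{\bB_\eta\}$ via the bijection $\Phi$ of Definition~\ref{def:6.8}. Write $\mu:=\mu_X\in\Sigma(\cB)$ for the distribution of $X$; since $X$ and $S$ are free over $\cB$ and $S$ has distribution $\gamma_\eta$, we have $\mu_{X+S}=\mu\boxplus\gamma_\eta$, so that $G(\eta,b)=G_{\mu\boxplus\gamma_\eta}(b)$, where $G_\nu(b)=\nu[(b-\cX)^{-1}]$ is the generalized resolvent (defined for $\|b^{-1}\|$ small by the Neumann series, and continued analytically to $\{\Im b>0\}$).

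The first step is to record the elementary identity
\[
G_\nu(b)=-h_{\Phi[\nu]}(b),\qquad \nu\in\Sigma(\cB),\ \Im b>0,
\]
where, following the remark after Proposition~\ref{prop:8.1}, $h_{\sigma}(b):=G_\sigma(b)^{-1}-b$. For $\|b^{-1}\|$ small this is a termwise comparison of series: expanding $(b-\cX)^{-1}=\sum_{k\ge0}b^{-1}(\cX b^{-1})^k$ and applying $\nu$ gives $G_\nu(b)=\sum_{n\ge2}\nu[b^{-1}(\cX b^{-1})^{n-2}]$, which by the formula for $B_{\Phi[\nu]}^{[n]}$ in Definition~\ref{def:6.8} is exactly $B_{\Phi[\nu]}(b^{-1})$; and $B_{\Phi[\nu]}(b^{-1})=-h_{\Phi[\nu]}(b)$ by the relation $B_\sigma(c)=-h_\sigma(c^{-1})$ recalled in that same remark. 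Both sides are analytic on $\{\Im b>0\}$ — here one uses that $\Phi[\nu]\in\Sigma(\cB)$ (cf. Corollary~\ref{cor:7.11}), so that $G_{\Phi[\nu]}(b)$ is invertible with $\Im(G_{\Phi[\nu]}(b)^{-1})\ge\Im b$ there — so the identity propagates by analytic continuation.

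Next I would invoke Theorem~\ref{thm:7.11} with $\beta=\mu$, which gives $\Phi[\mu\boxplus\gamma_\eta]=\bB_\eta[\Phi[\mu]]$; since $\eta$ is completely positive, $\bB_\eta[\Phi[\mu]]\in\Sigma(\cB)$ by Theorem~\ref{thm:7.8}. Consequently the remark following Proposition~\ref{prop:8.1} applies with its ``$\mu$'' taken to be $\Phi[\mu]$: the map $h(\eta,b):=h_{\bB_\eta[\Phi[\mu]]}(b)$ is analytic in $b$, sends $\cCP(\cB)\times\{\Im b>0\}$ into $\{\Im b>0\}$, and satisfies $h(\eta,b)=h_0(b+\eta(h(\eta,b)))$ with $h_0=h_{\Phi[\mu]}$ analytic on $\{\Im b>0\}$. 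By the identity of the previous paragraph (applied to $\nu=\mu\boxplus\gamma_\eta$) together with Theorem~\ref{thm:7.11}, we get $h(\eta,b)=h_{\Phi[\mu\boxplus\gamma_\eta]}(b)=-G_{\mu\boxplus\gamma_\eta}(b)=-G(\eta,b)$. Thus $-G(\eta,\cdot)$ satisfies the hypotheses of Proposition~\ref{prop:8.1}, which yields
\[
\frac{\partial h(\eta,b)}{\partial\eta}(\rho)-\frac{\partial h(\eta,b)}{\partial b}\bigl(\rho(h(\eta,b))\bigr)=0,\qquad \rho\in\cCP(\cB),\ \eta\in\Int(\cCP(\cB)),\ \Im b>0.
\]

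It remains to substitute $h(\eta,b)=-G(\eta,b)$ and collect signs: using $\bC$-linearity of $\rho$, of the G\^ateaux derivative in $\eta$, and of the Fr\'echet derivative in $b$, the first term becomes $-\frac{\partial G}{\partial\eta}(\rho)$, while the second becomes $\bigl(-\frac{\partial G}{\partial b}\bigr)\bigl(-\rho(G(\eta,b))\bigr)=\frac{\partial G}{\partial b}\bigl(\rho(G(\eta,b))\bigr)$, so the two sign flips leave the second term with a net minus; hence the displayed identity rearranges to $\frac{\partial G}{\partial\eta}(\rho)+\frac{\partial G}{\partial b}\bigl(\rho(G(\eta,b))\bigr)=0$, which is the assertion. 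The main obstacle — and the step most prone to slips — is the first one: establishing $G_\nu=-h_{\Phi[\nu]}$ cleanly and pinning down the domains (convergence of the resolvent/moment series near $0$, invertibility of $G_{\Phi[\nu]}$ on the operatorial upper half-plane, and the verification that $h(\eta,\cdot)$ genuinely lands in that half-plane so that Proposition~\ref{prop:8.1} is applicable). Once that identity is in hand, the passage to the Burgers equation is purely formal.
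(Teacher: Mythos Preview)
Your proof is correct and follows the same overall strategy as the paper: show that $h(\eta,b):=-G(\eta,b)$ satisfies the functional equation~\eqref{eq:8.1} of Proposition~\ref{prop:8.1} with $h_0=-G_{\mu_X}$, and then read off the Burgers equation. The only difference is in how that functional equation is verified. You go through the identity $G_\nu=-h_{\Phi[\nu]}$, Theorem~\ref{thm:7.11}, and the example worked out in the remark after Proposition~\ref{prop:8.1} (so that $h=h_{\bB_\eta[\Phi[\mu]]}$ falls directly under that remark). The paper instead bypasses $\Phi$ entirely in its ``Indeed'' paragraph: from $R_{\mu_{X+S}}=R_{\mu_X}+\eta$ it obtains $b=\eta(G_{\mu_{X+S}}(b))+G_{\mu_X}^{-1}(G_{\mu_{X+S}}(b))$ and hence $G_{\mu_{X+S}}(b)=G_{\mu_X}(b-\eta(G_{\mu_{X+S}}(b)))$, which is~\eqref{eq:8.1} for $h_0=-G_{\mu_X}$. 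Your route makes the role of Theorem~\ref{thm:7.11} (which the paper cites) explicit; the paper's route is quicker but leaves that citation somewhat implicit. One small remark: your verbal account of the sign bookkeeping at the end is a bit tangled (the ``two sign flips'' in the second term cancel, and it is the original minus in $\partial_\eta h-\partial_b h(\rho(h))$ that supplies the minus), though your final displayed equation is correct.
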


\begin{proof}
This is an immediate consequence of Proposition \ref{prop:8.1},
Theorem \ref{thm:7.11} and the above remarks. Indeed, as we know
that the $R$-transform of $S$ is $R_{\mu_S}(b)=\eta(b)$, it follows
that
$$
R_{\mu_{S+X}}(b)=R_{\mu_X}(b)+R_{\mu_S}(b)=R_{\mu_X}(b)+\eta(b),\quad
\|b\|\textrm{ small}.
$$
After adding $b^{-1}$ to both sides of the above equation, the definition of
the $R$-transform in terms of $G_{\mu_X}(b)=
\mu_X\left[(b-\mathcal X)^{-1}\right]$ allows us to re-write it
as $b=\eta\left(G_{\mu_{X+S}}(b)\right)+G_{\mu_X}^{-1}(G_{\mu_{X+S}}
(b)).$ This equation holds when $\Im b>0$ and $\|b^{-1}\|$ is small
enough. Moving $\eta\left(G_{\mu_{X+S}}(b)\right)$ to the left,
composing with $G_{\mu_X}$ on the left and applying analytic
continuation allows us to find the condition of Proposition \ref{prop:8.1} satisfied by $h_0=-G_{\mu_X}$ and $h(\eta,b)=-G_{\mu_{X+S}}
(b)$, for all $b$ with strictly positive imaginary part.
\end{proof}

\subsection{Analytic subordination for $G_{\mu_X^{\boxplus\alpha}}$}

Given $\mu\in\Sigma(\mathbb C)$, an observation important in the
study of the semigroup $\{\mu^{\boxplus t}\colon t\ge1\}$ was
that its Cauchy-Stieltjes transform satisfies an analytic subordination
property in the sense of Littlewood: for each $t\ge1$ there exists
an analytic self-map $\omega_t$ of the complex upper half-plane
so that $G_\mu\circ\omega_t=G_{\mu^{\boxplus t}}$, as shown in
\cite{BB2004,BB2005,C}. We shall present our result in terms of
analytic functions on $\{b\in\mathcal B\colon\Im b>0\}$ as in
\cite{V1995,HRS07}, but it is fairly straightforward to see that all
functions involved have fully matricial extension in the sense of
\cite{V2000}.

\begin{theorem}   \label{thm:8.4}
For any $\mu\in\Sigma^0(\mathcal B)$ and $\alpha\in\mathcal{CP(B)}$ so that $\alpha-1$ is still completely positive there exists an analytic
function $\omega_\alpha\colon\{b\in\mathcal B\colon\Im b>0\}\to
\{b\in\mathcal B\colon\Im b>0\}$ so that $G_\mu\circ\omega_\alpha=
G_{\mu^{\boxplus\alpha}}$. The function $\omega_\alpha$ satisfies
the functional equation
\begin{equation}\label{eq:8.8}
\omega_\alpha(b)=b+(\alpha-1)h_\mu(\omega_\alpha(b)),\quad\Im b>0.
\end{equation}
\end{theorem}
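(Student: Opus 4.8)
The statement to prove is Theorem~\ref{thm:8.4}: for $\mu \in \Sigma^0(\cB)$ and $\alpha \in \cCP(\cB)$ with $\alpha - 1$ completely positive, there is an analytic self-map $\omega_\alpha$ of $\{b \in \cB \colon \Im b > 0\}$ with $G_\mu \circ \omega_\alpha = G_{\mu^{\boxplus \alpha}}$, satisfying \eqref{eq:8.8}. The plan is to route everything through the transformation $\bB_{\alpha - 1}$ and the relation between $h_\mu$ and the $B$-transform recorded in the remark preceding Theorem~\ref{thm:8.3}. Write $\alpha = 1 + \beta$ with $\beta := \alpha - 1 \in \cCP(\cB)$. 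First I would recall from that remark the identity
\[
h_{\bB_\beta(\mu)}(b) = h_\mu\bigl(b + \beta\, h_{\bB_\beta(\mu)}(b)\bigr), \quad \Im b > 0,
\]
where $h_\mu(b) := G_\mu(b)^{-1} - b$ and the derivation used \eqref{eqn:4.91}, \eqref{eqn:4.92}, \eqref{functional} plus analytic continuation. The candidate subordination function is then
\[
\omega_\alpha(b) := b + \beta\, h_{\bB_\beta(\mu)}(b) = b + (\alpha - 1)\, h_{\mu^{\boxplus \alpha}}\!\bigl/(1+\beta)\ \text{-type expression};
\]
more cleanly, using Proposition~\ref{prop:6.6} in the form $(\bB_\beta(\mu))^{\uplus(1+\beta)} = \mu^{\boxplus(1+\beta)} = \mu^{\boxplus \alpha}$ together with the scaling $B_{\nu^{\uplus \alpha}} = \alpha \circ B_\nu$ and $B_\nu(b) = -h_\nu(b^{-1})$, one gets $h_{\mu^{\boxplus \alpha}}(b) = (1+\beta)\bigl(h_{\bB_\beta(\mu)}\bigr)$ at the level of $B$-series, so that $\beta\, h_{\bB_\beta(\mu)}(b)$ can be rewritten to produce exactly \eqref{eq:8.8}. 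I would set $\omega_\alpha(b) := b + (\alpha-1) h_\mu(\omega_\alpha(b))$ as the defining fixed-point equation and verify that $h_\mu(\omega_\alpha(b)) = h_{\mu^{\boxplus\alpha}}(b)$, whence $G_{\mu^{\boxplus\alpha}}(b) = (b + h_{\mu^{\boxplus\alpha}}(b))^{-1} = (\omega_\alpha(b))^{-1}\cdot(\text{correction})$; being careful, $G_\nu(b)^{-1} = b + h_\nu(b)$, so $G_{\mu^{\boxplus\alpha}}(b) = (b + h_{\mu^{\boxplus\alpha}}(b))^{-1} = (\omega_\alpha(b) + h_\mu(\omega_\alpha(b)) - (\alpha-1)h_\mu(\omega_\alpha(b)) + \ldots)$ — the algebra collapses because $\omega_\alpha(b) = b + (\alpha-1)h_\mu(\omega_\alpha(b))$ gives $b + h_{\mu^{\boxplus\alpha}}(b) = \omega_\alpha(b) + h_\mu(\omega_\alpha(b))$ once $h_{\mu^{\boxplus\alpha}}(b) = \alpha(h_\mu(\omega_\alpha(b)))$ is established from the $B$-transform scaling, and then $G_{\mu^{\boxplus\alpha}}(b) = G_\mu(\omega_\alpha(b))$.

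\textbf{Key steps in order.} (1) Reduce to showing existence of an analytic self-map $\omega_\alpha$ of the operator upper half-plane solving the fixed-point equation \eqref{eq:8.8}, and separately showing that this $\omega_\alpha$ intertwines the Cauchy transforms. (2) For the intertwining, I would first work with $\|b^{-1}\|$ small, where everything is given by convergent $\cB$-series: there $B_{\mu^{\boxplus\alpha}} = \Reta(\alpha \circ R_\mu)$ via $\Reta(R_\mu) = B_\mu$ (Proposition~\ref{prop:5.4}) and Definition~\ref{def:4.9}, and the functional equation \eqref{functional} of Voiculescu translates the $R$-transform identity $R_{\mu^{\boxplus\alpha}} = \alpha \circ R_\mu$ into the subordination relation for $G$; this is the $\cB$-valued analogue of the standard computation. (3) Then extend from the small-norm region to all of $\{\Im b > 0\}$ by analytic continuation, using that $G_\mu$ and $G_{\mu^{\boxplus\alpha}}$ are both analytic there (note $\mu, \mu^{\boxplus\alpha} \in \Sigma^0(\cB)$ by Theorem~\ref{Thm:alpha-geq-1}, so they come from bounded operators and the resolvents are genuinely analytic on the half-plane). (4) Verify the mapping property: $\Im \omega_\alpha(b) > 0$. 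Here I would use that $G_\nu$ maps the upper half-plane into the \emph{lower} half-plane and $\Im(G_\nu(b)^{-1}) \geq \Im b$ (cited from \cite{V2000} in the remark before Theorem~\ref{thm:8.3}), so $\Im h_\nu(b) = \Im(G_\nu(b)^{-1} - b) \geq 0$; hence $h_\mu(\omega_\alpha(b))$ has nonnegative imaginary part, and since $\alpha - 1$ is completely positive (in particular positive), $(\alpha-1)h_\mu(\omega_\alpha(b))$ has nonnegative imaginary part, so $\omega_\alpha(b) = b + (\alpha-1)h_\mu(\omega_\alpha(b))$ has $\Im \omega_\alpha(b) \geq \Im b > 0$. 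This also shows $\omega_\alpha$ is actually an analytic self-map with a lower bound on the imaginary part, which is what makes the fixed-point construction work.

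\textbf{Existence of $\omega_\alpha$.} For step (1) I would construct $\omega_\alpha$ either as $\omega_\alpha(b) = b + (\alpha-1)h_\mu(\cdot)$ read off directly from the already-constructed distribution $\bB_{\alpha-1}(\mu) = \mu^{\boxplus\alpha}{}^{\uplus(\alpha^{-1})}$-type object (when $\alpha$ is invertible this is literally Proposition~\ref{prop:6.6}, and in general one still has $h_{\bB_{\alpha-1}(\mu)}$ well-defined since $\bB_{\alpha-1}(\mu) \in \Sigma(\cB)$ by Theorem~\ref{thm:7.8}), setting $\omega_\alpha(b) := b + (\alpha-1)h_{\bB_{\alpha-1}(\mu)}(b)$ and quoting the identity from the remark before Theorem~\ref{thm:8.3} that this equals $b + (\alpha-1)h_\mu(\omega_\alpha(b))$; or, alternatively, by a direct Earle--Hamilton / contraction fixed-point argument for the map $w \mapsto b + (\alpha-1)h_\mu(w)$ on a suitable subdomain of the operator half-plane. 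The cleaner route is the former, since it reuses the machinery already built. Analyticity of $\omega_\alpha$ then follows from analyticity of $G_{\bB_{\alpha-1}(\mu)}$ on the half-plane.

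\textbf{Main obstacle.} I expect the main difficulty to be the passage from the formal/small-norm identities (where $B$-series and $R$-series converge and the combinatorial Proposition~\ref{prop:5.4} and Definition~\ref{def:4.9} apply verbatim) to honest analytic identities on the whole set $\{b \in \cB \colon \Im b > 0\}$ — i.e.\ justifying the analytic continuation and making sure $\omega_\alpha$ is genuinely well-defined and single-valued as a self-map there, rather than just a formal power series. The tools are Voiculescu's functional equation \eqref{functional}, the bound $\Im(G_\mu(b)^{-1}) \geq \Im b$, and the fact that $\mu^{\boxplus\alpha} \in \Sigma^0(\cB)$ so its Cauchy transform is analytic on the half-plane; assembling these into a clean continuation argument (and checking that the continued $\omega_\alpha$ still satisfies \eqref{eq:8.8} pointwise, not just near the boundary) is the technical heart. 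Everything else — the identity $R_{\mu^{\boxplus\alpha}} = \alpha \circ R_\mu$, the translation to $G$, the imaginary-part bookkeeping — is routine given the earlier sections.
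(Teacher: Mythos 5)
Your outline is essentially sound, but it reaches the subordination function by a genuinely different route than the paper, so let me compare. The paper's proof uses no input from the positivity results of Section~7: it constructs $\omega_\alpha$ intrinsically, by applying the operator-valued Earle--Hamilton-type fixed point theorem of \cite{HRS07} (the alternative you mention only as a fallback) to the map $w \mapsto b + (\alpha-1)h_\mu(w)$, after noting $\Im h_\mu \geq 0$ and bounding $h_\mu$ uniformly on $\set{w \colon \Im w > \Im b/2}$ via the resolvent-type representation of $h_\mu$ through a completely positive map from \cite{PV2010}; analyticity of $\omega_\alpha$ comes from its being a local uniform limit of the iterates. The identity $G_\mu \circ \omega_\alpha = G_{\mu^{\boxplus\alpha}}$ is then verified exactly as in your step (2): one checks $\omega_\alpha(b^{-1} + \alpha R_\mu(b)) = b^{-1} + R_\mu(b)$ for invertible $b$ of small norm, which reduces to the definition of the $R$-transform, and continues analytically. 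Your preferred construction instead sets $\omega_\alpha(b) := b + (\alpha-1)h_{\bB_{\alpha-1}(\mu)}(b)$, importing Theorem~\ref{thm:7.8}, Proposition~\ref{prop:6.6}, and the identity $h_{\bB_{\alpha-1}(\mu)}(b) = h_\mu\bigl(b + (\alpha-1)h_{\bB_{\alpha-1}(\mu)}(b)\bigr)$ from the remark preceding Theorem~\ref{thm:8.3}; granting those, your algebra $G_\mu(\omega_\alpha(b))^{-1} = b + \alpha\bigl(h_{\bB_{\alpha-1}(\mu)}(b)\bigr) = G_{\mu^{\boxplus\alpha}}(b)^{-1}$ is correct, and your imaginary-part bookkeeping for the self-map property matches the paper's. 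What the paper's route buys is precisely what the remark following Theorem~\ref{thm:8.4} emphasizes: the existence of the subordination function is obtained with no tool beyond analytic function theory, independently of the operator models (and in principle independently of the existence of $\mu^{\boxplus\alpha}$). What your route buys is brevity given Sections~6--7, but it has two points to shore up: to treat $h_{\bB_{\alpha-1}(\mu)}$ as a genuine analytic function on all of $\set{b \colon \Im b > 0}$ you need more than $\bB_{\alpha-1}(\mu) \in \Sigma(\cB)$, which is all Theorem~\ref{thm:7.8} asserts --- you need $\bB_{\alpha-1}(\mu) \in \Sigma^0(\cB)$ or a substitute giving an honest Cauchy--Stieltjes transform on the half-plane (fillable, e.g.\ since the Fock-module operator in the proof of Theorem~\ref{thm:7.8} is bounded when $\mu \in \Sigma^0(\cB)$ and $\alpha$ is bounded, but not available off the shelf); and the remark you lean on implicitly inverts $1+\alpha$ in its derivation, which is not automatic even when $\alpha - 1$ is completely positive, so a fully rigorous version of your route would rederive that functional equation, say via Proposition~\ref{prop:6.5}, rather than quote the remark verbatim.
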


\begin{proof}
Let $f\colon\{b\in\mathcal B\colon\Im b>0\}\times\{b\in\mathcal B\colon\Im b>0\}\to\{b\in\mathcal B\colon\Im b>0\}$ be given by $f(b,w)=
b+(\alpha-1)h_\mu(w)$. As shown in \cite[Remark 2.5]{BPV2010},
$\Im h_\mu(w)\ge0$ whenever $\Im w>0$. Moreover, as it is known from
\cite{PV2010} that $h_\mu(w)=\sigma[(\mathcal X-b)^{-1}]-\mu[\mathcal
X]$ for a completely positive map $\sigma$ of norm equal to the
variance of $\mu$, on the set
$\{w\in\mathcal B\colon\Im w>\Im b/2\}$,
$h_\mu$ is uniformly bounded by $M_b=2\|\mu[\mathcal X]\|+4\|\alpha\|_{
\rm cp}\cdot
\|\mu[\mathcal X\cdot\mathcal X]-\mu[\mathcal X]\cdot\mu[\mathcal X]
\|_{\rm cp}\cdot\|[\Im b]^{-1}\|+2.$ Thus, the map $f(b,\cdot)$ maps
the set $\{w\in\mathcal B\colon\Im w\ge\Im b/2,\|w\|\le2M_b\}$
inside its interior, and so, by \cite[Theorem 3.1]{HRS07}, there
exists a unique fixed point of this map in the interior of this set.
Thus, $\omega_\alpha$ is indeed well-defined. Moreover,
$$
\omega_\alpha(b)=\lim_{n\to\infty}\underbrace{f(b,f(b,\cdots f}_{n
\textrm{ times}}(b,w)\cdots))
$$
for any $w\in M_b$, $\Im b>0$, which means that
$\omega_\alpha$ is locally the uniform limit of
a sequence of maps which are analytic in $b$, and
hence it is analytic itself.

Now, equation \eqref{eq:8.8} is equivalent to the equation
$\alpha\omega_\alpha(b)+(1-\alpha)G_\mu(\omega_\alpha(b))^{-1}=b$,
$\Im b>0$. If
$G_\mu\circ\omega_\alpha=G_{\mu^{\boxplus\alpha}}$ indeed holds,
then we must be able to verify the relation
$\omega_\alpha(b^{-1}+\alpha R_\mu(b))=b^{-1}+R_\mu(b)$;
replacing in the above form of \eqref{eq:8.8} gives
$\alpha(b^{-1}+R_\mu(b))+(1-\alpha)G_\mu(b^{-1}+R_\mu(b))=
b^{-1}+\alpha R_\mu(b)$, a relation trivially true from the definition
of the $R$-transform. All these formulas hold for $b$ invertible
of small enough norm. Analytic continuation allows us to conclude.
\end{proof}

While, like in \cite{BB2005}, we have proved in the above proposition
the existence of the subordination function without any recourse to
any other tool except for analytic function theory, unlike in
\cite{BB2005}, we are not able to conclude from the above the {\em
existence} of $\mu^{\boxplus\alpha}$. The missing ingredient is
a good characterization of maps on the operatorial upper
half-plane which are operator-valued Cauchy-Stieltjes transforms.

\section{Examples}

\begin{example}
For $\lambda \in \cB$ symmetric, we define $\delta_\lambda \in \Sigma^0(\cB)$ by
\[
\delta_\lambda[\cX b_1 \cX \ldots \cX b_n] = \lambda b_1 \lambda \ldots \lambda b_n
\]
or more generally $\mu[P] = P(\lambda)$. Then
\[
\left( \delta_{\lambda} \right)^{\boxplus \alpha} = \left( \delta_{\lambda} \right)^{\uplus \alpha} = \delta_{\alpha[\lambda]}.
\]
\end{example}

\begin{example}
\label{Example:semicircular}
If $\gamma_\eta$ is a centered $\cB$-valued semicircular distribution, then
\[
R_{\gamma_\eta}[\cX b_1 \cX b_2 \ldots b_{n-1} \cX] = \delta_{n, 2} \eta[b_1].
\]
So
\[
\gamma_{\eta} = \left( \gamma_I \right)^{\boxplus \eta}.
\]
\end{example}

In forthcoming work we will describe $\cB$-valued free Meixner distributions, which include the examples above as well as many others.

The following definition generalizes Definition~4.4.1 of \cite{S1998}.

\begin{definition}
\label{Defn:Compound-Poisson}
Let $\nu \in \Sigma(\cB)$, and $\alpha \in \cCP(\cB)$. The generalized free compound Poisson distribution $\pi_{\nu, \alpha}^\boxplus \in \Sigma(\cB)$ is determined by
\[
R_{\pi_{\nu, \alpha}^\boxplus} = (\iB - \alpha) \circ \delta_0 + \alpha \circ \nu,
\]
or more precisely by
\[
R_{\pi_{\nu, \alpha}^\boxplus}^{[n]}(b_1, b_2, \ldots, b_{n-1}) = \alpha \left[ M_\nu^{[n]}(b_1, b_2, \ldots, b_{n-1}) \right].
\]
In particular, $\pi_{\nu, \alpha}^\boxplus = (\pi_{\nu, I}^\boxplus)^{\boxplus \alpha}$. Similarly, generalized Boolean compound Poisson distributions are determined by
\[
B_{\pi_{\nu, \alpha}^\uplus}^{[n]}(b_1, b_2, \ldots, b_{n-1}) = \alpha \left[ M_\nu^{[n]}(b_1, b_2, \ldots, b_{n-1}) \right].
\]
\end{definition}

\begin{example}
Let $\cB = M_2(\bC)$, $\nu = \delta_1$, and
\[
\alpha[b] = \left(\begin{smallmatrix} 0 & 1 \\ 1 & 0 \end{smallmatrix}\right) b \left(\begin{smallmatrix} 0 & 1 \\ 1 & 0 \end{smallmatrix}\right).
\]
Suppose $\pi_{\nu, \alpha}^\boxplus = \pi_{\mu, t}^\boxplus$ for some $\mu \in \Sigma(\cB)$ and $t > 0$. Then
\[
\begin{split}
M_\mu^{[n]}(b_1, b_2, \ldots, b_{n-1})
& = \frac{1}{t} R_{\pi_{\nu, \alpha}^\boxplus}^{[n]}(b_1, b_2, \ldots, b_{n-1}) \\
& = \frac{1}{t} \alpha \left[ M_\nu^{[n]}(b_1, b_2, \ldots, b_{n-1}) \right]
= \frac{1}{t} \alpha[b_1 b_2 \ldots b_{n-1}].
\end{split}
\]
In particular,
\[
\mu[(1 - b \cX)^\ast (1 - b \cX)]
= 1 - b \mu[\cX] - \mu[\cX] b^\ast + \mu[\cX b^\ast b \cX]
= 1 - \frac{1}{t} (b \alpha[1] + \alpha[1] b^\ast) + \frac{1}{t} \alpha[b^\ast b].
\]
So for $\alpha$ as above and $b = \left(\begin{smallmatrix} t & 0 \\ 0 & 0 \end{smallmatrix}\right)$,
\[
\mu[(1 - b \cX)^\ast (1 - b \cX)]
= \left(\begin{smallmatrix}
1 - 2 & 0 \\ 0 & 1 + t
\end{smallmatrix}\right)
\]
is not positive. It follows that such $\mu, t$ do not exist, and so the class of distributions in the preceding definition is wider than Definition~4.4.1 of \cite{S1998}.
\end{example}

We end the paper with an alternative operator model for Boolean compound Poisson distributions.

\begin{example}
Let $\mu$ be a Boolean compound Poisson distribution. Also, let $\alpha : \cB \rightarrow \cB$ be a completely positive map which has the special form $\alpha[b] = (1 + e) b (1 + e^\ast)$ for some $e \in \cB$.

We can choose a noncommutative probability space $(\cM = \cB \oplus \cM_0, \bE, \cB)$ and a
selfadjoint noncommutative random variable $Y\in \cM_0$ satisfying
$$M_Y=B_\mu.$$
By taking a further Boolean product (with amalgamation over $\cB$) we may assume that $\cM_0$ contains an element $Q$ which is Boolean independent from $Y$ over $\cB$ and satisfies
\[
\bE(Q)=e, \qquad \mbox{Var}_Q(b)=b,
\]
where
\[
\mbox{Var}_Q(b)=\bE((1+Q^\ast)b(1+Q))-\bE(1+Q^\ast)b\bE(1+Q).
\]
\end{example}

\begin{theorem}
In the setting of the preceding example, let
\[
T=(1+Q)Y(1+Q^\ast).
\]
Then the distribution of $T$ is $\mu^{\uplus \alpha}$.
\end{theorem}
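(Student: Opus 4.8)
The plan is to identify the distribution $\mu_T$ of $T$ by computing its $B$-transform and verifying that $B_{\mu_T}=\alpha\circ B_\mu$; by Definition~\ref{def:4.9} this is exactly the assertion $\mu_T=\mu^{\uplus\alpha}$. Note first that $T=T^\ast$ since $Y=Y^\ast$, so $\mu_T\in\Sigma(\cB)$ is well defined. By Notation~\ref{def:4.5} and Definition~\ref{def:4.7}, proving $B_{\mu_T}=\alpha\circ B_\mu$ is the same as proving the moment identities
\[
\mu_T[\cX b_1\cX\cdots\cX b_{n-1}\cX]=\sum_{\pi\in\Int(n)}\bigl(\alpha\circ B_\mu\bigr)^{[\pi]}(b_1,\dots,b_{n-1}),\qquad n\ge1,
\]
where the right-hand side is expanded with the interval-partition formula of Remark~\ref{rem:3.7}; here $(\alpha\circ B_\mu)^{[m]}=\alpha\circ B_\mu^{[m]}$ with $\alpha[b]=(1+e)b(1+e^\ast)$, and, by the choice of $Y$, $B_\mu^{[m]}(c_1,\dots,c_{m-1})=M_Y^{[m]}(c_1,\dots,c_{m-1})=\bE[Yc_1Y\cdots Yc_{m-1}Y]$.

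For the left-hand side I would substitute $T=(1+Q)Y(1+Q^\ast)$ and write
\[
\mu_T[\cX b_1\cX\cdots\cX b_{n-1}\cX]=\bE\bigl[R_0\,Y\,R_1\,Y\cdots R_{n-1}\,Y\,R_n\bigr],
\]
with $R_0=1+Q$, $R_n=1+Q^\ast$ and $R_i=(1+Q^\ast)b_i(1+Q)$ for $1\le i\le n-1$. Each $R_i$ decomposes as $(\text{its }\cB\text{-component})+s_i$, where the $\cB$-component is $1$ for $i\in\{0,n\}$ and is $b_i$ for $1\le i\le n-1$ — this uses that $\cM_0$ is a two-sided ideal of $\cM$, so the $\cB$-component of $(1+Q^\ast)b_i(1+Q)$ is exactly $b_i$ — and $s_i$ lies in the $\ast$-subalgebra $\cM_Q\subseteq\cM_0$ generated by $Q$ over $\cB$. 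Expanding over the $2^{n+1}$ binary choices (for each $i$, the $\cB$-component of $R_i$ versus $s_i$), every resulting summand is a strictly alternating product of factors from $\cM_Q$ and from the $\ast$-subalgebra $\cM_Y\subseteq\cM_0$ generated by $Y$ over $\cB$: positions retaining the $\cB$-component merge consecutive $Y$'s into ``$Y$-blocks'' $Yb\,Y\cdots b\,Y\in\cM_Y$, the interior positions carrying $s_i$ contribute a factor in $\cM_Q$, and the two ends contribute $Q$ or $Q^\ast$. Boolean independence of $\cM_Q$ and $\cM_Y$ over $\cB$ then factors $\bE$ of each summand into the product of $\bE$ of the $Y$-blocks — each equal to some $M_Y^{[m]}(\cdot)=B_\mu^{[m]}(\cdot)$ — together with $\bE[s_i]$ for the active interior positions and $\bE[Q]=e$, $\bE[Q^\ast]=e^\ast$ for the ends, where $\bE[s_i]=\bE[R_i]-b_i=(1+e^\ast)b_i(1+e)$ by the hypothesis $\mathrm{Var}_Q(b)=b$.

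The final step is to reorganize the sum. The set of active positions inside $\{1,\dots,n-1\}$ is in bijection (Remark~\ref{rem:3.7}) with an interval partition $\pi\in\Int(n)$ whose blocks are exactly the $Y$-blocks; summing over the two end choices pulls out an overall factor $(1+e)$ on the left and $(1+e^\ast)$ on the right, while each interior active position $i$ inserts $\bE[s_i]=(1+e^\ast)b_i(1+e)$ between the neighbouring $Y$-block expectations. On the other side, inserting $\alpha[m]=(1+e)m(1+e^\ast)$ into the Remark~\ref{rem:3.7} formula for $(\alpha\circ B_\mu)^{[\pi]}$, the factors $(1+e^\ast)$ and $(1+e)$ flanking each interior separator $b_{q_j}$ amalgamate with it into $(1+e^\ast)b_{q_j}(1+e)$, leaving exactly $(1+e)$ and $(1+e^\ast)$ at the two ends; thus the two $\pi$-indexed sums agree termwise, which establishes the moment identities and hence $\mu_T=\mu^{\uplus\alpha}$. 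I expect the main (though essentially routine) obstacle to be the bookkeeping of this last matching, together with confirming that the strict-alternation hypothesis of Boolean independence genuinely holds for every choice of active set — in particular for the boundary factors $R_0,R_n$ and for degenerate singleton $Y$-blocks; once the expansion is organized cleanly the identification is forced.
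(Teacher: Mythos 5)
Your proposal is correct and follows essentially the same route as the paper: you split each factor $(1+Q^\ast)b_i(1+Q)$ into $b_i$ plus an element of the algebra generated by $Q$ over $\cB$, factor the resulting alternating products via Boolean independence, use $\mathrm{Var}_Q(b)=b$ and $\bE[Q]=e$ to evaluate the interior contributions, and match the outcome against the interval-partition expansion of $\EtatoM(\alpha\circ B_\mu)$. The only (cosmetic) deviations are that you also expand the two boundary factors $1+Q$, $1+Q^\ast$ and re-sum them, and that you compare moment series directly with the $\Int(n)$-sum rather than finishing with an explicit M\"obius inversion as the paper does.
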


\begin{proof}
By Boolean independence of $Y$ and $Q$,
\begin{eqnarray*}
M_T^{[1]}&=&\bE(T)\\
         &=&(1+\bE(Q))\bE(Y)(1+\bE(Q)^\ast)\\
         &=&(1+\bE(Q))M_Y^{[1]}(1+\bE(Q)^\ast),
\end{eqnarray*}
so $M_T^{[1]}=\alpha (M_Y^{[1]})$.\\
To compute $M_T^{[n]}(b,\ldots ,b)$ for $n\geq 2$
and $b\in \cB$, we set, $$R_i:=(1+Q^\ast)b_i(1+Q)-b_i = Q^\ast b_i + b_i Q + Q^\ast b_i Q.$$
Note that $R_i \in \cM_0$. Then, since $R_i$ is Boolean independent from $Y$ over $\cB$, one has:
\[
\begin{split}
& M_T^{[n]}(b_1, b_2, \ldots, b_{n-1}) \\
&\qquad =\bE( T b_1 T b_2 \cdots T b_{n-1} T)\\
&\qquad =\bE((1+Q)Y(1+Q^\ast) b_1 (1+Q)\cdots Y(1+Q^\ast) b_{n-1} (1+Q)Y(1+Q^\ast))\\
&\qquad =\bE((1+Q)Y(b_1 + R_1)Y\cdots (b_{n-1} + R_{n-1})Y(1+Q^\ast))\\
&\qquad =\sum _{1\leq m_1 < \ldots < m_p=n} \bE \Bigl((1+Q)(Y b_1 \ldots b_{m_1 - 1} Y) R_{m_1} \\
&\qquad\qquad                              \cdots R_{m_1 + \ldots + m_{p-1}} (Y b_{m_1 + \ldots + m_{p-1} + 1} \ldots b_{n-1} Y)(1+Q^\ast) \Bigr)\\
&\qquad =\sum _{1\leq m_1 < \ldots < m_p=n} \bE (1+Q) \bE(Y b_1 \ldots b_{m_1 - 1} Y) \bE(R_{m_1}) \\
&\qquad\qquad                              \cdots \bE(R_{m_1 + \ldots + m_{p-1}}) \bE(Y b_{m_1 + \ldots + m_{p-1} + 1} \ldots b_{n-1} Y) \bE(1+Q^\ast) \\
&\qquad =\sum _{1\leq m_1 < \ldots < m_p=n} \bE (1+Q) M_Y^{[m_1]}(b_1, \ldots, b_{m_1 - 1}) \bE(R_{m_1}) \\
&\qquad \qquad                              \cdots \bE(R_{m_1 + \ldots + m_{p-1}}) M_Y^{[m_p]}(b_{m_1 + \ldots + m_{p-1} + 1}, \ldots, b_{n-1}) \bE(1+Q^\ast).
\end{split}
\]
Now using
\[
\bE(R_i) = \bE((1+Q^\ast)b_i(1+Q)) - b_i = \mbox{Var}_Q(b_i) + \bE(1+Q^\ast)b_i\bE(1+Q) - b_i = \bE(1+Q^\ast)b_i\bE(1+Q),
\]
we get
\[
\begin{split}
& M_T^{[n]}(b_1, b_2, \ldots, b_{n-1}) \\
&\qquad =\sum _{1\leq m_1 < \ldots < m_p=n} \Bigl( \bE (1+Q) M_Y^{[m_1]}(b_1, \ldots, b_{m_1 - 1}) \bE(1 + Q^\ast) \Bigr) b_{m_1} \\
&\qquad \qquad                              \cdots b_{m_1 + \ldots + m_{p-1}} \Bigl( \bE(1 + Q) M_Y^{[m_p]}(b_{m_1 + \ldots + m_{p-1} + 1}, \ldots, b_{n-1}) \bE(1+Q^\ast) \Bigr).
\end{split}
\]
Using M{\"o}bius inversion and formula \eqref{eqn:4.51}, it follows that
\[
\begin{split}
B_T^{[n]}(b_1, b_2, \ldots, b_{n-1})
& = \bE (1+Q) M_Y^{[n]}(b_1, \ldots, b_{n-1}) \bE (1+Q^\ast) \\
& = (1+e) M_Y^{[n]}(b_1, \ldots, b_{n-1}) (1+e^\ast) \\
& = (1+e) B_\mu^{[n]}(b_1, \ldots, b_{n-1}) (1+e^\ast) \\
& = \alpha(B_\mu^{[n]}(b_1, \ldots, b_{n-1})).
\end{split}
\]
This proves that the distribution of $T$ is $\mu^{\uplus \alpha }$.
\end{proof}

$\ $

$\ $

\textbf{Acknowledgment.} This work was started when the authors
participated in a Research in Teams project (10rit159) at the
Banff International Research Station, in August 2010. The support
of BIRS and its very inspiring environment are gratefully
acknowledged.

$\ $


\providecommand{\bysame}{\leavevmode\hbox to3em{\hrulefill}\thinspace}
\providecommand{\MR}{\relax\ifhmode\unskip\space\fi MR }
\providecommand{\MRhref}[2]{%
  \href{http://www.ams.org/mathscinet-getitem?mr=#1}{#2}
}
\providecommand{\href}[2]{#2}

\end{document}